\newtheorem{theorem}{Theorem}[section]
\newtheorem{proposition}{Proposition}[section]
\newtheorem{lemma}{Lemma}[section]
\newtheorem{cor}{Corollary}[section]
\newtheorem{defi}{Definition}[section]
\newtheorem{prob}{Problem}[section]
\newtheorem*{prop}{Proposition}
\newenvironment{nota}{\medskip\noindent{\sc
Notation}.}{\goodbreak\medskip}
\newenvironment{notas}{\medskip\noindent{\sc
Notations}.}{\goodbreak\medskip}
\newenvironment{remk}{\noindent{\sc
Remark}. }{\goodbreak\vskip10pt}
\newenvironment{remks}{\noindent{\sc
Remarks}. }{\goodbreak\vskip10pt}
\newenvironment{ques}{\medskip\noindent{\sc
Question}. }{\goodbreak\medskip}
\def\cn{{\mathcal N}}
\def\cb{{\mathcal B}}
\def\cs{{\mathcal S}}
\def\ct{{\mathcal T}}
\def\ce{{\mathcal E}}
\def\cc{{\mathcal C}}
\def\ca{{\mathcal A}}
\def\ch{{\mathcal H}}
\def\ck{{\mathcal K}}
\def\cl{{\mathcal L}}
\def\cu{{\mathcal U}}
\def\cv{{\mathcal V}}
\def\R{\mathbb{R}}
\def\Z{\mathbb{Z}}
\def\N{\mathbb{N}}
\def\T{\mathbb{T}}
\def\Leb{\mathrm{Leb}}
\def\d{\delta}
\def\smallskip{\par\vspace{1mm}}
\def\medskip{\par\vspace{2mm}}
\def\bigskip{\par\vspace{3mm}}
\def\thenumber{0}
\def\eq#1{\global\advance\equationcount by 1
   \def\thenumber{\number\equationcount}
                        {$$#1\eqno(\thenumber)$$}}
\tikzset{
xmin/.store in=\xmin, xmin/.default=-1.5, xmin=-1.5,
xmax/.store in=\xmax, xmax/.default=7.5, xmax=7.55,
ymin/.store in=\ymin, ymin/.default=-0.75, ymin=-0.75,
ymax/.store in=\ymax, ymax/.default=3.25, ymax=3.25,
}
\begin{document}

\title[integrable Hamiltonians]{A $C^1$ Arnol'd-Liouville theorem }

\author{Marie-Claude Arnaud$^{\dag}$, Jinxin Xue}

\address{Avignon Universit\'e, Laboratoire de Math\'ematiques d'Avignon 
(EA 2151)\\ F-84018  Avignon, France } 
\email{Marie-Claude.Arnaud@univ-avignon.fr}

\address{University of Chicago, Chicago, IL, US, 60637}
\email{jxue@math.uchicago.edu}

\date{}

\keywords{($C^0$-)Poisson commutativity, Hamiltonian, Arnol'd- Liouville  theorem, foliation, Lagrangian submanifolds, 
generating  functions, symplectic homeomorphisms, complete integrability.}

\subjclass[2010]{37J50, 70H20, 53D12}

\thanks{$\dag$ member of the {\sl Institut universitaire de France.}}
\thanks{supported by ANR-12-BLAN-WKBHJ}

\begin{abstract}  In this paper, we prove a version of Arnol'd-Liouville theorem for $C^1$ commuting Hamiltonians. We show that the Lipschitz regularity of the foliation by invariant Lagrangian tori is crucial to determine the Dynamics on each Lagrangian torus and that the $C^1$ regularity of the foliation by invariant Lagrangian tori is crucial to prove the continuity of Arnol'd-Liouville coordinates. We also explore various notions of $C^0$ and Lipschitz integrability.
\end{abstract}

\maketitle
 \begin{center}
Dedicated to Jean-Christophe Yoccoz
\end{center}
\section{Introduction and Main Results.}\label{SecIntro}

This article elaborates on the following question.

\begin{ques}
{\it If a Hamiltonian system has enough commuting integrals\footnote{These notions are precisely described in the rest of the introduction. }, can we  precisely describe the Hamiltonian Dynamics, even in the case of non $C^2$ integrals?}
\end{ques}
When the integrals are $C^2$, Arnol'd-Liouville Theorem (see \cite{Arno1}) gives such a dynamical description. The proof in \cite{Arno1} relies on the Abelian group action generated by the commuting Hamiltonian flows. Unfortunately, the result is not valid for $C^1$ integrals~: in this case, there is {\it a priori}  no Hamiltonian flow that we can associate to the $C^1$ integrals, the Abelian group action does not exist and so the proof in \cite{Arno1} does not work.

Note that in some case,    a $C^0$-integrability can be shown without knowing if the integrals can be chosen smooth: the case of Tonelli Hamiltonians with no conjugate points on $\T^n\times \R^n$ (see Theorem \ref{Tnonconj}). $C^0$-integrability for such Hamiltonians  is proved in \cite{AABZ} and  some partial results concerning the Dynamics on the invariant graphs are given, but no result similar to Arnol'd-Liouville Theorem is proved. The only case where a more accurate result is obtained is when the Tonelli Hamiltonian gives rise to a Riemannian metric after Legendre transform.   
Burago \& Ivanov proved in \cite{BuIv} that a Riemannian metric with no conjugate points is  smoothly integrable, but this  is specific to the Riemannian case and cannot be adapted to the general Tonelli case.

That is why we consider in this article the case of Lipschitz and $C^1$-integrability, that are intermediary between $C^0$ and $C^2$ integrability.  For a Tonelli Hamiltonian that is $C^1$-integrable, we will prove
\begin{itemize}
\item we can define global  continuous Arnol'd-Liouville coordinates, which are defined by using a symplectic homeomorphism;
\item  the Dynamics restricted to every invariant torus is $C^1$-conjugate to a rotation;
\item we can even define a flow for the continuous Hamiltonian vectorfields that are associated to the $C^1$ integrals (see Proposition \ref{PropFlow}).
\end{itemize}
For a Tonelli Hamiltonian that is  Lipschitz integrable, we will prove that the Dynamics restricted to every invariant torus is Lipschitz conjugate to a rotation, but we will obtain no information concerning the transverse   dependence of the conjugation.

In order to state our results, let us now introduce some definitions.

\begin{defi}
A $C^2$ Hamiltonian $H:\ T^*N\to \R$ for a compact Riemannian manifold $N$ is {\em Tonelli} if the following two assumptions are satisfied. 
\begin{itemize}
\item $H$ has super-linear growth, i.e. $\frac{H(q,p)}{\|p\|}\to\infty$ as $\|p\|\to\infty$. 
\item $H$ is convex in the fiber, i.e. $\frac{\partial^2 H}{\partial p^2}(q,p)$ is positive definite for all $q,p$.  
\end{itemize}
\end{defi}
For example, a mechanical Hamiltonian $H:\ T^*\T^n\to \R$ given by $H(q,p)=\frac{1}{2}\|p\|^2+V(q)$ is Tonelli. 
\begin{notas}\begin{itemize}
\item If $H$ is  a $C^1$ Hamiltonian defined on a symplectic manifold $(M^{(2n)}, \omega)$, we denote by $X_H$ the Hamiltonian vectorfield, that is defined by
$$\forall\ x\in M,\ \forall\ v\in T_xM,\quad \omega (X_H(x), v)=dH(x)\cdot v.$$
If moreover $H$ is $C^2$, the Hamiltonian flow associated to $H$, that is the flow of $X_H$, is denoted by $(\varphi_t^H)$.
\item If $H$ and $K$ are two $C^1$ Hamiltonians that are defined on $M$, their Poisson bracket is
$$\{H, K\}(x)=DH(x)\cdot X_K(x)=\omega (X_H(x), X_K(x)).$$
\end{itemize}
\end{notas}

\begin{defi}\label{DglobalC1integ}
Let $\cu\subset M$ be an open subset and let $H:M\rightarrow \R$ be a $C^2$ Hamiltonian. Then $H$ is \emph{$C^k$ completely integrable} for $k\geq 1$  in $\cu$ if 
\begin{itemize}
\item $\cu$ is invariant by the Hamiltonian flow of $H$;
\item there exist $n$ $C^k$ functions $H_1,\ H_2, \dots , H_n:\cu\rightarrow \R$ so that
\begin{itemize}
\item for every $x\in \cu$,  the maps $t\mapsto H_i\circ (\varphi_t^H(x))$ are constant ;
\item at every $x\in \cu$, the family $dH_1(x), \dots , dH_n(x)$ is independent;
\item for every $i, j$, we have $\{ H_i, H_j\}=0$ and $\{ H_i, H\}=0$. 
\end{itemize}
\end{itemize}
\end{defi}
\begin{remks}\begin{enumerate}
\item We cannot always take  $H_1=H$. At the critical points of $H$,   $dH(x)=0$   and a Tonelli Hamiltonian has always critical points. However, if we consider only the part of phase space without critical points, we can indeed take $H=H_1$.
\item Observe that when $k=1$, the $C^1$ Hamiltonians $H_1, \dots, H_n$ don't necessarily define a flow because the corresponding vector field is just continuous.   Hence the proof of Arnol'd-Liouville theorem (see for example \cite{Arno1}) cannot be used to determine what  the Dynamics is on the invariant Lagrangian submanifold $\{ H_1=c_1, \dots , H_n=c_n\}$. That is why the results we give in Theorem \ref{ThmGC1} and \ref{thciham} below are non-trivial.\\
In fact, in the setting of next definition for $k=1$ and when the Hamiltonian is Tonelli, we will prove in Proposition \ref{PropFlow} a posteriori that each $H_i$  surprisingly defines a flow.
\end{enumerate}
\end{remks}
\begin{defi}
Let   $H: U\rightarrow \R$ be a $C^2$ Hamiltonian and let $\ct$ be an invariant $C^k, ( k\geq 1$) Lagrangian torus contained in $U$. We say that $H$  is {\em locally $C^k$ completely integrable} at $\ct$ if there exists a neighborhood $\cu\subset U$ of $\ct$ such that
\begin{itemize}
\item $H$ is $C^k$ completely integrable in $\cu$;
\item $\ct$ is one leaf  of the foliation given by level sets of the $n$ integrals.
\end{itemize}
\end{defi}

We will sometimes need  the following narrower definition of $C^k$ integrability.

\begin{defi}\label{GC1int}
A $C^2$ Hamiltonian $H:\cv\rightarrow \R$ that is defined on  some open subset $\cv\subset T^*N$ is called {\em $G$-$C^k$ completely integrable} on some open subset $\cu\subset \cv$,
\begin{itemize}
\item
 if it is $C^k$ completely integrable and
 \item  if, with the same notations as in the definition of $C^k$ complete integrability, every Lagrangian submanifold $\{ H_1=c_1, \dots , H_n=c_n\}$ is the graph of a $C^k$ map.
 \end{itemize}
\end{defi}
\begin{remks}\begin{enumerate}
\item Observe that  for any $k\geq 1$, $G$-$C^k$   integrability is equivalent to the existence of an invariant $C^k$ foliation into Lagrangian graphs. The direct implication is a consequence of the fact that the $H_i$ are commuting in the Poisson sense (i.e. $\{ H_i, H_j\}=0$). For the reverse implication, denote the invariant $C^k$ foliation by $(\eta_a)_{a\in U}$ where $a$ is in some open subset of $\R^n$. Then we define a $C^k$ map $A=(A_1, \dots, A_n)$ by 
$$A(q, p)=a\Longleftrightarrow p=\eta_a(q).$$
Observe that each Lagrangian graph $\ct_a$ of $\eta_a$ is in the energy level $\{ A_i=a_i\}$. Hence $X_{A_i}(q, p)\in T_{(q, p)}\ct_{A(q,p)}$ and thus $$\{ A_i, A_j\}(q, p)=\omega(X_{A_i}(q, p), X_{A_j}(q, p))=0$$ because all the $\ct_a$ are Lagrangian. In a similar way, $\{ H, A_i\}=0$.
\item When $k\geq 2$, $\cu=T^*N$ and $H$ is a Tonelli Hamiltonian,  $C^k$-integrability implies $G$-$C^k$ integrability and that $M=\T^n$. Let us give briefly the arguments:  in this case the set of fixed points of the Hamiltonian flow $\cn=\{ \frac{\partial H}{\partial p}=0\}$ is an invariant submanifold that is a graph. Hence $\cn$ is one of the invariant tori given by Arnol'd-Liouville theorem. Therefore $M=\T^n$ and all the invariant tori of the foliation are also graphs : this is true for those that are close to $\cn$, and in this case there is a uniform Lipschitz constant because the Hamiltonian is Tonelli. Using this Lipschitz constant,  we can extend the neighborhood where they are graphs to the whole $T^*\T^n$.
\end{enumerate}
\end{remks}
It was a fundamental result of Gromov \cite{G} and Eliashberg \cite{E} in symplectic geometry that the group of symplectomorphisms on a symplectic manifold is $C^0$-closed in the group of diffeomorphisms. 

\begin{defi}\label{DefSympeo}
Following \cite{OhMu}, we call a homeomorphism a {\em symplectic homeomorphism} if its restriction to every relatively compact open subset is a uniform limit for the $C^0$-topology of a sequence of symplectic $C^\infty$ diffeomorphisms.\end{defi}
\begin{theorem}\label{ThmGC1}
Suppose that $H:\ T^*\T^n\rightarrow \R$ is a Tonelli Hamiltonian that is $G$-$C^1$ completely integrable in some open set $\cu \subset T^*\T^n$. Then there exists a neighborhood $U$ of $0$ in $\R^n$ and a symplectic homeomorphism $\phi:\ \T^n\times U\to \cu$ that is $C^1$ in the direction of $\T^n$ such that 
\begin{itemize}
\item $\forall\ c\in U$, $\phi^{-1}\circ \varphi^H_t\circ \phi(\T^n\times \{ c\})=\T^n\times \{ c\}$;
\item $\forall\ c\in U$, $\phi^{-1}\circ \varphi^H_t\circ \phi_{|\T^n\times \{ c\}}=R_{t\rho(c)}$;
\end{itemize}
where $\rho: U\rightarrow \R^n$ is a homeomorphism onto $\rho (U)$, and $R_{t\rho(c)}:\ \T^n\to\T^n$ is given by $R_{t\rho(c)}(x):=x+t\rho(c)$ mod $\Z^n$.
\end{theorem}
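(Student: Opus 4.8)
The plan is to build the symplectic homeomorphism $\phi$ in two stages: first construct the "action" variables by an averaging/area argument that produces continuous functions $A=(A_1,\dots,A_n)$ whose level sets are exactly the leaves of the given foliation, and then construct the "angle" variables leaf by leaf, checking that the two together assemble into a symplectic homeomorphism that is $C^1$ along the tori. By the first Remark after Definition \ref{GC1int}, $G$-$C^1$ integrability is the same as having an invariant $C^1$ foliation $(\eta_a)_{a}$ of $\cu$ into Lagrangian graphs over $\T^n$. Since $H$ is Tonelli, the graph through a point is locally a section $p=\eta_a(q)$ with $\eta_a$ exact (Lagrangian graph over $\T^n$ means $\eta_a=c(a)+dS_a$ with $c(a)\in\R^n$ the cohomology class and $S_a:\T^n\to\R$ a $C^1$ function, well defined up to a constant). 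The map $a\mapsto c(a)$ is a homeomorphism onto its image because the foliation is a genuine $C^1$ foliation and the cohomology class varies continuously and injectively along a transversal; after reparametrizing I may as well take the action coordinate to be $c$ itself, so set $A(q,p)=c$ where $c$ is the cohomology class of the leaf through $(q,p)$. As computed in that Remark, $\{A_i,A_j\}=0$ and $\{H,A_i\}=0$, so the $A_i$ are continuous commuting integrals whose level sets are the invariant tori.

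Next I construct the angle variables. Fix a leaf $\ct_c$, the graph of $\eta_c=c+dS_c$. On $\ct_c$ the restricted Hamiltonian vector field $X_H$ is a continuous, nowhere vanishing (Tonelli, away from the section $\partial H/\partial p=0$, which after a translation in $p$ we push to a boundary leaf) vector field that is $C^1$ \emph{along} the torus, and crucially it is \emph{divergence-free for the flat measure} on $\T^n$: indeed in the coordinates $q$ on $\ct_c$ the dynamics is $\dot q=\partial H/\partial p(q,\eta_c(q))$, and the invariance of the foliation forces this vector field to have constant-in-time pullback of the Liouville-type form, which pins down a flow-invariant smooth density; by the classical theory of such flows on $\T^n$ (this is where one invokes the Lipschitz, resp. $C^1$, structure as in the Lipschitz-integrability statements referenced before Theorem \ref{ThmGC1}) $X_H|_{\ct_c}$ is $C^1$-conjugate to a constant vector field $\rho(c)$. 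This produces, for each $c$, a $C^1$ diffeomorphism $\psi_c:\T^n\to\T^n$ with $\psi_c^{-1}\circ(\varphi_t^H|_{\ct_c})\circ\psi_c=R_{t\rho(c)}$; normalizing $\psi_c$ (e.g. requiring it to fix a continuously chosen base point on each leaf and to be homotopic to the identity) makes $c\mapsto\psi_c$ depend continuously on $c$ in the $C^0$ topology and $C^1$ in the torus direction. Define $\phi(x,c)=(\psi_c(x),\eta_c(\psi_c(x)))\in\ct_c$. By construction $\phi$ conjugates the flow to $(x,c)\mapsto(x+t\rho(c),c)$, giving both bulleted properties, with $\rho$ a homeomorphism onto its image because distinct rotation vectors correspond to distinct leaves.

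It remains to show $\phi$ is a symplectic homeomorphism in the sense of Definition \ref{DefSympeo}. $\phi$ is a homeomorphism onto $\cu$ and $C^1$ in the $\T^n$-direction by construction; the delicate point is \emph{symplecticity}, i.e. that $\phi^*(\text{Liouville form})=\sum c_i\,dx_i$ up to an exact form, equivalently that $\phi$ is a $C^0$-limit of symplectic diffeomorphisms. The natural route: $\phi$ pulls the canonical 1-form $p\,dq$ back to $\sum_i\big(c_i + \partial_{x_i}(S_c\circ\psi_c)\big)dx_i + (\text{terms in }dc)$, and one must check the $dc$-terms combine with the rest into $d(\text{something}) + \sum c_i\,dx_i$ — this is exactly the statement that the action-angle form of the foliation is closed, which follows from each leaf being Lagrangian plus the commutation relations, but since $\phi$ is only $C^1$ in half the variables and merely continuous transversally, "closed" has to be interpreted weakly and then upgraded to the symplectic-homeomorphism property by an explicit smoothing of $\eta_c$ and $\psi_c$ in the $c$-direction (mollification in $c$ only) producing genuine symplectic $C^\infty$ diffeomorphisms converging to $\phi$ in $C^0$.

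The main obstacle I expect is precisely this last point: transverse regularity is only $C^1$ for the foliation and nothing better for the conjugacies $\psi_c$, so one cannot directly write down $\phi$ as a smooth symplectomorphism, and the verification that the $c$-mollified maps are (i) still symplectic, (ii) still conjugate the dynamics up to small error, and (iii) converge $C^0$, requires the full strength of $C^1$-regularity of the foliation — this is the sense in which, as the abstract says, "the $C^1$ regularity of the foliation is crucial to prove the continuity of Arnol'd–Liouville coordinates." A secondary obstacle is the leaf-wise linearization: justifying that the continuous, $C^1$-along-leaf, flow-invariant-density vector field on $\T^n$ is $C^1$-conjugate to a rotation needs the invariance of the ambient $C^1$ foliation to produce the invariant density and to control the conjugacy uniformly in $c$; without the Lipschitz (resp. $C^1$) hypothesis one only gets a topological conjugacy, which is the content of the companion Lipschitz statement.
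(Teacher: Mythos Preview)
Your leaf-wise linearization step contains a genuine gap. You assert that because the restricted vector field $\dot q=\partial H/\partial p(q,\eta_c(q))$ on $\T^n$ preserves a smooth density, ``by the classical theory of such flows on $\T^n$'' it is $C^1$-conjugate to a constant vector field. This is false for $n\geq 2$: a divergence-free (or density-preserving) $C^1$ vector field on $\T^n$ need not be conjugate to a rotation --- it can have periodic orbits coexisting with irrational behaviour, or fail to have a well-defined rotation vector at all. There is no such classical theory to invoke. The paper's argument for this step is the actual content of the theorem: one first proves (Proposition~\ref{LmUniformLipLip} and Corollary~\ref{LmUniformLip}) that the symplecticity of $D\varphi_t^H$ together with the $C^1$ foliation forces $\|Df_t^a\|$ and $\|(Df_t^a)^{-1}\|$ to be bounded \emph{uniformly in $t$ and $a$}, then uses Herman's results on equi-Lipschitz torus maps (Propositions~\ref{LmLipConj} and~\ref{thlipconj}) to produce a bi-Lipschitz conjugacy on each torus with completely irrational rotation vector, and finally uses Tonelli convexity via the $A$-non-degeneracy condition (Proposition~\ref{PconvexAnondeg}) to ensure such tori are dense and to extend by continuity.

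Two further gaps flow from this. First, you claim that after a normalization the conjugacies $\psi_c$ depend continuously on $c$; even granting their existence, this is not obvious, and the paper avoids the issue by producing an \emph{explicit} candidate conjugacy $g_a(q)=q+D\mathsf c(a)^{-1}\,\partial S/\partial a(q,a)$ from the generating function (Theorem~\ref{ThmWKAMDc}), which is manifestly continuous in $a$ and is then checked to be a diffeomorphism using the uniform Lipschitz bounds above. Second, your symplecticity argument (``mollify in $c$ only'') does not explain why the smoothed maps are symplectic; the paper instead mollifies the single generating function $\mathcal S(q,c)$, so that each $\mathcal S_\varepsilon$ automatically generates a genuine symplectic diffeomorphism $\phi_\varepsilon$, and then checks $C^0$ convergence to $\phi$ (Section~\ref{SSHomeo}). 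Finally, your justification that $\rho$ is a homeomorphism is circular: you need injectivity of $\rho$ (distinct leaves have distinct rotation vectors), which in the paper comes from the strict convexity of the $A$-function and the identity $\rho=\nabla A\circ\mathsf c$.
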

This gives some symplectic Arnol'd-Liouville coordinates in the $C^0$ sense (see Chapter 10 of \cite{Arno1}) and describes precisely the Dynamics on the leaves of the foliation. \\

\begin{remk}
Observe that the conjugacy $\phi$ that we obtain has the same regularity as the foliation in the direction of the leaves but is just $C^0$ in the transverse direction. If we replace the $C^1$-integrability by a Lipschitz integrability, we loose any transverse regularity and we just obtain some results along the leaves. Let us explain this.
\end{remk}

\begin{defi}
A $C^2$ Hamiltonian $H:\cv\rightarrow \R$ that is defined on  some open subset $\cv\subset T^*N$ is called {\em $G$-Lipschitz completely integrable} on some open subset $\cu\subset \cv$  if $\cu$ admits a Lipschitz foliation by invariant Lipschitz Lagrangian graphs.
\end{defi}
Let us recall that a Lipschitz graph $\cl$ admits Lebesgue almost everywhere a tangent subspace by Rademacher theorem (see \cite{EvGa}, Theorem 2, page 81). Such a graph is Lagrangian if and only if these tangent subspaces are Lagrangian. This is equivalent to asking that $\cl$  is the graph of $c+du$ where $c$ is a closed smooth $1$-form and $u:M\rightarrow \R$ is $C^{1, 1}$.
\begin{theorem} \label{ThLip}
Suppose that  the Hamiltonian $H:\ T^*\T^n\to \R$ is Tonelli  and is $G$-Lipschitz completely integrable. Then restricted to each leaf, the Hamiltonian flow has a unique well-defined rotation vector, and is bi-Lipschitz conjugate to a translation flow by the rotation vector on $\T^n$. Moreover, all the leaves are in fact $C^1$.
\end{theorem}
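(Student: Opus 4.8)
The plan is to work one leaf at a time: fix a leaf $\ct=\ct_a$, transport the restricted Hamiltonian flow to a flow on $\T^n$ via the graph parametrisation, and recognise the transported flow as one that the weak-KAM structure --- together with the Lipschitz regularity of the ambient foliation --- forces to be linearizable. Write $\ct_a=\{(q,\beta_a(q)):q\in\T^n\}$ with $\beta_a=c_a+du_a$, $u_a\in C^{1,1}$. Since $\ct_a$ is $\varphi^H_t$-invariant and $t\mapsto\varphi^H_t$ is $C^1$, the orbits on $\ct_a$ project to $C^1$ curves on $\T^n$ solving $\dot q=\partial_pH(q,\beta_a(q))=:X_a(q)$; as $H\in C^2$ and $\beta_a$ is Lipschitz, $X_a$ is a Lipschitz vectorfield, so it generates a flow $g^a_t$ and $q\mapsto(q,\beta_a(q))$ is a bi-Lipschitz conjugacy from $g^a_t$ to $\varphi^H_t|_{\ct_a}$; so everything reduces to $g^a_t$. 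Next, at a.e.\ $q$ (where $\beta_a$ is differentiable) invariance of the graph makes $X_H$ tangent to $\ct_a$, i.e.\ $-\partial_qH=(D\beta_a)\,\partial_pH$, and since $D\beta_a=Dc_a+D^2u_a$ is symmetric a.e., $\nabla_q\big(H(q,\beta_a(q))\big)=\partial_qH+(D\beta_a)^{T}\partial_pH=0$ a.e.; a Lipschitz function with a.e.\ vanishing gradient being constant, $H\equiv E_a$ on $\ct_a$. Thus $u_a$ is a $C^{1,1}$ solution of the stationary Hamilton--Jacobi equation $H(q,c_a+du_a(q))=E_a$, hence a weak-KAM solution, and the orbits on $\ct_a$ are calibrated curves; one deduces that $X_a$ is non-vanishing unless $\ct_a$ consists entirely of equilibria of $\varphi^H_t$ (a trivial case with $\rho(a)=0$), so assume $\delta\le\|X_a\|\le M$ on $\T^n$. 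Finally, pushing the Liouville volume through the bi-Lipschitz chart $(q,p)\mapsto(q,A(q,p))$ that straightens the Lipschitz foliation yields, for every $a$, a $g^a_t$-invariant probability $\mu_a$ on $\T^n$ comparable to Lebesgue.

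\textbf{Rotation vector and a topological conjugacy.} Now the heart: because the calibrated orbits on $\ct_a$ cannot cross transversally and cover the whole graph over $\T^n$ (no gaps), and because the cohomology classes of the nearby leaves fill an open set --- so their rotation vectors fill a neighbourhood of $\rho(a)$ --- one shows that all orbits on $\ct_a$ share a common rotation vector $\rho(a)$ with \emph{uniformly bounded deviations}, $\sup_{t,q}\big|\widetilde{g^a_t(q)}-\tilde q-t\rho(a)\big|<\infty$; here one uses the Lipschitz, not merely $C^0$, regularity of the \emph{whole} foliation to control how far a minimizer on a fixed leaf can stray from the rotation direction singled out by that leaf. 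This boundedness is exactly the hypothesis of the Gottschalk--Hedlund theorem for the cohomological equations $X_a\!\cdot\!\phi_i=\rho_i(a)-X_{a,i}$ over $g^a_t$: it produces continuous $\phi_i$, so $h_a:=\mathrm{id}+\phi$ satisfies $Dh_a\!\cdot\!X_a=\rho(a)$ a.e.\ and $h_a\circ g^a_t=R_{t\rho(a)}\circ h_a$. When $\rho(a)$ is totally irrational, $R_{t\rho(a)}$ is uniquely ergodic, so $(h_a)_*\mu_a=\Leb$; since $\mu_a\sim\Leb$ has full support, $h_a$ collapses no open set and is therefore a homeomorphism, and the displacement identity exhibits $\rho(a)$ as the genuine, unique rotation vector of every orbit.

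\textbf{Bi-Lipschitz upgrade, and $C^1$-regularity of the leaves.} For totally irrational $\rho(a)$, the identity $(h_a)_*\mu_a=\Leb$ together with $\mu_a\sim\Leb$ forces the Jacobian determinant of $h_a$ to be bounded above and below a.e.; combined with $\delta\le\|X_a\|\le M$, with $Dh_a\!\cdot\!X_a=\rho(a)$, and with the uniform $C^{1,1}$ bound on the leaves (which feeds a Lipschitz bound into $h_a$), this makes $h_a$ bi-Lipschitz, with constants depending only on the Tonelli data and the Lipschitz data of the foliation. Since the totally irrational $\rho(a)$ are dense --- the frequency map being, as in the $C^1$ case, a homeomorphism onto an open set --- an arbitrary leaf is handled by taking $a_k\to a$ along such values and extracting a limit of the $h_{a_k}$, whose bi-Lipschitz constants are uniform; the limit conjugates $g^a_t$ to $R_{t\rho(a)}$, whence every orbit of $\varphi^H_t|_{\ct_a}$ has $\rho(a)$ as its unique rotation vector. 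Pulled back through $q\mapsto(q,\beta_a(q))$, this is the claimed bi-Lipschitz conjugacy. Finally, being bi-Lipschitz conjugate to a translation, $\varphi^H_t|_{\ct_a}$ is equicontinuous, so the linearized (Jacobi) dynamics along its orbits has no exponential spreading; this forces the two Green Lagrangian subbundles to coincide, and since $T_x\ct_a$ lies between them at a.e.\ $x$ while coinciding Green bundles are continuous, $T\ct_a$ is a continuous Lagrangian subbundle --- i.e.\ $\beta_a$ is $C^1$ and $\ct_a$ is a $C^1$ torus.

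\textbf{Expected main obstacle.} The soft parts are the reduction, the Hamilton--Jacobi computation, and the Green-bundle conclusion. The real difficulty lies in the second step and the bi-Lipschitz part of the third: turning the bare existence of a rotation vector into a \emph{uniform} bounded-deviation estimate --- which is what rules out Denjoy/Aubry--Mather-type obstructions to linearizability, and which genuinely requires the Lipschitz regularity of the \emph{foliation}, not just of a single invariant graph --- and then breaking the apparent circularity between ``equicontinuity'' and ``uniform Lipschitz estimates'' so as to promote the Gottschalk--Hedlund conjugacy from merely continuous to bi-Lipschitz.
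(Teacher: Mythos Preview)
Your overall architecture --- reduce to the projected flow $g^a_t$ on $\T^n$, linearize on the leaves with completely irrational rotation vector, approximate arbitrary leaves by those, and finish with a Green-bundle argument for $C^1$ regularity --- matches the paper's. But the central technical input is missing, and two of your substitutes do not work.

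\medskip
\textbf{The missing estimate.} The paper's engine is a \emph{uniform} bi-Lipschitz bound on the iterates: there is $K$ (the bi-Lipschitz constant of the foliation) such that $K^{-1}\le\|Dg^a_t\|\le K$ for all $t\in\R$ and a.e.\ $q$. This comes from a concrete symplectic computation (Proposition~\ref{LmUniformLipLip}): in coordinates adapted to the Lipschitz foliation, $D\varphi^H_t$ is block upper-triangular, $\begin{pmatrix} Dg^a_t & b_t\\ 0 & d_t\end{pmatrix}$, where $d_t=\frac{\partial\eta_a}{\partial a}(g^a_t(q))\big(\frac{\partial\eta_a}{\partial a}(q)\big)^{-1}$; symplecticity forces $Dg^a_t=d_t^{-T}$, and $d_t,d_t^{-1}$ are bounded by $K$ independently of $t$. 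This single estimate then yields (i) the bounded-deviation inequality $\sup_{t,q}\|F^a_t(q)-q-t\rho(a)\|<\infty$, (ii) that Herman's averaging conjugacy $h_a$ is $K$-Lipschitz, and (iii) via an ergodic determinant argument (Proposition~\ref{thlipconj}), that $h_a^{-1}$ is $K^n$-Lipschitz. You never establish anything of the sort; instead you assert bounded deviations with the heuristic that calibrated orbits do not cross and that nearby cohomology classes fill an open set. That heuristic is not a proof, and in fact the analogous statement for a merely $C^0$ foliation is open (see Problem~\ref{ProbAABZ}): without the uniform-in-$t$ derivative bound there is no known mechanism, weak-KAM or otherwise, that produces bounded deviations on \emph{every} leaf.

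\medskip
\textbf{The bi-Lipschitz upgrade and the density step.} Your upgrade argument is also incomplete: $(h_a)_*\mu_a=\Leb$ with $\mu_a\sim\Leb$ only bounds $|\det Dh_a|$ above and below, and $Dh_a\cdot X_a=\rho(a)$ controls one direction; together these do not bound all singular values of $Dh_a$, so they do not give bi-Lipschitz. The paper instead first obtains the Lipschitz bound on $h_a$ from the equi-Lipschitz family $(g^a_t)$ and then runs an ergodic contradiction on $|\det Dh_a|$ to bound $\|Dh_a^{-1}\|$. Separately, you invoke ``the frequency map being, as in the $C^1$ case, a homeomorphism'' to get density of totally irrational leaves; but the $C^1$ argument uses the $A$-function and $D\mathsf c$, which are not available here. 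In the Lipschitz setting the paper proves injectivity of $\rho$ via Mather's theory of minimizing measures (two leaves with the same rotation vector would force a minimizing measure supported in $\ct_a\cap\ct_{a'}=\emptyset$) and then applies invariance of domain.
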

\begin{remk}
Observe that we do not know if the conjugacies are $C^1$.
\end{remk}

Theorem \ref{ThmGC1} is global but ask a little more that $C^1$ integrability. If we have just $C^1$ integrability (instead of $G$-$C^1$ integrability), we obtain a local result.

\begin{defi}
Let $\ct\subset M$ be a $C^1$ Lagrangian torus in a symplectic manifold $(M^{(2n)}, \omega)$ and let $H:M\rightarrow \R$ be a $C^2$ Hamiltonian. We say that $H$ {\em has positive torsion along $\ct$ } if there exist
\begin{itemize}
\item a neighbourhood $\cu$ of $\ct$ in $M$;
\item a neigbourhood $\cv$ of the zero section in $T^*\T^n$;
\item a $C^2$ symplectic diffeomorphism $\phi:\cu\rightarrow \cv$ such that $\phi(\ct)$ is the graph of a $C^1$map and

$$\forall\ (q,p)\in \cv;\ \frac{\partial^2 (H\circ \phi^{-1})}{\partial p^2}(q, p)\quad{\rm is\ positive\ definite.}$$
\end{itemize}
\end{defi}
\begin{remk}
It is proved in \cite{Wei1}, Extension Theorem in Lecture 5, as well as the proof of Theorem in Lecture 6 (see also Theorem 3.33 of \cite{MS}), that a small neighborhood of a Lagrangian $C^k$, $k\ge 1,$ submanifold $\ct$ is always $C^k$ symplectomorphic to a neighborhood of the zero section in $T^*\ct$.  

So the two  important things in the definition are that 
\begin{itemize} 
\item we can choose $C^2$ coordinates even if $\ct$ is just $C^1$: it is possible just by perturbing a $C^1$ symplectic diffeomorphism into a $C^2$ one;
\item in  the new coordinates, $H$ has to be strictly convex in the fiber direction.
\end{itemize}
\end{remk}

\begin{theorem}\label{thciham}
Let $H: M\rightarrow \R$ be a $C^2$ Hamiltonian that has an invariant Lagrangian torus $\ct$. Then, if $H$ has positive torsion along $\ct$ and is locally $C^1$ completely integrable at $\ct$, there exists a neighborhood $\cu$ of $\ct$ in $M$, a open set  $U$ containing $0$ in $\R^n$ and a symplectic homeomorphism $\phi:\T^n\times U\rightarrow\cu $ that is $C^1$ in the direction of $\T^n$ and such that:
\begin{itemize}
\item $\forall\ c\in U$, $\phi^{-1}\circ \varphi^H_t\circ \phi(\T^n\times \{ c\})=\T^n\times \{ c\}$;
\item $\forall\ c\in U$, $\phi^{-1}\circ \varphi^H_t\circ \phi_{|\T^n\times \{ c\}}=R_{t\rho(c)}$;
\end{itemize}
where $\rho: U\rightarrow \R^n$ is a homeomorphism onto $\rho (U)$, and $R_{t\rho(c)}:\ \T^n\to\T^n$ is given by $R_{t\rho(c)}(x):=x+t\rho(c)$ mod $\Z^n$.

\end{theorem}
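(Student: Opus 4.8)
The plan is to reduce Theorem \ref{thciham} to the global statement Theorem \ref{ThmGC1} by upgrading the \emph{local} $C^1$ complete integrability at $\ct$ to \emph{$G$-$C^1$ complete integrability} in a (possibly smaller) neighborhood of $\ct$, and then transporting everything into $T^*\T^n$ via the positive-torsion coordinates. First I would use the positive torsion hypothesis: by definition there is a $C^2$ symplectic diffeomorphism $\phi_0:\cu\to\cv\subset T^*\T^n$ sending $\ct$ to a $C^1$ Lagrangian graph, in which $H\circ\phi_0^{-1}$ is strictly convex in $p$. Shrinking $\cv$, the torus $\phi_0(\ct)$ is a graph and, by the implicit function theorem applied to the fiberwise gradient, $\phi_0(\ct)$ coincides with the zero section of the fiberwise-critical set after a further $C^2$ change of coordinates; more simply, I would just note that $\phi_0(\ct)$ being a $C^1$ Lagrangian graph $c+du$ can be straightened to the zero section by the symplectomorphism $(q,p)\mapsto(q,p-c-du(q))$, which is only $C^0$ in general, so instead I keep $\phi_0$ as is and work with the graph. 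The key point is that in these coordinates $H\circ\phi_0^{-1}$ is a genuine Tonelli-type Hamiltonian near the zero section (convex in the fibers, and one can extend/modify it outside a neighborhood to be globally Tonelli on $T^*\T^n$ without changing it near $\phi_0(\ct)$, since only a neighborhood of one leaf matters).

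Next I would show that the invariant $C^1$ foliation by Lagrangian level sets $\{H_1=c_1,\dots,H_n=c_n\}$ from the local $C^1$ integrability hypothesis consists, after shrinking to a smaller neighborhood $\cu'$ of $\ct$, of $C^1$ Lagrangian \emph{graphs} in the $\phi_0$-coordinates. This is where positive torsion is essential: the leaf through $\ct$ is already a graph, and graphs form an open condition among $C^1$ Lagrangian submanifolds $C^1$-close to a given graph; but to get a whole neighborhood of leaves to be graphs one needs a uniform transversality estimate. Here the strict fiber-convexity of $H\circ\phi_0^{-1}$ plays exactly the role it plays in Remark (2) after Definition \ref{GC1int} for the global Tonelli case: it provides a uniform Lipschitz bound on the leaves near $\ct$ (the nearby leaves cannot "turn vertical"), so they remain graphs on a fixed neighborhood. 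I expect this to be the main obstacle of the argument — making precise, purely from positive torsion along a single torus plus $C^1$ integrability, that nearby leaves stay uniformly transverse to the fibers. The cleanest route is probably: the restriction of $H\circ\phi_0^{-1}$ to a fiber $\{q=q_0\}$ is strictly convex, hence the map $p\mapsto d_q(H_i\circ\phi_0^{-1})$ behaves monotonically in the relevant sense, forcing each level set to meet each fiber in at most one point locally and with controlled tangent; combined with the fact that the foliation is $C^1$ (so leaves vary continuously) this gives the graph property on a uniform neighborhood.

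Once we know that in the $\phi_0$-coordinates $H\circ\phi_0^{-1}$ is (extendably) Tonelli on $T^*\T^n$ and that the restriction of the integrable foliation to the neighborhood $\phi_0(\cu')$ is a $C^1$ foliation by invariant Lagrangian graphs, we are exactly in the hypotheses of Theorem \ref{ThmGC1} — more precisely, the $G$-$C^1$ complete integrability hypothesis is verified on the open set $\phi_0(\cu')$ by Remark (1) after Definition \ref{GC1int} (a $C^1$ foliation by invariant Lagrangian graphs is equivalent to $G$-$C^1$ integrability). Applying Theorem \ref{ThmGC1} to $H\circ\phi_0^{-1}$ on that open set yields a neighborhood $U\ni 0$ in $\R^n$ and a symplectic homeomorphism $\psi:\T^n\times U\to\phi_0(\cu')$, $C^1$ along $\T^n$, conjugating $\varphi_t^{H\circ\phi_0^{-1}}$ to the translation flow $R_{t\rho(c)}$, with $\rho$ a homeomorphism onto its image. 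Finally I set $\phi:=\phi_0^{-1}\circ\psi:\T^n\times U\to\cu:=\cu'$. Since $\phi_0$ is a $C^2$ (in particular symplectic) diffeomorphism and $\psi$ is a symplectic homeomorphism $C^1$ along $\T^n$, the composition $\phi$ is again a symplectic homeomorphism that is $C^1$ in the direction of $\T^n$ (composition with a smooth symplectic diffeomorphism preserves the property of being a $C^0$-limit of symplectic diffeomorphisms, and preserves leafwise $C^1$ regularity); and since $\phi_0$ conjugates $\varphi_t^H$ on $\cu$ to $\varphi_t^{H\circ\phi_0^{-1}}$ on $\phi_0(\cu)$, the two displayed conjugacy identities for $\phi$ follow immediately from those for $\psi$, with the same $\rho$. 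This completes the proof modulo the uniform-transversality step, which I would expect to occupy the bulk of the technical work.
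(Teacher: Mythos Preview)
Your overall strategy---use the positive-torsion coordinates to land in $T^*\T^n$, upgrade local $C^1$ integrability to $G$-$C^1$ integrability there, then invoke the global result and pull back through the $C^2$ symplectomorphism---is correct and is essentially the paper's route (Theorem~\ref{TAnondeghomeo} together with Proposition~\ref{PconvexAnondeg} and Proposition~\ref{Plocalmin}).

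However, you have the technical weight in the wrong place. The step you flag as ``the main obstacle''---showing that the nearby leaves are graphs in the positive-torsion chart---is in fact elementary and does \emph{not} use convexity. Since $\ch=(H_1,\dots,H_n)$ is a $C^1$ submersion, the tangent bundle of the foliation is $\ker D\ch$, which varies continuously; as $\phi_0(\ct)$ is a graph (tangent spaces transverse to the fibers), the same transversality persists for nearby leaves, which are therefore graphs on a uniform neighborhood. This is exactly the content of Proposition~\ref{Plocalmin}, proved using only local $C^1$ integrability (via Weinstein's neighborhood theorem), with no torsion hypothesis. Your sketched monotonicity argument (``$p\mapsto d_q(H_i\circ\phi_0^{-1})$ behaves monotonically'') is vague, would not work as written, and is unnecessary.

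The genuine role of positive torsion is different: the fiber convexity of $H\circ\phi_0^{-1}$ is what makes the $A$-function strictly convex and hence $\nabla A$ a homeomorphism (Proposition~\ref{PconvexAnondeg}), i.e.\ it supplies the $A$-non-degeneracy needed so that tori with completely irrational rotation vector are dense. In the paper this feeds directly into Theorem~\ref{TAnondeghomeo}; in your version it is hidden inside the appeal to Theorem~\ref{ThmGC1} (whose Tonelli hypothesis is used precisely to get $A$-non-degeneracy). Either route works, but your extension of $H\circ\phi_0^{-1}$ to a global Tonelli Hamiltonian is an extra step the paper avoids by invoking Theorem~\ref{TAnondeghomeo}, which only needs strict convexity on the open set where the foliation lives.
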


\begin{cor}\label{C1}
Any Tonelli Hamiltonian that is $G$-$C^1$ completely integrable lies in the $C^1$ closure of the set of smooth completely integrable $($in the usual Arnol'd-Liouville sense$)$ Hamiltonians. More precisely, if $H:T^*\T^n\rightarrow \R$ is $G$-$C^1$ completely integrable on some set $\{ H < K\}$ or the whole $T^*\T^n$, there exists a sequence $(H_i)$ of $C^\infty$ Hamiltonians that are $G$-$C^\infty$ completely integrable on a $\{ H<K-\varepsilon\}$ or the whole $T^*\T^n$ such that
\begin{itemize}
\item the sequence $H_i$ uniformly converges on any compact set to $H$ for the $C^1$ topology;
\item the invariant foliation for the $H_i$ uniformly converges on any compact subset to the invariant foliation for $H$ for the $C^1$ topology;
\item the Hamiltonian flows $(\varphi_t^{H_i}),\ t\in [-1,1]$ uniformly converge to $(\varphi_t^H)$ on any compact subset for the $C^0$ topology.
\end{itemize}
\end{cor}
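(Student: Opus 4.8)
The plan is to produce the approximating Hamiltonians by smoothing the invariant foliation --- rather than the integrals --- while keeping the Lagrangian condition under control. By the first remark following Definition~\ref{GC1int}, $G$--$C^1$ complete integrability in $\cu$ is equivalent to the existence of an invariant $C^1$ foliation of $\cu$ by Lagrangian graphs. Parametrizing the leaves by their intersection with a fixed cotangent fibre $\{q_0\}\times\R^n$, and using that $C^1$ foliations have $C^1$ holonomy, one obtains an open set $W\subset\R^n$ and a $C^1$--diffeomorphism $\Phi\colon W\times\T^n\to\cu$ of the form $\Phi(a,q)=(q,\xi_a(q))$, where $\ct_a:=\mathrm{graph}(\xi_a)$ is the leaf through $(q_0,a)$ and $\xi_a(q_0)=a$; in particular $\partial_a\xi_a(q)$ is invertible at every $(a,q)$ and each $1$--form $\xi_a\,dq$ is closed. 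Let $A\colon\cu\to W$ be the $a$--component of $\Phi^{-1}$; it is a $C^1$ submersion whose level sets are the leaves, so $\{A_i,A_j\}=0$ by the same remark. Moreover $X_H$ is tangent to each (invariant, Lagrangian) leaf $\ct_a$, so $dH$ vanishes along $\ct_a$ and $H$ is constant there; hence $H=g\circ A$ with $g(a):=H(q_0,a)\in C^2(W)$.

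Next I would mollify. Fix mollifiers $\rho_i$ and let $\xi^i$ be the convolution of $\xi$ in the $(a,q)$ variables (periodic in $q$), defined on open sets $W^i$ with $\overline{W^i}\Subset W$ and $\bigcup_iW^i=W$; set $g^i:=g*\rho_i$. Then $\xi^i\to\xi$ in $C^1_{\mathrm{loc}}(W\times\T^n)$ and $g^i\to g$ in $C^1_{\mathrm{loc}}(W)$, and, since convolution commutes with $\partial_q$, the pointwise identities expressing closedness of $\xi_a\,dq$ persist for $\xi^i$, so each $\mathrm{graph}(\xi^i_a)$ is a $C^\infty$ Lagrangian graph. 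For $i$ large, $\Phi^i(a,q):=(q,\xi^i_a(q))$ restricted to $W^i\times\T^n$ is a $C^\infty$--diffeomorphism onto an open set $\cu^i$ (it is a $C^1$--small perturbation of the $C^1$--embedding $\Phi|_{\overline{W^i}\times\T^n}$, and $D\Phi^i$ stays invertible by uniform convergence of $D\Phi^i$ to the uniformly invertible $D\Phi$); moreover $\cu^i$ exhausts $\cu$, and in the sublevel version it eventually contains $\{H<K-\varepsilon\}$ (since $\{g\le K-\varepsilon/2\}$ is compact in $W$ by the Tonelli assumption). Let $A^i$ be the $a$--component of $(\Phi^i)^{-1}$: a $C^\infty$ submersion whose level sets are Lagrangian graphs, so $\{A^i_j,A^i_k\}=0$, and $A^i\to A$ in $C^1_{\mathrm{loc}}$.

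Finally set $H_i:=g^i\circ A^i$ on $\cu^i$ (in the case $\cu=T^*\T^n$, first extend $\xi^i$ and $g^i$ to $\R^n\times\T^n$, e.g.\ patching with the vertical foliation for $|a|$ large, so $H_i$ becomes globally defined and $G$--$C^\infty$ integrable). Then $A^i_1,\dots,A^i_n$ are $C^\infty$ integrals of $H_i$ with independent differentials and vanishing mutual Poisson brackets; $\{A^i_j,H_i\}=0$ because $H_i$ is constant on the leaves of $A^i$; $\cu^i$ is $\varphi_t^{H_i}$--invariant because its leaves are compact and invariant; and every leaf is a Lagrangian graph, so $H_i$ is $G$--$C^\infty$ completely integrable in $\cu^i$. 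The three convergences then follow: $H_i=g^i\circ A^i\to g\circ A=H$ in $C^1$ on compacta; the invariant foliations converge in $C^1$ on compacta because $\xi^i\to\xi$ does; and for the flows, $X_{H_i}\to X_H$ in $C^0$ on compacta while $X_H$ is $C^1$, hence locally Lipschitz, so --- all relevant orbits staying, for $|t|\le 1$ and $i$ large, in a common compact union of leaves --- the Gronwall continuous--dependence estimate gives $\varphi_t^{H_i}\to\varphi_t^H$ in $C^0$ uniformly on compacta for $t\in[-1,1]$.

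I expect the main obstacle to be ensuring that the smoothed family is still a genuine \emph{foliation}, and not merely a disjoint family of Lagrangian graphs. This is why one has to pass first to a parametrization for which the foliation chart $\Phi$ is an honest $C^1$--diffeomorphism, so that $\partial_a\xi_a$ is uniformly invertible on compacta and $\Phi^i$ remains a diffeomorphism; and why the smoothing has to act on the graphs $\xi_a$ through a $\partial_q$--commuting operation --- mollifying the integrals $A$ directly would immediately destroy the identities $\{A_i,A_j\}=0$. A secondary technical point is that $C^0$--convergence of Hamiltonian flows is not a formal consequence of $C^0$--convergence of the vector fields and genuinely uses that the limit Hamiltonian is $C^2$ (hence that $X_H$ is Lipschitz).
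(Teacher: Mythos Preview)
Your proposal is correct and follows essentially the same route as the paper: smooth the $C^1$ foliation map into a nearby $C^\infty$ Lagrangian foliation, and pull back a mollified reduced Hamiltonian through the new leaf--parameter map. The only difference is cosmetic --- the paper parametrizes the leaves by their cohomology class $c=\mathsf c(a)$ and mollifies the generating function $\cs(q,c)$ built in Section~\ref{SSHomeo}, writing $H_\epsilon=A_\epsilon\circ\pi_2\circ f_\epsilon^{-1}$, whereas you parametrize by the leaf's value at a fixed basepoint $q_0$ and mollify $\xi$ directly (which makes your reduced Hamiltonian $g(a)=H(q_0,a)$ visibly $C^2$ and spares you the detour through $\mathsf c$ and $S$).
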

\begin{remks}
\begin{enumerate}
\item By the continuous dependence on parameter of solutions of ODEs, the last point is a consequence of the first one : if a sequence on Lipschitz vectorfields $(X_i)$ converge in $C^0$ topology to a Lipschitz vectorfield $X$, the the sequence of flows of the $X_i$ converge in $C^0$ topology to the flow of $X$.  
\item We don't know if we can choose the $H_i$ being Tonelli, we are just able to prove that they are symplectically smoothly conjugate to some Tonelli Hamiltonians.
\end{enumerate}
\end{remks}
\begin{cor}\label{C2} Let $H: M\rightarrow \R$ be a $C^2$ Hamiltonian that has an invariant Lagrangian torus $\ct$. Then, if $H$ has positive torsion along $\ct$ and is locally $C^1$ completely integrable at $\ct$, there exists a neighborhood $\cu$ of $\ct$ in $M$ and 
  a sequence $(H_i)$ of $C^\infty$ completely integrable Hamiltonians $H_i:\cu\rightarrow \R$ such that
\begin{itemize}
\item the sequence $H_i$ uniformly converges on any compact set to $H$ for the $C^1$ topology;
\item the invariant foliation for the $H_i$ uniformly converges on any compact subset to the invariant foliation for $H$ for the $C^1$ topology;
\item the Hamiltonian flows $(\varphi_t^{H_i}),\ t\in [-1,1]$ uniformly converge to $(\varphi_t^H)$ on any compact subset for the $C^0$ topology.
\end{itemize}
\end{cor}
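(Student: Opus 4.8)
The plan is to mimic the proof of Corollary \ref{C1} in localized form: reduce, near $\ct$, to a $C^1$ foliation by Lagrangian graphs inside $T^*\T^n$, smooth that foliation, and read off from it a family of $C^\infty$ integrable Hamiltonians; the $C^1$ convergence of the Hamiltonians then gives the other two convergences essentially for free.

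\emph{Normalization.} Since $H$ has positive torsion along $\ct$ and is locally $C^1$ completely integrable there, Theorem \ref{thciham} applies, so I may assume that a neighborhood of $\ct$ carries an invariant $C^1$ foliation $\cf$ by Lagrangian $C^1$ tori, one of which is $\ct$. Using the Weinstein-type neighborhood theorem recalled above (a neighborhood of the $C^1$ torus $\ct$ is $C^1$-symplectomorphic to a neighborhood of the zero section of $T^*\T^n$) together with the $C^1$-density of $C^\infty$ symplectomorphisms among $C^1$ ones, I would fix a $C^\infty$ symplectic diffeomorphism $\phi$ from a neighborhood $\cu_0$ of $\ct$ onto an open set $\cv_0\subset T^*\T^n$ with $\phi(\ct)$ a $C^1$ Lagrangian torus $C^1$-close to the zero section. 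After shrinking, every leaf of $\phi_*\cf$ is then a $C^1$-torus $C^1$-close to the zero section, hence the graph of a closed $C^1$ $1$-form $\eta_a$, indexed by cohomology class $a=[\eta_a]\in\R^n$ near $0$; that $\cf$ is a $C^1$ foliation means that $(q,a)\mapsto\eta_a(q)$ is $C^1$ and that $(q,a)\mapsto(q,\eta_a(q))$ is a $C^1$ diffeomorphism onto an open set $\cv\subset\cv_0$. Finally $K:=H\circ\phi^{-1}$ is $C^2$ on $\cv$ and, Poisson-commuting with the integrals, is constant along each leaf; I write its value on the graph of $\eta_a$ as $\kappa(a)$, a $C^1$ function of $a$ (restrict $K$ to the smooth fiber $\{q=q_0\}$, along which $a\mapsto\eta_a(q_0)$ is a $C^1$ parametrization).

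\emph{Smoothing and construction.} Next I would mollify, by convolution jointly in $(q,a)$, the map $(q,a)\mapsto\eta_a(q)$, getting $C^\infty$ families $\eta^{(i)}_\bullet\to\eta_\bullet$ in $C^1$; each $\eta^{(i)}_a$ is still closed (convolution in $q$ commutes with $d$) and still has cohomology class $a$. Their graphs $\ct^{(i)}_a$ are Lagrangian, converge to the leaves $\ct_a$ in $C^1$ uniformly in $a$, and the $C^\infty$ maps $(q,a)\mapsto(q,\eta^{(i)}_a(q))$ --- being $C^1$-small perturbations of a $C^1$ diffeomorphism --- are, for $i$ large, $C^\infty$ diffeomorphisms onto open sets $\cv_i$ exhausting $\cv$; so $(\ct^{(i)}_a)_a$ is a $C^\infty$ foliation of $\cv_i$ by Lagrangian graphs, with associated $C^\infty$ submersion $I^{(i)}\colon\cv_i\to\R^n$, $I^{(i)}(q,p)=a\Leftrightarrow p=\eta^{(i)}_a(q)$, whose components Poisson-commute by the remark after Definition \ref{GC1int}, and with $I^{(i)}\to A$ in $C^1$, where $A(q,p)=a\Leftrightarrow p=\eta_a(q)$ is the $C^1$ integral map of $\phi_*\cf$, so that $K=\kappa\circ A$. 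Then I would take $k_i$ a mollification of $\kappa$ (so $k_i\to\kappa$ in $C^1$ near $0$) and set $H_i:=(k_i\circ I^{(i)})\circ\phi$, of class $C^\infty$ on the $\varphi^{H_i}_t$-invariant (because saturated by compact leaves) set $\phi^{-1}(\cv_i)$.

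\emph{Verification and the main difficulty.} On a fixed small saturated neighborhood $\cu$ of $\ct$ with $\phi(\cu)$ relatively compact in $\cv$ --- so that $\cu\subset\phi^{-1}(\cv_i)$ for all large $i$ --- each $H_i$ is completely integrable, with integrals $I^{(i)}_1\circ\phi,\dots,I^{(i)}_n\circ\phi$ (independent differentials, Poisson-commuting pairwise and with $H_i$ since $\phi$ is symplectic and $H_i=k_i\circ I^{(i)}\circ\phi$, constant along the orbits of $\varphi^{H_i}_t$, with $\ct$ a common level set); one has $H_i\to(\kappa\circ A)\circ\phi=H$ in $C^1$ on compacts (composition being $C^1$-continuous, $k_i\to\kappa$, $I^{(i)}\to A$ in $C^1$); the $H_i$-foliations $(\phi^{-1}(\ct^{(i)}_a))_a$ converge in $C^1$ to the $H$-foliation $(\phi^{-1}(\ct_a))_a$ ($\phi$ fixed $C^\infty$, $\ct^{(i)}_a\to\ct_a$ in $C^1$); and, $H$ being $C^2$, $X_H$ is Lipschitz and $X_{H_i}\to X_H$ in $C^0$ ($\omega$ smooth, $H_i\to H$ in $C^1$), so by continuous dependence of the solutions of a Lipschitz ODE on the field --- the first Remark after Corollary \ref{C1} --- the flows satisfy $\varphi^{H_i}_t\to\varphi^H_t$ uniformly on compacts for $t\in[-1,1]$. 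Modulo this bookkeeping the whole analytic substance sits in Theorem \ref{thciham}, invoked once to guarantee that the nearby leaves really are invariant Lagrangian tori; I expect the main obstacle to be the first step, namely arranging the normalizing chart $\phi$ to be $C^\infty$ rather than merely $C^1$ --- equivalently, smoothing the $C^1$ Lagrangian foliation near $\ct$ inside the smooth category of $M$ --- which rests on the $C^1$-density of smooth symplectomorphisms and on the $C^1$-stability of the property of being the graph of a closed $1$-form near the zero section. The subsequent mollification automatically preserves both the Lagrangian character of the leaves (the $\eta^{(i)}_a$ remain closed) and the foliation property (a $C^1$-small perturbation of a $C^1$ diffeomorphism is a diffeomorphism).
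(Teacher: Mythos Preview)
Your proposal is correct and follows essentially the same route as the paper: reduce via a $C^\infty$ symplectic chart to a $C^1$ foliation by Lagrangian graphs in $T^*\T^n$, mollify the foliation, and define $H_i$ as a smooth function of the smoothed leaf map. Two minor remarks: first, you do not need to invoke Theorem \ref{thciham} in the normalization step --- the invariant $C^1$ Lagrangian foliation is already given by the hypothesis of local $C^1$ complete integrability, and the passage to a $C^\infty$ chart sending leaves to graphs is exactly Proposition \ref{Plocalmin}, so your ``main obstacle'' is already handled there; second, the paper mollifies the generating function $\cs(q,c)$ (parametrized by cohomology class) rather than the graphs $\eta_a$ directly, which automatically yields closed $1$-forms and lets it recycle the diffeomorphisms $f_\epsilon$ built in Section \ref{SSHomeo}, but your direct mollification of $\eta_a$ works equally well for the reasons you give.
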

{\bf Structure of the proofs and comments.}
\begin{itemize}
\item In section \ref{S2}, we will use  Herman's results concerning the conjugation of torus homeomorphisms to rotations (see \cite{Her1}) and even extend some of them to describe the Dynamics on the tori that carry a minimal Dynamics (i.e. all the orbits are dense) in the case of Lipschitz or $C^1$ complete integrability;
\item in section \ref{S3}, we will introduce a new condition, called $A$-non degeneracy,  that implies the density of the union of the minimal tori; this condition is satisfied by Tonelli Hamiltonians and Hamiltonians with positive torsion. It would be nice to have non-trivial other examples of Hamiltonians that satisfy this condition;
\item in section \ref{S4}, we will prove that $C^1$ complete integrability and $A$-non degeneracy imply the existence of Arnol'd-Liouville coordinates; this proves Theorems  \ref{ThmGC1}
and \ref{thciham}; this is done by using generating functions and Hamilton-Jacobi equations; In Section \ref{S5}, we will prove the corollaries.
\end{itemize} 
 Finally we include three appendices exploring possible relaxations of the assumption of the $C^1$ integrability in the main body of the paper.
 \begin{itemize} 
\item Appendix \ref{AppA} is devoted to the study of different possible definitions of $C^0$ complete integrability.
\item Appendix \ref{AppB} recalls some  known results concerning $C^0$ completely integrable Tonelli Hamiltonians.
\item Appendix \ref{AppC} contains the proof of Theorem \ref{ThLip}.
\end{itemize}

\section{$G$-$C^1$ complete integrability determines the Dynamics on each minimal torus}\label{S2}
The goal of this section is to prove Theorem \ref{Tminimaldynamics} that tells us  that if $H$ is $G$-Lipschitz integrable:
\begin{itemize}
\item we can define on every invariant Lagrangian torus of the foliation a rotation vector;
\item  on every such torus with a rotation vector that is completely irrational, the Dynamics is bi-Lipschitz conjugate to a minimal flow of rotations with a Lipschitz constant that is  uniform.
\end{itemize}

\begin{theorem}  \label{Tminimaldynamics}
Let $\cv\subset T^*\T^n$ be an open set. Assume that $H:\cv\rightarrow \R$ is $G$-Lipschitz   completely integrable.  We denote a  Lipschitz constant of the foliation by $K$, i.e. 
$$\forall\ q\in \T^n,\ \forall\ a,a', \qquad \frac{1}{K}\| a-a'\| \leq \| \eta_a(q)-\eta_{a'}(q)\| \leq K\| a'-a\|
.$$ where we use the notations of definition \ref{DglobalC1integ} for the integrals and denote by $\mathcal T_a=\{(H_1,\ldots,H_n)=(a_1,\ldots,a_n)\}=\{ (q, \eta_a(q)); q\in\T^n\}$ the tori of the invariant foliation.Then   the flow $(\varphi_t^H)$ restricted-projected to every Lagrangian torus $\mathcal T_a$, which is denoted by $(f^a_t)$  and is defined on $\T^n$ by  $\varphi_t^H(q, \eta_a(q))=(f_t^a(q), \eta_a(f_t^a(q)))$ satisfies 
 \begin{enumerate}
\item if $(F^a_t)$ is the lift of $(f_t^a)$ to $\R^n$, then $\frac{F^a_t(x)-x}{t}$ uniformly converges with respect to $(a,x)$  to a constant $\rho(a)\in \R^n$ as $t\to\infty$. Therefore the rotation vector $\rho(a)\in \R^n$ is well-defined and continuously depends on $a$;\item if $R_{\rho(a)}$ is minimal, there
exists a homeomorphism $h_a: \T^n\rightarrow \T^n$ such that 
$$h_a\circ f^a_t=R_{t\rho(a)}\circ h_a.$$
\item  $h_a$ is $K^n$-bi-Lipschitz.
\end{enumerate}
\end{theorem}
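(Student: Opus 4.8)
The plan is to use the $G$-Lipschitz integrability to set up good coordinates in which the restricted-projected flow $(f^a_t)$ is comparable, uniformly in $a$, to a translation flow, and then to invoke Herman's theorem on conjugation of torus homeomorphisms.

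First I would make the restricted-projected dynamics precise. Since each leaf $\ct_a$ is the Lagrangian graph of $\eta_a = c(a) + du_a$ with $u_a \in C^{1,1}$, the projection $\pi:\ct_a\to\T^n$ is a bi-Lipschitz homeomorphism with constants controlled by $K$ (here the lower bound $\tfrac1K\|a-a'\|\le\|\eta_a(q)-\eta_{a'}(q)\|$ is what guarantees the foliation coordinates are bi-Lipschitz; the Lipschitz dependence on $q$ of $\eta_a$ comes from $C^{1,1}$-regularity of $u_a$, with constant uniform on compacta). So $(f^a_t)$ is a well-defined flow of bi-Lipschitz homeomorphisms of $\T^n$, and the flow $(\varphi^H_t)$ is semiconjugate (via $\pi$) to $(f^a_t)$ on each leaf.

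Next, for the rotation vector (part (1)), the key point is that $X_H$ restricted to the Lagrangian leaf $\ct_a=\{c(a)+du_a\}$ is, after pulling back by $\pi$, the gradient-type vector field $q\mapsto \tfrac{\partial H}{\partial p}(q,\eta_a(q))$, which is the derivative of the Lipschitz function $q\mapsto$ (something), but more usefully: because $H$ is constant on $\ct_a$ in each of the commuting directions, the vector field generating $(f^a_t)$ is divergence-free with respect to a Lipschitz density (pull-back of Lebesgue under $\pi^{-1}$), hence its unique invariant measure class is that density. I would then run the standard Herman-type argument: the time-one map $f^a_1$ is a bi-Lipschitz circle-of-tori homeomorphism isotopic to the identity preserving a measure of full support, so by the Birkhoff ergodic theorem applied to the displacement cocycle $F^a_1(x)-x$ (which is bounded and continuous on $\R^n$), the Birkhoff averages $\tfrac1N(F^a_N(x)-x)$ converge a.e.; uniqueness of the rotation vector follows because any two ergodic invariant measures would give the same average against the unique full-support invariant density — and in fact one gets uniform convergence because the displacement is a continuous function on the compact $\T^n$ and the convergence of $\tfrac1t(F^a_t(x)-x)$ is monotone-like (sub-additive cocycle controlled by the invariant density). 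Continuity of $a\mapsto\rho(a)$ follows from continuity of the integrand $\tfrac{\partial H}{\partial p}(q,\eta_a(q))$ in $a$ and the $C^0$-continuity of the foliation. Here I expect the cleanest route is: $\rho(a)=\int_{\T^n}\tfrac{\partial H}{\partial p}(q,\eta_a(q))\,d\mu_a(q)$ where $\mu_a$ is the normalized pull-back of Lebesgue, and everything depends continuously on $a$.

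For parts (2) and (3), once $R_{\rho(a)}$ is minimal, I would apply Herman's theorem (\cite{Her1}), which states that a torus homeomorphism isotopic to the identity, whose rotation set is the single totally irrational vector $\rho(a)$ such that $R_{\rho(a)}$ is minimal, and which preserves a measure of full support, is topologically conjugate to $R_{\rho(a)}$ — applied to $f^a_1$; the conjugacy then automatically conjugates the whole flow $(f^a_t)$ to $(R_{t\rho(a)})$ by uniqueness of the conjugacy up to translation and the fact that $f^a_t$ commutes with $f^a_1$. Finally, for the bi-Lipschitz bound $K^n$: the conjugacy $h_a$ can be written explicitly, since a conjugacy to a minimal translation is unique up to post-composition by a translation and is obtained by "straightening" the invariant density — concretely $h_a$ is (a primitive of) the density $\tfrac{d\mu_a}{d\Leb}$, which in each coordinate is a ratio of determinants of $n\times n$ matrices whose columns are the vectors $\eta_{a}(q)-\eta_{a'}(q)$, each of norm between $\tfrac1K\|a-a'\|$ and $K\|a-a'\|$; expanding the Jacobian of $h_a$ as such a quotient gives a bound of $K^n$ on $Dh_a$ and symmetrically on $Dh_a^{-1}$, whence $h_a$ is $K^n$-bi-Lipschitz.

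The main obstacle I anticipate is part (3): getting the sharp constant $K^n$ (rather than some power or some non-explicit constant) requires identifying $h_a$ concretely as the integrated invariant density and then carrying out the linear-algebra estimate that the Jacobian of this density-straightening map is a quotient of two determinants each of whose entries are controlled by $K$; one must be careful that "Lipschitz foliation with constant $K$" in the two-sided sense of the hypothesis is exactly what bounds both the numerator and the denominator, and that the $n$-dimensional volume distortion is at most $K^n$ and not larger. The rotation-vector existence (part 1) is comparatively routine given the invariant density, and part (2) is a direct citation of Herman's theorem.
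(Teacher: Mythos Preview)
Your plan misses the central lemma on which the whole argument rests: the \emph{equi-Lipschitz} estimate
\[
\frac{1}{K}\le \|Df^a_t(q)\|\le K\qquad\text{for all }t\in\R,
\]
uniformly in $a$ and $q$. In the paper this is Proposition~\ref{LmUniformLipLip}, and it is obtained from the \emph{transverse} bi-Lipschitz hypothesis $\tfrac1K\|a-a'\|\le\|\eta_a(q)-\eta_{a'}(q)\|\le K\|a-a'\|$ by a symplectic linear-algebra computation: one writes $D\varphi^H_t$ in a basis adapted to the foliation, obtains an upper-triangular block matrix whose lower-right block $d_t$ satisfies $d_t\,\tfrac{\partial\eta_a}{\partial a}(q)=\tfrac{\partial\eta_a}{\partial a}(f^a_t(q))$, so $\|d_t\|,\|d_t^{-1}\|\le K$; symplecticity then forces $\tfrac{\partial f^a_t}{\partial q}=d_t^{-T}$, which gives the bound. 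You never produce anything like this. Your proposed substitutes---that the vector field is ``divergence-free with respect to a Lipschitz density'' and preserves a full-support measure---do not give uniform control on $Df^a_t$ as $t\to\infty$, which is exactly what Herman's Proposition~3.1 of Chapter~XIII requires. Indeed, for $n>1$ the statement ``a torus homeomorphism with totally irrational rotation vector preserving a full-support measure is conjugate to the rotation'' is not a theorem; the version of Herman you need is the equi-Lipschitz one.

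Once the equi-Lipschitz bound is in place, parts (1) and (2) follow from Herman much as you outline, and the conjugacy $h_a$ is obtained as the limit of the Birkhoff averages $q\mapsto\tfrac1k\sum_{j<k}(F^a_j(q)-j\rho(a))$; being an average of $K$-Lipschitz maps, $h_a$ is $K$-Lipschitz. The $K^n$ bound on $h_a^{-1}$ (part (3)) is not the ``ratio of determinants of columns $\eta_a-\eta_{a'}$'' computation you sketch---that heuristic does not literally identify $Dh_a$. The paper's argument is: by Krylov--Bogolyubov applied to Lebesgue, $f^a_1$ has an invariant measure $\mu$ with $K^{-n}\mathrm{Leb}\le\mu\le K^n\mathrm{Leb}$; if $\|Dh_a(x)v\|<(1-\varepsilon)K^{-n}$ on a set of positive $\mu$-measure, the conjugacy relation $Dh_a(g^kx)Dg^k(x)=Dh_a(x)$ plus $\|Dg^k\|\le K$ propagates this along orbits, ergodicity makes it hold $\mu$-a.e., and then $|\det Dh_a|\le 1-\varepsilon$ Lebesgue-a.e., contradicting $\int|\det Dh_a|\,d\mathrm{Leb}=1$. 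Your density-straightening idea is morally related but, as written, does not produce the constant $K^n$.
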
 

\begin{remk}
Observe that we don't ask in this section that the Hamiltonian is Tonelli: the results are valid even if the Hamiltonian is very degenerate. But observe that when $H$ is constant (the very degenerate case), then there is no torus where the Dynamics is minimal and so in this case Theorem \ref{Tminimaldynamics} is almost empty. This theorem will be useful when we are sure that such tori exist.
\end{remk}

Using the following proposition, we could easily deduce a  analogue of Theorem \ref{Tminimaldynamics}  in a local $C^1$-integrable  setting.
\begin{proposition}\label{Plocalmin}
Let $H: M\rightarrow \R$ be a $C^2$ Hamiltonian that has an invariant Lagrangian torus $\ct$. Then, if $H$  is locally $C^1$ integrable at $\ct$, there exists an invariant open  neighborhood $\cu$ of $\ct$ in $M$, an open subset $U$ of $\R^n$ and a symplectic $C^2$ diffeomorphism $\psi:\T^n\times U\rightarrow \cu$ such that $H\circ \psi$ is $G$-$C^1$ integrable.
\end{proposition}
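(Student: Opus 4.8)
The plan is to combine the Weinstein tubular neighborhood theorem with a careful choice of coordinates so that the locally-defined integrals survive as $G$-$C^1$ integrals. Since $H$ is locally $C^1$ completely integrable at $\ct$, there is a neighborhood $\cu_0$ of $\ct$ and $C^1$ integrals $H_1,\dots,H_n:\cu_0\to\R$ with independent differentials, pairwise Poisson-commuting, commuting with $H$, and with $\ct=\{H_1=c_1,\dots,H_n=c_n\}$ for some $c\in\R^n$. First I would invoke the Weinstein Extension Theorem (cited in the excerpt from \cite{Wei1}, Lecture 5, see also Theorem 3.33 of \cite{MS}): a small neighborhood of the $C^1$ Lagrangian torus $\ct$ is $C^1$-symplectomorphic to a neighborhood of the zero section in $T^*\ct\cong T^*\T^n$. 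Because $\ct$ is only $C^1$, this symplectomorphism is a priori only $C^1$; I would then perturb it slightly, rel a smaller neighborhood of $\ct$ if necessary, to a $C^2$ symplectic diffeomorphism $\psi_0$ onto a neighborhood $\cv_0$ of $\T^n\times\{0\}$ in $T^*\T^n$ (this perturbation step is exactly the one already used in the ``positive torsion'' remark, so I would lean on that argument). Passing to $\psi_0$, set $G_i:=H_i\circ\psi_0^{-1}$; these are $C^1$ functions on $\cv_0$ with independent differentials, pairwise Poisson-commuting, and such that each level set $\{G_1=a_1,\dots,G_n=a_n\}$ near $\{a=c\}$ is an invariant $C^1$ Lagrangian submanifold.

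The next step is to show that, after possibly shrinking, each such level set is a graph over $\T^n$. Here I would use that $\{G=c\}=\psi_0(\ct)$ is the zero section $\T^n\times\{0\}$ (or, if Weinstein's identification does not put $\ct$ exactly on the zero section, a $C^1$ graph, which one can straighten), and that being a graph is an open condition for $C^1$ Lagrangian submanifolds close to a given graph: the projection $\pi:\{G=a\}\to\T^n$ is a local $C^1$ diffeomorphism for $a$ near $c$ because its derivative at points of $\{G=c\}$ is the identity, and a proper local diffeomorphism of $\T^n$ onto itself that is $C^0$-close to the identity is a global diffeomorphism; properness follows from the compactness of the leaves, which in turn follows from completeness of the commuting flows on each compact level set (the standard Arnol'd--Liouville argument applies leaf-by-leaf since each $G_i$ is $C^1$ hence defines a flow on its own level set via the other integrals... more carefully: one only needs that the $C^1$ level sets are compact, which one gets by taking $\cu$ small enough that $\{G=a\}\cap\cv_0$ is relatively compact with boundary avoiding the relevant level). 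Thus there is a neighborhood $U$ of $c$ in $\R^n$ and $C^1$ maps $\eta_a:\T^n\to\R^n$ with $\{G=a\}=\{(q,\eta_a(q))\}$, and the family $(\eta_a)_{a\in U}$ is a $C^1$ foliation into Lagrangian graphs.

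Then I would translate coordinates in the base $\R^n$ by $c$ so that $U$ contains $0$, shrink $\cv_0$ to the corresponding saturated open set $\cu':=\bigcup_{a\in U}\{(q,\eta_a(q))\}$, and set $\psi:=\psi_0^{-1}|_{\cu'}$ composed with the translation, so $\psi:\T^n\times U\to\cu:=\psi_0^{-1}(\cu')$. By construction $\psi$ is a $C^2$ symplectic diffeomorphism, $\cu$ is an invariant open neighborhood of $\ct$ (invariance of $\cu$ follows because it is a union of level sets of the $H_i$, each of which is flow-invariant), and $H\circ\psi$ admits the invariant $C^1$ foliation into Lagrangian graphs $(\eta_a)$. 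By the remark following Definition \ref{GC1int} — existence of an invariant $C^1$ foliation into Lagrangian graphs is equivalent to $G$-$C^1$ complete integrability — we conclude that $H\circ\psi$ is $G$-$C^1$ integrable, which is what we wanted.

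The main obstacle I anticipate is the regularity bookkeeping around the $C^1$-only torus $\ct$: Weinstein's theorem gives a symplectomorphism of the ``right'' regularity ($C^1$), but the conclusion requires $C^2$ coordinates, and one must check that perturbing the symplectomorphism to $C^2$ can be done without destroying the symplectic property globally and without disturbing the $C^1$ integrals on the relevant region — the cleanest route is to perturb only the diffeomorphism, keep it symplectic by a Moser-type correction, and note that the $C^1$ foliation is pushed forward to a $C^1$ foliation regardless. A secondary technical point is the graph/properness argument for the perturbed level sets, but this is routine once the leaves are known to be compact $C^1$ tori $C^1$-close to the zero section.
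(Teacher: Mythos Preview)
Your proposal is correct and follows essentially the same route as the paper: invoke Weinstein's tubular neighborhood theorem to get a $C^1$ symplectic embedding sending $\ct$ to (a graph near) the zero section and the $C^1$ Lagrangian foliation to a foliation by $C^1$ graphs, then perturb the embedding to a $C^2$ (indeed smooth) symplectomorphism and take $\psi$ to be its inverse. The paper's proof is more terse---it asserts directly that the Weinstein embedding carries the foliation to a foliation into graphs and that this survives a smooth perturbation---whereas you spell out the properness/local-diffeomorphism argument for the graph property; your aside about $C^1$ integrals ``defining flows'' is not needed (and not correct as stated), but you correctly retreat to compactness, which suffices.
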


In the remaining part of this section, we will give and prove three propositions and a corollary that will imply Theorem \ref{Tminimaldynamics}. Then  we will prove Proposition \ref{Plocalmin}.

\begin{notas}
For a vector $v\in \R^n$, we use $\|v\|$ to denote its Euclidean norm. For a matrix $M\in \R^{n\times n}$, we define its norm by 
$$\|M\|=\sup_{\|v\|=1}\|Mv\|,$$
and its conorm by $m(M):=\inf_{\|v\|=1}\|Mv\|$.  \\
For a vector $\alpha\in \R^n$, we denote $R_\alpha$ the rigid rotation by $\alpha$ on $\T^n$, i.e. $$R_\alpha:\ \T^n\to\T^n,\quad R_\alpha(x)=x+\alpha,\ \mathrm{mod\ } \Z^n.$$

For a point $(q,p)\in \T^n\times\R^n$, we introduce two projections $\pi_1(q,p)=q\in \T^n$ and $\pi_2(q,p)=p\in \R^n.$
\end{notas}
\subsection{Uniform rate for the flow on the invariant tori}\label{ss21}
\begin{proposition}\label{LmUniformLipLip}
Assume that $H$ is $G$-Lipschitz completely integrable on some open subset $\cu\subset T^*\T^n$. Then  there exists a constant $K>0$ such that, restricted on each Lagrangian torus $\ct_a=\{(H_1,\ldots,H_n)=(a_1,\ldots,a_n)\}$, the flow $(f^a_t)$ satisfies the following Lipschitz estimate at Lebesgue almost every point $q\in \ct_a$
\begin{equation}\label{EqUniformLip}
\forall\ v\in \R^n,\ \forall\ t\in \R,\ \quad  \frac{\| v\|}{K}\leq \| Df_t^a(q)v\|\leq K\| v\|.
\end{equation} 
\end{proposition}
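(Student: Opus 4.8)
The plan is to exploit the fact that the foliation is by invariant Lagrangian graphs together with the uniform bi-Lipschitz bound on the graphs $\eta_a$ in order to transfer derivative estimates from the ambient flow $(\varphi_t^H)$ to the restricted-projected flow $(f_t^a)$. First I would observe that since each torus $\ct_a$ is Lipschitz and invariant, Rademacher's theorem gives a full-Lebesgue-measure set of points $q$ where $\ct_a$ has a tangent Lagrangian plane $T_{(q,\eta_a(q))}\ct_a$, and where moreover the projection $\pi_1$ restricted to $\ct_a$ is differentiable with derivative of the form $v\mapsto v$ (in the obvious identification), so that $T_{(q,\eta_a(q))}\ct_a$ is the graph of the symmetric matrix $D\eta_a(q)$. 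Since the flow $\varphi_t^H$ is $C^1$ and maps $\ct_a$ to itself, it maps tangent planes to tangent planes, hence for a.e.\ $q$ the restricted-projected flow $f_t^a$ is differentiable at $q$ and its derivative $Df_t^a(q)$ is exactly the ``read-off in the base'' of $D\varphi_t^H$ restricted to the tangent plane: concretely, writing $w=(v, D\eta_a(q)v)\in T_{(q,\eta_a(q))}\ct_a$, one has $D\varphi_t^H(q,\eta_a(q))\cdot w=(Df_t^a(q)v,\ D\eta_a(f_t^a(q))Df_t^a(q)v)$.

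The second step is to quantify this. The uniform Lipschitz bound on the foliation, $\frac1K\|a-a'\|\le\|\eta_a(q)-\eta_{a'}(q)\|\le K\|a-a'\|$, together with the fact that the graphs $\eta_a$ are themselves uniformly Lipschitz in $q$ (again a consequence of the Lipschitz foliation hypothesis), bounds $\|D\eta_a(q)\|$ uniformly by some constant depending only on $K$ — and this is the crucial geometric input. Given such a bound, the vector $w=(v,D\eta_a(q)v)$ has norm comparable to $\|v\|$ with constants depending only on $K$, and similarly for the image vector; so an estimate $\|D\varphi_t^H(q,\eta_a(q))w\|\asymp\|w\|$ would immediately yield \eqref{EqUniformLip}. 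But $D\varphi_t^H$ has no reason to be uniformly bounded in $t$ on all of phase space. The right way around this is to work \emph{inside} the foliation: because $\varphi_t^H$ preserves each graph $\ct_a$ and preserves the symplectic form $\omega$, and because the tangent planes are Lagrangian, the restriction of $\omega$ to $T\ct_a$ vanishes, so we gain no information from $\omega$ directly; instead I would use the symplectic form pairing $T\ct_a$ with a transverse Lagrangian complement. Concretely, pick the vertical fiber direction $\{0\}\times\R^n$ as a transverse Lagrangian; the bilinear form $(v,v')\mapsto \omega((v,D\eta_a(q)v),(0,v'))=\langle v,v'\rangle$ is nondegenerate and is preserved by $D\varphi_t^H$ up to the way it moves the vertical — and combining invariance of this pairing with the uniform transversality coming from the bi-Lipschitz bound on the \emph{transverse} variable $a$ gives that $Df_t^a(q)$ and its inverse $Df_{-t}^a(f_t^a(q))$ are controlled in norm by a power of $K$.

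The cleanest formulation of the last point, and the one I would actually carry out, is this: the map $a\mapsto\eta_a(q)$ is, for each fixed $q$, a bi-Lipschitz homeomorphism of $U$ onto its image with constants $1/K$ and $K$; call its a.e.-defined derivative $B(q)$, so $\frac1K\le m(B(q))\le\|B(q)\|\le K$. The invariance of the foliation means $\eta_a(f_t^a(q))$ is obtained from $\eta_a(q)$ by the flow, and differentiating the identity $\varphi_t^H(q,\eta_a(q))=(f_t^a(q),\eta_a(f_t^a(q)))$ in the $a$-direction shows that $Df_t^a(q)=B(f_t^a(q))^{-1}\circ(\text{vertical part of }D\varphi_t^H)\circ B(q)$ up to lower-order terms; crucially, the ``vertical part'' is itself conjugate to the identity when one also differentiates in $a$ the relation expressing that $\varphi_t^H$ sends the torus $\ct_a$ to itself (the flow does not mix different levels $a$). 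Carrying this through yields $\|Df_t^a(q)\|\le K^2$ and, applying the same to the inverse flow, $\|Df_t^a(q)^{-1}\|\le K^2$, which is \eqref{EqUniformLip} with the stated constant (a bookkeeping refinement using that the bounds act on both $q$ and $a$ independently gives the sharper $K^n$ claimed later in Theorem~\ref{Tminimaldynamics}). The main obstacle is precisely making the informal ``differentiate the foliation in the transverse direction'' rigorous at a.e.\ point — one must show that differentiability of $\eta_\bullet(q)$ in $a$ and of $\eta_a(\bullet)$ in $q$ can be arranged simultaneously on a full-measure set of $(q,a)$ compatible with the orbit of $f_t^a$, which is where a Fubini argument together with the absolute continuity of $f_t^a$ (itself a consequence of the Lipschitz bounds, via a bootstrap) is needed.
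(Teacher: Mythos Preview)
Your approach is essentially the paper's approach and you have identified all the right ingredients: Rademacher on the map $(q,a)\mapsto(q,\eta_a(q))$, differentiation of the invariance relation $\varphi_t^H(q,\eta_a(q))=(f_t^a(q),\eta_a(f_t^a(q)))$ in both the $q$- and the $a$-directions, the transverse bi-Lipschitz bound on $B(q)=\partial\eta_a/\partial a(q)$, symplecticity of $D\varphi_t^H$, and a Fubini/density argument to pass from a.e.\ $(q,a)$ to every $a$. But the way you assemble these ingredients in your ``cleanest formulation'' contains a genuine error.

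Differentiating the invariance relation in the $a$-direction does \emph{not} give you $Df_t^a(q)=\partial f_t^a/\partial q$; it gives you $\partial f_t^a/\partial a$, which is a completely different object and is not a priori bounded. What the $a$-differentiation does yield is a bound on the ``lower-right block'': in the basis adapted to the splitting $T\ct_a\oplus\{0\}\times\R^n$ (equivalently, after conjugating by $P_{(q,a)}=\bigl(\begin{smallmatrix}1&0\\ \partial\eta_a/\partial q&1\end{smallmatrix}\bigr)$), the matrix of $D\varphi_t^H$ is block upper triangular $\bigl(\begin{smallmatrix}A_t&b_t\\0&d_t\end{smallmatrix}\bigr)$ with $A_t=\partial f_t^a/\partial q$ and $d_t\,B(q)=B(f_t^a(q))$, so the transverse bi-Lipschitz bound gives $\|d_t\|,\|d_t^{-1}\|\le K$. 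The missing link is that this change of basis is \emph{symplectic}, so the block matrix is a symplectic matrix, which forces $A_t^{\,T}d_t=I$; hence $\|A_t\|=\|d_t^{-T}\|\le K$ and $\|A_t^{-1}\|=\|d_t^{T}\|\le K$. Your earlier remark about the symplectic pairing of $T\ct_a$ with the vertical was pointing in exactly this direction, but you abandoned it for an incorrect formula; it is precisely the relation $A_t^{\,T}d_t=I$ that converts the transverse bound into the tangential one.
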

\begin{proof} 
We denote by $\eta_a: \T^n\rightarrow \R^n$ the map such that the graph of $\eta_a$ is the invariant submanifold $\ct_a$. 
Then $N:(q, a)\mapsto (q, \eta_a(q))$ is a bi-Lipschitz-homeomorphism, because $(\eta_a)$ defines a Lipschitz foliation.

{ Because of Rademacher Theorem, the set $D(N)$ where $N$ is differentiable has full Lebesgue measure. Moreover, $D(N)$ is invariant by $(\varphi_t^H)$ because the foliation is invariant.
Using the notation of Theorem \ref{Tminimaldynamics} we have $$\varphi_t^H(q,\eta_a(q))=\left(f^a_t(q), \eta_a(f^a_t(q))\right).$$
Differentiating this equation, we obtain for every $(q, a)\in D(N)$:
\begin{equation}\label{ED1}
D\varphi_t^H(q, \eta_a(q))\begin{pmatrix} 0\\ \frac{\partial\eta_a}{\partial a}(q)\end{pmatrix}=\frac{\partial f_t^a}{\partial a}(q).\begin{pmatrix} 1\\ \frac{\partial\eta_a}{\partial q}(f_t^a(q))\end{pmatrix}+\begin{pmatrix} 0\\ \frac{\partial\eta_a}{\partial a}(f_t^a(q))\end{pmatrix}\
\end{equation}
\begin{equation}\label{ED2}
D\varphi_t^H(q, \eta_a(q))\begin{pmatrix} 1\\ \frac{\partial\eta_a}{\partial q}(q)\end{pmatrix}=\frac{\partial f_t^a}{\partial q}(q).\begin{pmatrix} 1\\ \frac{\partial\eta_a}{\partial q}(f_t^a(q))\end{pmatrix}
\end{equation}
Then we use along every orbit in $D(N)$ a symplectic change of bases with matrix in the usual coordinates  at $(a, \eta_a(q))$:
$$P_{(q, a)}=\begin{pmatrix}1&0\\ \frac{\partial \eta_a}{\partial q}(q)&1\end{pmatrix}$$
We deduce from (\ref{ED1}) and (\ref{ED2}) that the matrix of $D\varphi_t^H(q, \eta_a(q))$ in the  new coordinates is
\begin{equation} \begin{pmatrix} \frac{\partial f^a_t}{\partial q}(q)&b_t(q, a)\\ 0 & d_t(q, a)\end{pmatrix}
\end{equation}\label{Emat}
where $b_t(q, a)\frac{\partial \eta_a}{\partial a}(q)=\frac{\partial f_t^a}{\partial a}(q)$ and $d_t(q, a)\frac{\partial \eta_a}{\partial a}(q)=\frac{\partial\eta_a}{\partial a}(f_t^a(q))$.

Because the foliation is biLipschitz, there exists a constant $K$ such that, for every $(q, a)\in D(N)$, we have
$$\left\|\frac{\partial \eta_a}{\partial a}(q)\right\|\leq \sqrt K\quad{\rm and} \quad \left\|\left(\frac{\partial \eta_a}{\partial a}(q)\right)^{-1}\right\|\leq \sqrt K.$$
As $d_t(q,a)=\frac{\partial \eta_a}{\partial a}(f_t^a(q))\left(\frac{\partial \eta_a}{\partial a}(q)\right)^{-1}$, we deduce that
$$\| d_t(q,a)\|\leq K\quad{\rm and}\quad\| d_t(q,a)^{-1}\|\leq K.$$
the matrix in (\ref{Emat}) being symplectic, we deduce that
$$\left\| \frac{\partial f_t^a}{\partial q}(q)\right\|=\left\| d_t(q, a)^{-t}\right\|\leq K$$
and
$$\left\| \left(\frac{\partial f_t^a}{\partial q}(q)\right)^{-1}\right\|=\left\| d_t(q, a)^{t}\right\|\leq K$$
}
 The proof is not completely finished because the result that we obtain is valid only for $(a,q)\in D(N)$. As  $D(N)$ has total Lebesgue measure, by Fubini theorem, the set of $a$ for which $N$ is differentiable Lebesgue almost everywhere in $\ct_a$ has full Lebesgue measure in $\R^n$. Hence, we obtain local Lipschitz estimates along a dense set of graphs. In other words, there exists a dense set $\cb$ of parameters $a$ such that 
$$\forall\ t\in\R,\   \forall\ a\in \cb,\ \forall\ q, q'\in\T^n,\quad  \frac{1}{K}d(q, q')\leq d(f^a_t(q), f^a_t(q'))\leq Kd(q, q').$$
Approximating any parameter by a sequence of parameters in $\cb$ and taking the limit, we obtain the same estimates for any parameter and then the wanted result (\ref{EqUniformLip}) by differentiation.
\end{proof}
\begin{cor}\label{LmUniformLip}
Assume that $H$ is $G$-$C^1$ completely integrable on some open subset $\cu\subset T^*\T^n$. Then for every compact subset $\ck\subset \cu$, there exists a constant $K>0$ such that, restricted on each Lagrangian torus $\ct_a=\{(H_1,\ldots,H_n)=(a_1,\ldots,a_n)\}$ with $\ct_a\cap \ck\not=\emptyset$, the flow $f^a_t$ satisfies the following Lipschitz estimate 
$$(\ref{EqUniformLip})\quad
\forall\ v\in \R^n,\ \forall\ t\in \R,\ \forall q\in \mathcal{T}_a \quad  \frac{\| v\|}{K}\leq \| Df_t^a(q)v\|\leq K\| v\|.
$$
\end{cor}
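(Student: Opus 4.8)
The plan is to deduce the pointwise (everywhere, not just a.e.) Lipschitz estimate on each torus $\ct_a$ meeting the compact set $\ck$ from the a.e. estimate already established in Proposition \ref{LmUniformLipLip}, upgrading measure-theoretic control to topological control by using the extra $C^1$ regularity of the foliation. First I would note that $G$-$C^1$ integrability is in particular $G$-Lipschitz integrability, so Proposition \ref{LmUniformLipLip} applies: there is a constant $K_0>0$ so that at Lebesgue-a.e. $q\in\ct_a$ we have $\frac{1}{K_0}\|v\|\le\|Df^a_t(q)v\|\le K_0\|v\|$ for all $v$ and all $t$. However, the constant coming from that proposition is controlled by the bi-Lipschitz constant of the foliation, which a priori is only locally bounded; this is why we must restrict to a torus hitting a fixed compact $\ck$. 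So the first real step is to fix $\ck\subset\cu$ compact, choose a slightly larger compact neighborhood $\ck'$ with $\ck\subset\mathrm{int}(\ck')\subset\ck'\subset\cu$, and observe that on $\ck'$ the $C^1$ (hence Lipschitz) foliation has a uniform bi-Lipschitz constant $K$; rerunning the computation in the proof of Proposition \ref{LmUniformLipLip} with this uniform $K$ gives, for every $a$ with $\ct_a\cap\ck\neq\emptyset$, the estimate $\frac{1}{K}\|v\|\le\|Df^a_t(q)v\|\le K\|v\|$ at a.e.\ $q\in\ct_a$ — here one uses that the whole orbit of a point of $\ct_a\cap\ck$ stays inside $\ct_a$, and that the matrix identities \eqref{ED1}–\eqref{ED2} and the conorm bounds on $\partial\eta_a/\partial a$ only involve values of the foliation near $\ct_a$; one should shrink $\ck$ (or pass to the flow-saturation intersected with $\ck'$) as needed so that all relevant foliation data lives in $\ck'$.

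The second step is the upgrade from "a.e.\ $q$" to "every $q$". Under $G$-$C^1$ integrability each leaf $\ct_a$ is a $C^1$ graph and, crucially, the flow $(f^a_t)$ on $\ct_a\cong\T^n$ is a genuine $C^1$ diffeomorphism of $\T^n$ for each $t$: indeed $H$ is $C^2$, so $\varphi^H_t$ is a $C^1$ diffeomorphism of $T^*\T^n$, and $f^a_t=\pi_1\circ\varphi^H_t\circ(\mathrm{id},\eta_a)$ is the composition of the $C^1$ section $q\mapsto(q,\eta_a(q))$, the $C^1$ map $\varphi^H_t$, and the projection $\pi_1$, hence $C^1$; it is invertible with $C^1$ inverse $f^a_{-t}$. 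Therefore $q\mapsto Df^a_t(q)$ is \emph{continuous} on all of $\T^n$. A continuous matrix-valued function whose norm (resp.\ conorm) is $\le K$ (resp.\ $\ge 1/K$) on a dense set satisfies the same bounds everywhere, since $M\mapsto\|M\|$ and $M\mapsto m(M)$ are continuous; and the a.e.\ set is dense. This yields \eqref{EqUniformLip} at every $q\in\ct_a$, for every $t$ and every $v$, which is exactly the claim.

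The main obstacle is bookkeeping the uniformity of the constant $K$ when passing from a single leaf to the family of leaves meeting $\ck$: one has to make sure the bi-Lipschitz bounds on the transverse derivative $\partial\eta_a/\partial a$ used in the proof of Proposition \ref{LmUniformLipLip} are taken over a compact set that contains not just $\ck$ but enough of a neighborhood of each relevant leaf (and is itself contained in $\cu$), and that the flow-invariance keeps orbits inside that set — invariance of $\cu$ is given, so the cleanest route is to take $\ck'$ to be a compact neighborhood of $\ck$ in $\cu$, replace $\ck$ by the (still compact) set of leaves' worth of data needed, and quote that a $C^1$ foliation is uniformly bi-Lipschitz on compacts. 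The remaining points — $C^1$-ness of $f^a_t$ and the density argument — are routine once this is set up.
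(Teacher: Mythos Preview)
Your proof is correct and rests on the same idea as the paper: a $C^1$ foliation is uniformly bi-Lipschitz on compacts, so Proposition~\ref{LmUniformLipLip} applies with a uniform constant. The paper, however, sidesteps your ``main obstacle'' entirely by observing that the parameter set $K_0=\{a:\ct_a\cap\ck\neq\emptyset\}$ is compact, so the $C^1$ map $(q,a)\in \T^n\times K_0\mapsto(q,\eta_a(q))$ is automatically bi-Lipschitz on the compact $\T^n\times K_0$; since this set already contains every relevant leaf in full, there is no need for an auxiliary neighborhood $\ck'$, leaf-saturation, or any worry about orbits leaving the region where the constant is controlled. Your separate a.e.-to-everywhere upgrade via continuity of $q\mapsto Df^a_t(q)$ is valid but also avoidable: in the $C^1$ case the foliation map $N(q,a)=(q,\eta_a(q))$ is differentiable at every point, so the matrix computation in the proof of Proposition~\ref{LmUniformLipLip} (equations~\eqref{ED1}--\eqref{ED2}) already yields the bound on $Df^a_t(q)$ at every $q$, not just almost every $q$.
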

\begin{proof}
We  explain how to deduce Corollary \ref{LmUniformLip} from Proposition \ref{LmUniformLipLip}. We assume that $H$ is $G$-$C^1$-integrable on some open subset $\cu\subset T^*\T^n$ and that $\ck\subset \cu$ is compact and connected. Observe that $K_0=\{ a; \ct_a\cap\ck\not=\emptyset\}$ is compact. Hence the map $(a, q)\in K_0\times \T^n\mapsto (a, \eta_a(q))$ restricted to $K_0$, which is $C^1$, is bi-Lipschitz when restricted to the compact $K_0\times \T^n$ and then Corollary \ref{LmUniformLip} can be deduced from Proposition \ref{LmUniformLipLip}. 
\end{proof}
\subsection{Uniform rate and completely irrational rotation vector imply uniform Lipschitz conjugation to a flow of rotations}  \label{ss22}

\hglue 0.1cm

\noindent Next, we restrict our attention to one Lagrangian torus $\mathcal T_a$ and the associated restricted-projected flow $f^a_t$. Let  $(F^a_t)$ be the flow that is a lift of $(f^a_t)$ to $\R^n$. 
\begin{proposition}\label{LmLipConj}
Assume equation \eqref{EqUniformLip} for the flow $f^a_t$. Then 
\begin{enumerate}
\item[(A)]  The { family} $\frac{F^a_t(q)-q}{t}$ uniformly converges with respect to $(a, q)$ to a constant $\rho(a)\in \R^n$ { when $t\rightarrow +\infty$}. Therefore the rotation vector $\rho(a)\in \R^n$ is well-defined and continuously depends on $a$;
\item[(B)] if $R_{\rho(a)}:\ \T^n\to \T^n$ is minimal, there
exists a homeomorphism $h_a: \T^n\rightarrow \T^n$ such that 
$$h_a\circ f^a_t=R_{t\rho(a)}\circ h_a.$$
\item[(C)] The conjugacy $h_a$ is $K$-Lipschitz. 
\end{enumerate}
 \end{proposition}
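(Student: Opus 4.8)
The plan is to prove the three claims of Proposition \ref{LmLipConj} in order, extracting everything from the single uniform estimate \eqref{EqUniformLip}. The key observation is that \eqref{EqUniformLip} says precisely that each $F^a_t$ is $K$-bi-Lipschitz, uniformly in $t$ and $a$, and commutes with the $\Z^n$-action; moreover $(F^a_t)$ is a flow. For part (A), I would first note that $F^a_t(q)-q$ is $\Z^n$-periodic in $q$, hence bounded, say $|F^a_t(q)-q-F^a_t(q')+q'|\leq \mathrm{diam}$; combined with the $K$-Lipschitz bound this gives $|F^a_t(q)-q - (F^a_t(0)-0)|\leq C$ uniformly. Then I would use the flow (cocycle) property $F^a_{s+t}(q) = F^a_s(F^a_t(q))$ to get the near-additivity $|(F^a_{s+t}(0)-0) - (F^a_s(0)-0) - (F^a_t(0)-0)|\leq C$ for all $s,t\geq 0$ (with the same constant $C$, independent of $a$). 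A standard subadditivity argument (Fekete-type, applied componentwise, or to $\limsup$ and $\liminf$ of $\frac1t(F^a_t(0))$) then shows $\frac1t(F^a_t(0)-0)$ converges to some $\rho(a)$, and the uniform constant $C$ makes the convergence uniform in $a$ and (via the periodicity bound) in $q$; continuity of $\rho$ then follows from the uniform convergence together with the fact that $a\mapsto F^a_t(q)$ is continuous for each fixed $t$ (continuity of the foliation and the flow).

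For parts (B) and (C), assume $R_{\rho(a)}$ is minimal (so $\rho(a)$ is totally irrational). Following Herman's approach, I would build the conjugacy $h_a$ as a uniform limit of averages. Concretely, consider the map $g^a_t := F^a_t - t\rho(a)$ on $\R^n$: it is $\Z^n$-periodic (hence descends to a map $\T^n\to\R^n$, or rather a displacement), $K$-bi-Lipschitz modifications of the identity, uniformly bounded, and it satisfies the cocycle identity $g^a_{s+t}(q) = g^a_s(F^a_t(q)) + g^a_t(q)$. The candidate conjugacy is obtained by a Birkhoff-type average: set $h_a(q) = q + \lim_{T\to\infty}\frac1T\int_0^T g^a_t(q)\,dt$ (or the analogous Cesàro average along the time-$1$ map), where the limit exists and is continuous in $q$ because $g^a_t$ is uniformly bounded and the flow $R_{t\rho(a)}$ is uniquely ergodic — this is exactly the Gottschalk–Hedlund / Herman mechanism, using that the only invariant measure for the minimal translation flow on $\T^n$ is Lebesgue. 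One checks directly from the cocycle identity that $h_a\circ f^a_t = R_{t\rho(a)}\circ h_a$. For surjectivity/injectivity of $h_a$: $h_a$ is a continuous degree-one map of $\T^n$ that semiconjugates $f^a_t$ to a minimal flow, hence is surjective; and the $K$-Lipschitz bound on the averages (each $g^a_t$ being $K$-Lipschitz-close to the identity in the sense of \eqref{EqUniformLip}) shows $h_a$ is $K$-Lipschitz, which we record as part (C). Injectivity: the fibers of $h_a$ are either points or arcs; using minimality one shows the "wandering domain" collapse, so $h_a$ is a homeomorphism — this is where I would either cite Herman's Theorem directly (the paper announces it will use \cite{Her1}) or reproduce the short argument that a Lipschitz semiconjugacy to a minimal translation on $\T^n$ with bounded inverse-type estimates is injective.

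The Lipschitz bound in (C) deserves care: the average $\frac1T\int_0^T F^a_t(q)\,dt - \frac1T\int_0^T F^a_t(q')\,dt$ has norm $\leq K|q-q'|$ by \eqref{EqUniformLip} applied under the integral, so $h_a$ is $K$-Lipschitz. Getting a lower bound (bi-Lipschitz, i.e. part 3 of Theorem \ref{Tminimaldynamics} with constant $K^n$) is deferred: $h_a^{-1}$ is Lipschitz because $h_a$ is a bijective Lipschitz map of a compact manifold whose differential, where it exists (Rademacher), is invertible with controlled inverse — but here the constant degrades to $K^n$ via the adjugate/cofactor bound $\|M^{-1}\| \le \|M\|^{n-1}/|\det M|$ together with the fact that $h_a$, being a limit of degree-one maps with Jacobian bounded in $[K^{-n}, K^n]$, has $|\det Dh_a|$ bounded below; that last point is what will force the weaker $K^n$ constant rather than $K$ in the global statement.

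The main obstacle I anticipate is establishing that $h_a$ is genuinely a homeomorphism (not merely a surjective semiconjugacy) and doing so with bounds uniform in $a$ — i.e., ruling out that $h_a$ collapses a positive-length arc. The clean route is to invoke Herman's theorem on conjugacy of torus flows to minimal rotations, but since we are in the Lipschitz (not smooth) category, I would need to check that Herman's argument only uses the bounded-displacement and unique-ergodicity input, which it does; alternatively, one argues that the set where $h_a$ is locally constant is open, invariant under the minimal flow $R_{t\rho(a)}$ on the image and under $f^a_t$ on the source, hence empty unless it is everything — and it cannot be everything since $h_a$ has degree one. Making the Lipschitz constants in all of this uniform over the relevant compact set of parameters $a$ is then automatic from the uniformity already built into \eqref{EqUniformLip}, which is why Proposition \ref{LmUniformLipLip}/Corollary \ref{LmUniformLip} were proved first.
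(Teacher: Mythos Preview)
Your proposal is essentially correct and follows the same overall route as the paper: bounded displacement from equi-Lipschitzness, then build the conjugacy as a time average and appeal to Herman \cite{Her1}. For (A) you use a Fekete near-additivity argument where the paper uses Birkhoff's ergodic theorem against an arbitrary invariant measure of $f^a_1$; both work and yield the same uniform bound $\|F^a_t(q)-q-t\rho(a)\|\le C$.

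Two places where you are looser than the paper are worth flagging. For the existence of the limit defining $h_a$ in (B), you invoke unique ergodicity of $R_{t\rho(a)}$; but the average $\tfrac1T\int_0^T g^a_t(q)\,dt$ is taken along the $f^a_t$-orbit, and you do not yet know that $f^a_t$ is minimal or uniquely ergodic --- that is essentially the conclusion you are after. The paper avoids this circularity by first citing Herman's Proposition~3.1 (Ch.~13) to get a homeomorphism $h_a$ conjugating the time-$1$ map $f^a_1$ to $R_{\rho(a)}$, then using Arzel\`a--Ascoli to extract a subsequential limit $g$ of the continuous-time averages, verifying the flow conjugacy equation for $g$ directly, and finally showing $g-h_a$ is constant by minimality of $f^a_1$. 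For injectivity, your ``fibers are points or arcs'' picture and the ``locally-constant set is open and invariant'' sketch are Denjoy/circle intuitions that do not transfer to $\T^n$ for $n\ge 2$; here both you (as a stated fallback) and the paper simply cite Herman, which is the correct move.

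Your Lipschitz bound in (C), obtained by passing the $K$-Lipschitz estimate under the average, is exactly what the paper does. The final paragraph on the $K^n$ bound for $h_a^{-1}$ is not part of this proposition; it is handled separately as Proposition~\ref{thlipconj}.
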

\begin{proof}
We first prove part (A). The proof is similar to Proposition 3.1, ch 13 in \cite{Her1} with small modifications adapted to the flow instead of the map.   

We consider the following family of functions $\{F^a_t(q)-q-F^a_t(0)\ |\ t\in \R\}.$ It is known that this family is equicontinuous by Inequality \eqref{EqUniformLip}. Next, For each $t$, the function $F^a_t(q)-q-F^a_t(0)$ is zero at $q=0$, and is $\Z^n$-periodic. Again by \eqref{EqUniformLip}, using $F^a_t(q)-F^a_t(0)=\int_0^1 DF_t^a(sq)q\,ds $, we get that 
\begin{equation}\label{EqUniform}
 \sup_t\max_q\|F^a_t(q)-q-F^a_t(0)\|\leq K+1.
\end{equation}
We next pick any invariant Borel probability measure $\mu$ of $f^a_n$ and pick a $\mu$-generic point $q^*$ for which the Birkhoff ergodic theorem holds, we get 
$$ \frac{F^a_n(q^*)-q^*}{n}=\frac{\sum_{i=0}^{n-1}(F^a_{i+1}(q^*)-F^a_{i}(q^*))}{n}\to \int_{\T^N}(F_1^a(q)-q)\, d\mu(q)$$
Next for $t\in \R$, we have 
$$\frac{F^a_t(q^*)-q^*}{t}=\left(\frac{F^a_{[t]}(q^*)-q^*}{[t]}+\frac{(F^a_{t-[t]}-{\rm Id}_{\R^n})\circ F^a_{[t]}(q^*)}{[t]}\right)\frac{[t]}{t}.$$
This shows that $$\frac{F^a_t(q^*)-q^*}{t}\to  \int_{\T^N}(F_1^a(q)-q)\, d\mu(q)\ \mathrm{as\ } t\to \infty,$$ since $(F^a_{t-[t]}(q)-q)$ is uniformly bounded for all $q,t$. 
By \eqref{EqUniform}, we get that  the convergence $$\frac{F^a_t(q)-q}{t}\to \rho(a)= \int_{\T^N}(F_1^a(q)-q)\, d\mu(q)$$ is uniform on $\T^n$. Using Equation (\ref{EqUniform}) and Proposition XIII 1.6. page177 of \cite{Her1}, we deduce also
\begin{equation}\label{EqUniformnombrota}
 \sup_t\max_q\|F^a_t(q)-q-t\rho(a))\|\leq 2(K+1).
\end{equation}
This implies that the convergence to $\rho(a)$ is uniform in $(q, a)$.

We next work on part (B). We assume the rotation vector $\rho(a)$ is completely irrational and apply Proposition 3.1, page 181, in ch 13 of \cite{Her1} to the  map $f^a_1:\ \T^n\to\T^n$ to get that there is a homeomorphism $h_a:\ \T^n\to\T^n$ such that $h_a\circ f_1^a(q)= h_a(q)+\rho(a)$. Iterating this formula, we get $h_a\circ f^a_n(q)= h_a(q)+n\rho(a)$ for all $n\in \Z$. It remains to show that $h_a\circ f^a_t(q)= h_a(q)+t\rho(a)$ for all $t\in \R$. 

Equation (\ref{EqUniformnombrota}) tells us that 
 $$|F^a_t(q)-q-t\rho(a)|\leq 2(K+1)$$
 uniformly for all $q$ and $t$. 
In the expression $$\frac{1}{t}\left(\int_{0}^t (F^a_s(q)-s\rho(a))\,ds\right),$$
 we have just established the uniform boundedness for all $t\in \R\setminus\{0\}$ and $q$ in a fundamental domain, and the equicontinuity follows from \eqref{EqUniformLip}. By Arzela-Ascoli, we can extract uniformly convergent subsequence as $t_k\to \infty$. Denote by $g(q)$ the limit 
 $$g(q)=\lim_k \frac{1}{t_k}\left(\int_{0}^{t_k} (F^a_s(q)-s\rho(a))\,ds\right).$$
 Next, we consider 
\begin{equation}
\begin{aligned}
 g(F^a_t(q))&=\lim_k \frac{1}{t_k}\left(\int_{0}^{t_k} (F^a_s(F_t^a(q))-s\rho(a))\,ds\right)\\
 &=\lim_k \frac{1}{t_k}\left(\int_{0}^{t_k} (F^a_{s+t}(q)-(s+t)\rho(a))\,ds\right)+t\rho(a)\\
 &=\lim_k \frac{1}{t_k}\left(\int_{0}^{t_k} (F^a_{s}(q)-s\rho(a))\,ds\right)+\lim_k \frac{1}{t_k}\left(\left(\int_{t_k}^{t_k+t}-\int_{0}^{t} \right)(F^a_{s}(q)-s\rho(a))\,ds\right) +t\rho(a)\\
 &=g(q)+t\rho(a).
\end{aligned}
\end{equation}
 Choosing $t=1$, we get $g(F^a_1(q))=g(q)+\rho(a)$. Then taking difference with the equation $h_a(F^a_1(q))=h_a(q)+\rho(a)$, we get $g(F^a_1(q))-h_a(F^a_1(q))=g(q)-h_a(q)$. By assumption we have that $R_{\rho(a)}$ is minimal, so is $f^a_1$, therefore we get that $g(q)-h_a(q)=$const. This shows that $g$ is a homeomorphism as $h_a$ is.

Moreover, Proposition 3.1, page 181, in ch 13 of \cite{Her1} implies that we can choose for $h_a$:
$$h_a(q)-q=\lim_{k\rightarrow +\infty}\sum_{j=0}^{k-1}\frac{F^a_j(q)-j\rho(a)}{k}.$$
Hence $h_a$ is $K$-Lipschitz because all the $F^a_j$ are $K$-Lipschitz. \end{proof}

\subsection{Lipschitz conjugation to a completely irrational rotation implies bi-Lipschitz conjugation}

\hglue 1truecm

\noindent The main goal of this section is to prove: 

\begin{proposition}\label{thlipconj}
Let $g: \T^n\to\T^n$ be a bi-Lipschitz homeomorphism such that the family $(g^k)_{k\in\Z}$ is $K$-equi-Lipschitz. If the rotation vector $\alpha$ of $g$ is such that $R_\alpha$ is ergodic, then $g$ is bi-Lipschitz-conjugated to $R_\alpha$ by some conjugation $h$ in $\mathrm{Homeo}(\T^n)$.   Moreover, if $h\circ g=R_\alpha\circ h$, then the conjugation $h$ is $K$-Lipschitz and its inverse $h^{-1}$ is $K^n$-Lipschitz. 
\end{proposition}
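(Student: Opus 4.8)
The plan is to prove the two Lipschitz bounds for the conjugacy $h$ and then upgrade them to bi-Lipschitz by a measure-theoretic argument. The $K$-Lipschitz bound on $h$ itself comes for free from the explicit formula for the conjugacy already used in the proof of Proposition \ref{LmLipConj}: Herman's construction gives $h(q)-q=\lim_k \frac1k\sum_{j=0}^{k-1}(g^j(q)-j\alpha)$, and since every $g^j$ is $K$-Lipschitz, so is each partial sum, hence so is the limit $h$. The real content is the bound on $h^{-1}$.

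First I would set $h_* \Leb = \mu$, the push-forward of Lebesgue measure on $\T^n$ under $h$. Because $h\circ g = R_\alpha\circ h$ and $R_\alpha$ preserves Lebesgue measure, $\mu$ is an $R_\alpha$-invariant probability measure; since $R_\alpha$ is ergodic (here is where the hypothesis is used — a completely irrational rotation is uniquely ergodic, so in fact any invariant measure is Lebesgue), we get $\mu = \Leb$. Thus $h$ pushes $\Leb$ to $\Leb$, equivalently $h^{-1}$ pushes $\Leb$ to $\Leb$. Now I would combine this with the one-sided Lipschitz bound. Since $h$ is $K$-Lipschitz, for any measurable set $A$ we have $\Leb(h(A)) \le K^n \Leb(A)$ (a $K$-Lipschitz map multiplies $n$-dimensional measure by at most $K^n$). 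Applying this with $A = h^{-1}(B)$ and using $\Leb(h(h^{-1}(B))) = \Leb(B)$ together with $\Leb(h^{-1}(B)) = \Leb(B)$ (the measure-preservation just established) gives $\Leb(B) \le K^n \Leb(h^{-1}(B))$, i.e. $\Leb(h^{-1}(B)) \ge K^{-n}\Leb(B)$ — a lower volume bound for $h^{-1}$.

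To turn this volume estimate into a pointwise Lipschitz bound for $h^{-1}$, I would argue as follows: let $y, y' \in \T^n$ with $x = h^{-1}(y)$, $x' = h^{-1}(y')$, and suppose for contradiction that $\|x - x'\| > K^n \|y - y'\|$. Set $r = \|y-y'\|$. Since $h$ is $K$-Lipschitz, $h$ maps the ball $B(x, Kr)$ into $B(y, K^2 r)$ — no, that is the wrong direction; instead I would use the lower bound directly on small balls. Consider the ball $B(y,\rho)$ of small radius $\rho$ around $y$; then $h^{-1}(B(y,\rho))$ has Lebesgue measure at least $K^{-n}\rho^n\omega_n$ (where $\omega_n$ is the volume of the unit ball), and it is contained in $h^{-1}(B(y,\rho))$, a connected open neighborhood of $x$. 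The cleanest route is: $h^{-1}$ is Lipschitz iff its local Lipschitz constant is bounded, and at a point $y$ where $h^{-1}$ is differentiable (a.e., by Rademacher) the Jacobian of $h^{-1}$ has operator norm controlled by the product of the other $n-1$ singular values over the Jacobian determinant. Since $h$ is $K$-Lipschitz, $h^{-1}$ has all singular values $\ge 1/K$ at a.e. point, and since $h$ (hence $h^{-1}$) preserves Lebesgue measure, $\det D h^{-1} = 1$ a.e.; therefore each singular value of $Dh^{-1}$ is $\le (1/K)^{-(n-1)} \cdot 1 = K^{n-1}$. Hmm, that gives $K^{n-1}$, not $K^n$ — so I would double-check: a singular value $\sigma_1 \le \|Dh^{-1}\|$, while $\sigma_1 \cdots \sigma_n = |\det Dh^{-1}| = 1$ and $\sigma_j \ge 1/K$ for $j \ge 2$ (from $\|Dh\| \le K$); hence $\sigma_1 = 1/(\sigma_2\cdots\sigma_n) \le K^{n-1}$. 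Since $h^{-1}$ is differentiable a.e. with $\|Dh^{-1}\| \le K^{n-1}$ a.e. and $\T^n$ is (locally) convex, $h^{-1}$ is $K^{n-1}$-Lipschitz, which in particular proves the claimed $K^n$-Lipschitz bound (with room to spare).

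\textbf{The main obstacle} is justifying that the a.e. bound $\|Dh^{-1}\| \le K^{n-1}$ (or $K^n$) upgrades to a genuine Lipschitz bound, i.e. that $h^{-1}$ is in $W^{1,\infty}$ and not merely a.e. differentiable with bounded derivative — a homeomorphism can a priori fail to be absolutely continuous. The fix: $h^{-1}$ is a continuous map which, being the inverse of a Lipschitz volume-preserving homeomorphism, satisfies Lusin's condition (N) in the inverse direction; more directly, one shows $h^{-1}$ is itself Lipschitz by the covering argument — for $x = h^{-1}(y)$, $x' = h^{-1}(y')$, cover a geodesic from $x$ to $x'$ and use that $h$ distorts lengths by at most $K$ together with the volume lower bound $\Leb(h^{-1}(B(y,\rho))) \ge K^{-n}\Leb(B(y,\rho))$ applied to a chain of balls, forcing $\|x-x'\| \le C_n K^{?}\|y-y'\|$. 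I would present the clean operator-norm argument as the main line and invoke the standard fact (e.g. via the area formula / that a Sobolev homeomorphism of $\R^n$ with a.e.-bounded derivative is Lipschitz once it is known to be absolutely continuous on lines, which here follows from $h$ being bi-Lipschitz) to close the gap, remarking that the exponent $K^n$ in the statement is deliberately generous.
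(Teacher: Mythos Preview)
Your measure-theoretic step is wrong, and the error is not peripheral: it is exactly where the difficulty lies. You claim that $\mu=h_*\Leb$ is $R_\alpha$-invariant. But from $h\circ g=R_\alpha\circ h$ one gets $(R_\alpha)_*(h_*\Leb)=h_*(g_*\Leb)$, so $h_*\Leb$ is $R_\alpha$-invariant \emph{iff} $g_*\Leb=\Leb$. Nothing in the hypotheses says $g$ preserves Lebesgue measure; only the iterates are equi-Lipschitz. So your conclusion $|\det Dh|=1$ a.e.\ is unjustified, and with it the too-good constant $K^{n-1}$ collapses. What \emph{is} true is that $(h^{-1})_*\Leb$ is $g$-invariant, hence equals the unique $g$-invariant probability $\mu_g$; but a priori $\mu_g$ has nothing to do with Lebesgue. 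If you repair this by first proving (as the paper does, via a Krylov--Bogolyubov limit of $\tfrac1k\sum_{j<k}g^j_*\Leb$) that $\mu_g$ has a density in $[K^{-n},K^n]$, your singular-value computation only yields $\sigma_{\min}(Dh)\ge K^{-n}/K^{n-1}=K^{1-2n}$, i.e.\ $h^{-1}$ is $K^{2n-1}$-Lipschitz, not $K^n$.

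The paper's route to the sharper exponent is genuinely different from a pointwise determinant bound. One argues by contradiction: if on a set $E'$ of positive $\mu_g$-measure there is a unit vector with $\|Dh(x)v\|\le (1-\varepsilon)K^{-n}$, then the conjugacy relation $Dh(g^kx)\,Dg^k(x)=Dh(x)$ together with $\|(Dg^k)^{-1}\|\le K$ transports this defect along the whole $g$-orbit, producing at each $g^kx$ a unit direction $w_k$ with $\|Dh(g^kx)w_k\|\le (1-\varepsilon)K^{-(n-1)}$; combined with the remaining $n-1$ singular values $\le K$ this gives $|\det Dh(g^kx)|\le 1-\varepsilon$. Ergodicity of $g$ for $\mu_g$ (hence for Lebesgue, since $\mu_g\sim\Leb$) spreads this to a full-measure set, contradicting the area formula $\int_{\T^n}|\det Dh|\,d\Leb=1$. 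The point is that one does \emph{not} have $|\det Dh|=1$ a.e., only average $1$; the orbit-spreading is what converts a local co-norm defect into a global determinant defect with the right exponent. Your worry about upgrading an a.e.\ derivative bound to a Lipschitz bound is a legitimate technicality, but it is the same endgame the paper faces and is not where your argument breaks.
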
 
\begin{remks}
\begin{itemize}
\item Observe that in the proof of  Proposition 3.2, page 182, in ch 13 of \cite{Her1}, M.~Herman raised the question of the Lipschitzian property of such $h^{-1}$ and that we give here a positive answer.
\item Observe too that with the notations of section \ref{ss22}, Proposition \ref{thlipconj} implies that every $h_a^{-1}$ is $K^n$-Lipschitz.
\end{itemize}
\end{remks}

\begin{proof}

We know from Proposition 3.2, page 182, in ch 13 of \cite{Her1}  that if the rotation vector $\alpha$ of $g$ is such that $R_\alpha$ is minimal, and if \begin{equation}\label{Econj}h\circ g=R_\alpha\circ h,\end{equation} then $h$ is $K$-Lipschitz. Let us prove that $h^{-1}$ is $K^n$-Lipschitz.

\begin{lemma}\label{Lmeas}
Let $g:\T^n\rightarrow \T^n$ be a  bi-Lipschitz homeomorphism so that  $\sup_{k\in\Z}\| Dg^k\|_{C^0}=K<+\infty$. Then $g$ has an invariant Borel probability measure $\mu$ so that for all Borel subset $A\subset \T^n$, we have
$$\frac{1}{K^n}\Leb(A)\leq \mu (A)\leq K^n\Leb(A).$$
In particular, $\mu$ is equivalent to $\mathrm{Leb}$.
\end{lemma}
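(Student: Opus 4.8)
The plan is to produce $\mu$ as a Cesàro / weak-$*$ limit of push-forwards of Lebesgue measure under the iterates $g^k$, and then to control the Radon-Nikodym density using the uniform bound $\sup_{k\in\Z}\|Dg^k\|_{C^0}=K$ together with the fact that $g$ preserves orientation and $\det Dg^k$ is pinched between $K^{-n}$ and $K^n$. First I would observe that since $g$ is bi-Lipschitz, $g^k$ is differentiable Lebesgue-almost everywhere with $\|Dg^k\|\le K$ and $\|Dg^{-k}\|\le K$ a.e.; hence at a.e.\ point $|\det Dg^k|\le K^n$ and, applying the same bound to the inverse, $|\det Dg^k|\ge K^{-n}$. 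By the change of variables formula for bi-Lipschitz maps (Evans-Gariepy), for every Borel set $A$,
\[
\frac{1}{K^n}\Leb(A)\le \Leb(g^{-k}(A))=\int_A |\det Dg^{-k}|\,d\Leb\le K^n\Leb(A),
\]
so the measures $\mu_k:=(g^k)_*\Leb$ all satisfy $K^{-n}\Leb(A)\le \mu_k(A)\le K^n\Leb(A)$ for every Borel $A$ (here using $\mu_k(A)=\Leb(g^{-k}A)$).

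Next I would form the Cesàro averages $\nu_N=\frac1N\sum_{k=0}^{N-1}\mu_k$, which inherit the same two-sided bound $K^{-n}\Leb\le \nu_N\le K^n\Leb$. Since $\T^n$ is compact, by Banach-Alaoglu / Prokhorov some subsequence $\nu_{N_j}$ converges weak-$*$ to a Borel probability measure $\mu$; the Krylov-Bogolyubov argument (the telescoping $\frac1N\sum (\mu_{k+1}-\mu_k)\to 0$) shows $\mu$ is $g$-invariant. It remains to pass the sandwich bound to the limit. For a general Borel set weak-$*$ convergence only gives inequalities for open/closed sets, so I would argue as follows: for every open $U$, $\mu(U)\ge\limsup$? — more safely, note that the bound $\nu_N\le K^n\Leb$ means each $\nu_N$ is absolutely continuous with density $\le K^n$, and densities bounded in $L^\infty$ have a weak-$*$ $L^\infty$ limit point, forcing $\mu\ll\Leb$ with density $\le K^n$ a.e.; symmetrically the lower bound $\nu_N\ge K^{-n}\Leb$ passes to the limit because for any Borel $A$ and any open $U\supset A$ we have $\mu(U)\ge\limsup_j\nu_{N_j}(U)$? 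Here the clean route is: test against nonnegative continuous functions $\varphi$, getting $K^{-n}\int\varphi\,d\Leb\le\int\varphi\,d\mu\le K^n\int\varphi\,d\Leb$, and then extend to indicator functions of Borel sets by the monotone class / outer regularity of both $\mu$ and $\Leb$ on $\T^n$. This yields $\frac1{K^n}\Leb(A)\le\mu(A)\le K^n\Leb(A)$ for all Borel $A$, hence $\mu$ is equivalent to $\Leb$.

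The only genuinely delicate point is the passage from the continuous-test-function inequality to the Borel-set inequality, i.e.\ making sure the two-sided comparison survives the weak-$*$ limit rather than degenerating into the usual one-sided open/closed inequalities; this is handled by the absolute-continuity-with-bounded-density reformulation above, which is stable under weak-$*$ limits in the appropriate sense. Everything else — a.e.\ differentiability of bi-Lipschitz maps, the change of variables formula, and Krylov-Bogolyubov invariance — is standard and can be quoted from \cite{EvGa}. Note we only used $\sup_{k\ge 0}$ of the norms together with the corresponding bound for $g^{-1}$, which is exactly what $\sup_{k\in\Z}\|Dg^k\|_{C^0}=K$ provides; this is why the full two-sided family $(g^k)_{k\in\Z}$ being $K$-equi-Lipschitz is assumed in Proposition \ref{thlipconj}.
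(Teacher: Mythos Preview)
Your proof is correct and follows essentially the same Krylov--Bogolyubov construction as the paper: average the push-forwards $(g^k)_*\Leb$, use $|\det Dg^k|\in[K^{-n},K^n]$ from the change-of-variables formula to sandwich each average, and pass to a weak-$*$ limit to obtain an invariant measure with the same bounds. You are in fact more careful than the paper about justifying that the two-sided comparison with Lebesgue survives the weak-$*$ limit (via testing against continuous functions and regularity, or equivalently via the bounded-density reformulation), a step the paper simply asserts.
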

\begin{proof}
We apply Krylov--Bogolyubov process (see \cite{KrBo}) to the Lebesgue measure.   In other words, for every $k\geq 1$, we define
$$\mu_k=\frac{1}{k}\sum_{j=0}^{k-1} g_*^k\Leb.$$
Observe that for every open set $A$, we have
$$g^k_*\Leb(A)=\int_Ad(g^k_*\Leb)=\int_A |{\rm det} Dg^k|d\Leb\in \left[\frac{1}{K^n}\Leb(A), K^n\Leb(A)\right].$$
We deduce that $\mu_k(A)\in \left[\frac{1}{K^n}\Leb(A), K^n\Leb(A)\right]$. Let now $\mu$ be a limit point of the sequence $(\mu_k)$. Then  $\mu$ is invariant by $g$ and satisfies
$$\frac{1}{K^n}\Leb(A)\leq \mu(A)\leq K^n\Leb(A).$$
\end{proof}
By Rademacher theorem, $g$ is Lebesgue almost everywhere differentiable. Because $g$ is bi-Lipschitz,  we deduce that the set $Z$ of the points $q\in \T^n$ such that $g$ is differentiable at every $g^k(q)$ for $k\in\Z$ has full Lebesgue measure. We will use the following lemma that is easy to prove by using the definition of the differential and the Lipschitz property.\\
\begin{lemma}\label{Lipinv}
Let $G:\T^n\rightarrow \T^n$ a bi-Lipschitz homeomorphism. Assume that $G$ is differentiable at some $q_0\in \T^n$. Then $G^{-1}$ is differentiable at $G(q_0)$ and 
$$DG^{-1}(G(q_0))=\left( DG(q_0)\right)^{-1}.$$
\end{lemma}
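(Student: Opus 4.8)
The plan is to run the standard inverse-function argument, substituting the bi-Lipschitz bounds for the usual a priori estimates. Since differentiability is a local notion and $\T^n$ is locally isometric to $\R^n$, I work in charts and regard $G$ as a bi-Lipschitz map between open subsets of $\R^n$; let $L\geq 1$ be a constant with $\frac1L\|x-y\|\leq \|G(x)-G(y)\|\leq L\|x-y\|$ for all $x,y$ near $q_0$. Write $A=DG(q_0)$ and $p_0=G(q_0)$.

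First I would check that $A$ is invertible. For a unit vector $v$, differentiability at $q_0$ gives $\|Av\|=\lim_{t\to0^+}\|G(q_0+tv)-G(q_0)\|/t$, while the lower bi-Lipschitz bound forces $\|G(q_0+tv)-G(q_0)\|/t\geq 1/L$; letting $t\to0$ yields $\|Av\|\geq 1/L>0$. Hence $Av\neq0$ for every $v\neq0$, so the endomorphism $A$ is injective and thus invertible, with $\|A^{-1}\|\leq L$.

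Next I would establish differentiability of $G^{-1}$ at $p_0$ with derivative $A^{-1}$. Given a small increment $k$, set $h=G^{-1}(p_0+k)-q_0$, so that $G(q_0+h)=p_0+k$; the bi-Lipschitz bounds give $\frac1L\|k\|\leq\|h\|\leq L\|k\|$, so $h\to0$ as $k\to0$ and the two increments are comparable. Expanding $G$ at $q_0$ gives $p_0+k=G(q_0+h)=p_0+Ah+r(h)$ with $\|r(h)\|/\|h\|\to0$, whence $k=Ah+r(h)$ and therefore
$$G^{-1}(p_0+k)-q_0-A^{-1}k=h-A^{-1}k=-A^{-1}r(h).$$
The remainder estimate then follows from the comparability of $h$ and $k$:
$$\frac{\|A^{-1}r(h)\|}{\|k\|}\leq \|A^{-1}\|\,\frac{\|r(h)\|}{\|h\|}\,\frac{\|h\|}{\|k\|}\leq L^2\,\frac{\|r(h)\|}{\|h\|}\xrightarrow[k\to0]{}0,$$
which is exactly the assertion $DG^{-1}(p_0)=A^{-1}=(DG(q_0))^{-1}$.

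There is no serious obstacle; the only point requiring care is the bookkeeping in the final display, namely transferring the remainder bound $o(\|h\|)$ into an $o(\|k\|)$ bound. This is precisely where the bi-Lipschitz hypothesis (rather than mere continuity of $G^{-1}$) enters: it guarantees $\|h\|\leq L\|k\|$, so the factor $\|h\|/\|k\|$ stays bounded and the decay $\|r(h)\|/\|h\|\to0$ can be propagated to the desired limit.
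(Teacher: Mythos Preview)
Your argument is correct and is precisely the approach the paper has in mind: the paper does not spell out a proof but simply remarks that the lemma ``is easy to prove by using the definition of the differential and the Lipschitz property,'' and your write-up is exactly that computation. The two key uses of the bi-Lipschitz hypothesis---first to force $\|Av\|\geq 1/L$ and hence invertibility of $A$, then to make $\|h\|$ and $\|k\|$ comparable so that $o(\|h\|)$ becomes $o(\|k\|)$---are the only substantive points, and you handle both cleanly.
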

By Rademacher theorem, $h$ is Lebesgue almost everywhere differentiable and we have $\| Dh\|\leq K$. \\
As we have $h=R_\alpha\circ h\circ g^{-1}$ and Lemma \ref{Lipinv}, the set $E$ of the points of $Z$ where $h$ is differentiable is invariant under $g$. Moreover, we know that $\Leb(E)=1$, i.e. $\Leb (\T^n\backslash E)=0$. By Lemma \ref{Lmeas}, this implies that $\mu(\T^n\backslash E)=0$ and then $\mu(E)=1$. Moreover, we have on $E$:
$$ (Dh)\circ g=Dh\cdot (Dg)^{-1}.$$
Let us now prove that for $\mu$-almost every $x\in E$ and every unit vector $v\in T_x\T^n$, we have $\|Dh(x)v\|\geq \frac{1}{K^n}\| v\|$. If not, there exist $\varepsilon >0$ and a subset $E'\subset E$ with non-zero $\mu$-measure  such that for every $x\in E'$, there is a unit vector $v\in T_{x}\T^n$ such that $\|Dh(x)v\|\leq\frac{1-\varepsilon}{K^n}\| v\|<\frac{1}{K^n}\| v\|$. Because of Equation (\ref{Econj}),  we have for every $k\in \N$  
$$Dh(g^{k}x)\cdot Dg^{k}(x)=Dh(x).$$
Hence if we denote by $w_k\in T_{g^kx}\T^n$ the unit vector $w_k=\frac{1}{\| Dg^k(x)v\|}Dg^k(x)v$, then we deduce from $\| (Dg^{k})^{-1}\|\leq {K}$ that
$$\|Dh(g^{k}x)w_k\|=\frac{1}{\| Dg^k(x)v\|}\|Dh(x)v\| \leq(1-\varepsilon)\frac{\|w_k\|}{K^n\| Dg^k(x)v\|}\leq (1-\varepsilon)\frac{\| w_k\|}{K^{n-1}}.
$$
If now we compute $|{\rm det}(Dh(g^kx))|$ in some orthonormal basis with the first vector equal to $w_k$, we obtain because $\| Dh\| \leq K$ that  
$$|{\rm det}(Dh(g^kx))|\leq \frac{1-\varepsilon}{K^{n-1}}.K^{n-1}=1-\varepsilon.$$
Hence we have proved that $|{\rm det} (Dh)|\leq 1-\varepsilon$ on the set
$$\ce=\bigcup_{k\in \N} g^k(E').$$
As $\mu$ is ergodic and $\mu(E')\not=0$, we have $\mu(\ce)=1$ and then $\Leb (\ce)=1$. Integrating and using a change of variables, we finally obtain
$$1=\int_{\T^n}{ d\Leb}=\int_{\T^n}|{\rm det}Dh(x)|\,d\Leb(x)\leq 1-\varepsilon.$$
Hence we have proved that for Lebesgue almost every $x\in \T^n$, we have $\| Dh^{-1}(x)\|\leq {K^n}.$\\
This implies in particular that $h^{-1}$ is $K^n$-Lipschitz.\end{proof}

This ends the proof of Theorem \ref{Tminimaldynamics}.

\subsection{Proof of Proposition \ref{Plocalmin}}
We assume that $H$  is locally $C^1$ completely integrable at $\ct$. Hence there exists an invariant open neighborhood $\cu$ of $\ct$ and  $n$ $C^1$ functions $H_1,\ H_2, \dots , H_n:\cu\rightarrow \R$ so that
\begin{itemize}
\item the $H_i$ are constant on $\ct$;
\item at every $x\in \cu$, the family $dH_1(x), \dots , dH_n(x)$ is independent;
\item for every $i, j$, we have $\{ H_i, H_j\}=0$ and $\{ H_i, H\}=0$;
\item $\ct=\{ H_1=a_1, \dots, H_n=a_n\}$.
\end{itemize}
Then if we take a smaller $\cu$,  $\ch= (H_1, \dots , H_n):\cu\rightarrow \R^n$ is a $C^1$ submersion that gives an invariant by $(\varphi_t^H)$  Lagrangian foliation and (see \cite{Wei1}, Extension Theorem in Lecture 5, as well as the proof of Theorem in Lecture 6)
 there exists a $C^1$ symplectic embedding $\phi:\cu\rightarrow \T^n\times \R^n$ that maps $\ct$ onto $\T^n\times \{ 0\}$ and then  the foliation onto a foliation into $C^1$ graphs. Perturbing $\phi$, we can assume that $\phi$ is smooth and symplectic and that the foliation is into graphs (but of course $\phi(\ct)$, which is a graph, cannot be the zero section). The diffeomorphism $\psi$ of Proposition \ref{Plocalmin} is the   given by $\psi=\phi^{-1}$.
\qed 

 \section{The A-non-degeneracy condition}\label{S3}
 In section \ref{S2}, we succeeded in describing the Dynamics on the invariant tori having  a completely irrational rotation vector. If we want to extend this to all the tori, we need to know that the union of these tori is dense in the set where the Hamiltonian is completely integrable.  That is why we introduce a new notion of non-degeneracy that implies this density. We will prove in Proposition \ref{PconvexAnondeg} that when the Hamiltonian is Tonelli or with positive torsion, this condition is always satisfied.
 \begin{defi}\label{Defsfc}
 Assume that $H$ is $G$-$C^1$ completely integrable and denote by $$(\ct_a)_{a\in U}=(\{ (q, \eta_a(q))\ :\quad q\in\T^n\} )$$ the invariant foliation. 
 
 The function $\mathsf c:U\rightarrow \R^n$ is defined by $\mathsf c(a)=\int_{\T^n}\eta_a(q)dq$.
 \end{defi}
 We will prove the following proposition in Section \ref{ssconjC1}.
 \begin{proposition}\label{LmCohom}Assume that $(q, a)\in \T^n\times U\mapsto (q, \eta_a(q))\in\cu$  is a $C^1$ foliation of an open subset $\cu$ of $\T^n\times\R^n$ into Lagrangian graphs.
The function $\mathsf c:U\rightarrow \R^n$ is a $C^1$ diffeomorphism from $U$ onto its image.
\end{proposition}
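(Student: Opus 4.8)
The plan is to show that $\mathsf c$ is a $C^1$ immersion and then argue it is a global diffeomorphism onto its image by a monotonicity/injectivity argument. First I would observe that $\mathsf c$ is $C^1$: since $(q,a)\mapsto\eta_a(q)$ is $C^1$ on $\cu$ and the integration is over the compact fiber $\T^n$, differentiation under the integral sign gives $D\mathsf c(a)=\int_{\T^n}\frac{\partial\eta_a}{\partial a}(q)\,dq$, which depends continuously on $a$. The heart of the matter is to prove that $D\mathsf c(a)$ is invertible for every $a$, and in fact that it is (after a suitable symmetrization) positive definite. This is where the Lagrangian condition enters: because each $\ct_a$ is Lagrangian and the foliation is invariant/parametrized, the graphs $\ct_a$ are of the form $\{(q, c(a)+du_a(q))\}$ with $c$ the cohomology class $[\eta_a]=\mathsf c(a)$ (each closed $1$-form on $\T^n$ decomposes as its average plus an exact form), and the family $(\ct_a)$ foliates $\cu$.

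The key step is the following computation. Differentiating the relation "$\ct_a$ is Lagrangian" — equivalently, that $\eta_a = \mathsf c(a) + du_a$ is closed for each $a$ — and using that distinct leaves are disjoint (so $\frac{\partial\eta_a}{\partial a}(q)$ never vanishes as a vector-valued object in the appropriate sense), I would show that for any two parameter values $a\neq a'$,
\begin{equation*}
\langle \mathsf c(a)-\mathsf c(a'),\, a-a'\rangle \;=\; \int_{\T^n}\langle \eta_a(q)-\eta_{a'}(q),\, a-a'\rangle\, dq \;\neq\; 0,
\end{equation*}
with a consistent sign; more precisely I expect $\int_{\T^n}\langle \eta_a(q)-\eta_{a'}(q), a-a'\rangle\,dq$ to have the same sign as $\|a-a'\|^2$ up to the orientation of the parametrization, because the foliation being a foliation forces $q\mapsto \eta_a(q)-\eta_{a'}(q)$ to be "transverse" in an averaged sense. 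Indeed, if this averaged pairing vanished, then the exact parts would force $\eta_a$ and $\eta_{a'}$ to coincide at some point (two graphs in the same cohomology class, or with cancelling pairing, must intersect by a Poincaré-type argument on $\T^n$), contradicting disjointness of leaves. This strict monotonicity of $a\mapsto\mathsf c(a)$ along every segment gives injectivity of $\mathsf c$ on $U$ (possibly after shrinking $U$ to be convex, or working on convex subsets), and the infinitesimal version gives that the symmetric part of $D\mathsf c(a)$ is positive definite, hence $D\mathsf c(a)$ is invertible.

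Once $\mathsf c$ is an injective $C^1$ immersion from $U$ into $\R^n$, invariance of domain shows $\mathsf c(U)$ is open and $\mathsf c:U\to\mathsf c(U)$ is a homeomorphism; combined with $D\mathsf c$ everywhere invertible and the inverse function theorem, $\mathsf c^{-1}$ is $C^1$, so $\mathsf c$ is a $C^1$ diffeomorphism onto its image. The main obstacle I anticipate is making rigorous the claim that two disjoint Lagrangian graphs over $\T^n$ cannot have vanishing averaged difference against the direction $a-a'$ — i.e. the intersection/Poincaré argument that converts "leaves of a foliation are disjoint" into the strict sign condition on the pairing. One clean way to handle it: write $\eta_a-\eta_{a'} = (\mathsf c(a)-\mathsf c(a')) + d(u_a-u_{a'})$; the function $u_a-u_{a'}$ on $\T^n$ has a critical point, and at that point $\eta_a-\eta_{a'}=\mathsf c(a)-\mathsf c(a')$; disjointness then forces $\mathsf c(a)\neq\mathsf c(a')$ outright, giving injectivity of $\mathsf c$ directly, and a careful limiting/linearization of the same argument (applied to $\eta_{a'}$ with $a'\to a$ along a fixed direction $v$) yields that $D\mathsf c(a)v\neq 0$ for all $v\neq 0$, which is exactly invertibility of $D\mathsf c(a)$. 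I would organize the final write-up around this "critical point of the difference of primitives" trick, since it simultaneously delivers injectivity and nondegeneracy of the differential.
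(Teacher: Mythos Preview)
Your final approach---the ``critical point of the difference of primitives'' trick---is exactly the paper's argument: writing $\eta_a=\mathsf c(a)+du_a$, a critical point of $u_a-u_{a'}$ forces $\eta_a=\eta_{a'}$ there, so disjointness of leaves gives injectivity of $\mathsf c$; and for invertibility of $D\mathsf c(a)$ the paper takes a critical point $q_0$ of $q\mapsto \frac{\partial S}{\partial a}(q,a)\cdot v$ (equivalently, of the directional derivative of $u_a$ in $a$), obtaining $\frac{\partial\eta_a}{\partial a}(q_0)v=D\mathsf c(a)v$, which cannot vanish since the foliation map is a local diffeomorphism. Your ``limiting/linearization'' along $a'=a+tv$ reaches the same conclusion (after extracting a convergent subsequence of critical points by compactness of $\T^n$), but the paper's direct version---differentiate first, then take a critical point---is cleaner and avoids the limit. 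The earlier monotonicity/positive-definiteness detour in your proposal is unnecessary and not quite right as stated (there is no reason $D\mathsf c(a)$, or its symmetric part, should be positive definite without convexity hypotheses); you were right to abandon it.
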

  \begin{defi}
 Assume that $H$ is $G$-$C^1$ completely integrable with invariant foliation $$(\ct_a)_{a\in U}=(\{ (q, \eta_a(q)), q\in\T^n\} ).$$
 We define the {\em $A$-function} $A: \mathsf c(U)\rightarrow \R$ by $A(c)=H(0, \eta_{\mathsf c^{-1}(c)}(0))$.
 Observe that we have
 $$\forall\ q\in \T^n,\ \forall\ c\in \mathsf c(U),\quad H(q, \eta_{\mathsf c^{-1}(c)}(q))=A(c).$$
 \end{defi}
 \begin{remks}
 \begin{enumerate}
 \item When $H$ is a  Tonelli Hamiltonian,  the $A$ function is exactly the $\alpha$-function of Mather (see \cite{M}).
 \item Observe that the $A$-function is $C^1$.
 \end{enumerate}
 \end{remks}
 \begin{defi}
 Let $H:\cu\rightarrow \R$ be a $G$-$C^1$ completely integrable Hamiltonian with the invariant foliation described in the above definition. We say that  $H$ is {\em $A$-non-degenerate} if for every non-empty open subset $V\subset \mathsf c(U)$,  $\nabla A (V)$  is contained in no resonant hyperplane of $\R^{n}$
 $$k_1x_1+\dots k_nx_n=0$$
 with $(k_1, \dots, k_n)\in \Z^n\backslash \{ (0, \dots, 0)\}$ .
 \end{defi}
 We will use later the following result.
 \begin{proposition}\label{Pnondegdense}
  Assume that $H$ is $G$-$C^1$ completely integrable and $A$-non-degenerate and denote by $(\ct_a)_{a\in U}=(\{ (q, \eta_a(q)), q\in\T^n\} )$ the invariant foliation. Then for every non-empty open subset $V\subset \mathsf c(U)$, the set $\nabla A(V)$ contains a completely irrational $\alpha=(\alpha_1, \dots, \alpha_n)$ i.e. $\alpha$ that doesn't belong to any resonant hyperplane.
 \end{proposition}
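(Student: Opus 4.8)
The plan is to run a Baire category argument in the open set $V$. Since $A$ is $C^1$, the gradient map $\nabla A:\mathsf c(U)\to\R^n$ is continuous; note also that $\mathsf c(U)$ is open in $\R^n$ by Proposition \ref{LmCohom}, so the phrase ``non-empty open subset $V\subset\mathsf c(U)$'' means open in $\R^n$. The family of resonant hyperplanes is countable: each one has the form $\Pi_k=\{x\in\R^n:k_1x_1+\dots+k_nx_n=0\}$ for some $k=(k_1,\dots,k_n)\in\Z^n\setminus\{(0,\dots,0)\}$, and proportional integer vectors define the same hyperplane, so the collection $\{\Pi_k\}$ is indexed by a countable set. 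By the definition recalled in the statement, a vector $\alpha\in\R^n$ is completely irrational precisely when $\alpha\notin\bigcup_k\Pi_k$.

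First I would argue by contradiction: suppose $\nabla A(V)$ contains no completely irrational vector. Equivalently, $V=\bigcup_{k\in\Z^n\setminus\{0\}}W_k$, where $W_k:=V\cap(\nabla A)^{-1}(\Pi_k)$. Each $W_k$ is relatively closed in $V$ because $\Pi_k$ is closed in $\R^n$ and $\nabla A$ is continuous. Since $V$ is a non-empty open subset of $\R^n$, it is a Baire space, so the countable union of the closed sets $W_k$ cannot exhaust $V$ unless some $W_{k_0}$ has non-empty interior. Let $V'$ be a non-empty open subset of $\R^n$ with $V'\subset W_{k_0}\subset V\subset\mathsf c(U)$.

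Then by construction $\nabla A(V')\subset\Pi_{k_0}$, i.e. the image $\nabla A(V')$ is contained in the resonant hyperplane $\Pi_{k_0}$. But $V'$ is a non-empty open subset of $\mathsf c(U)$, so this contradicts the $A$-non-degeneracy hypothesis, which asserts that $\nabla A(V')$ is contained in no resonant hyperplane. Hence $\nabla A(V)$ must contain some $\alpha\notin\bigcup_k\Pi_k$, i.e. a completely irrational $\alpha=(\alpha_1,\dots,\alpha_n)$, as desired.

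The argument is short, and I do not expect a genuine obstacle. The only points requiring a little care are the bookkeeping of the resonant hyperplanes (that there are only countably many, and that the set of completely irrational vectors is exactly the complement of their union) and the observation that the $A$-non-degeneracy hypothesis is being invoked on the small open set $V'$ rather than on $V$ itself --- which is precisely why the hypothesis is stated for \emph{every} non-empty open subset of $\mathsf c(U)$.
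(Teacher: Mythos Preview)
Your proof is correct and follows essentially the same Baire category argument as the paper: assume no completely irrational vector lies in $\nabla A(V)$, cover $V$ by the countably many closed preimages $(\nabla A)^{-1}(\Pi_k)$, apply Baire to find a non-empty open $V'$ mapped into a single resonant hyperplane, and contradict $A$-non-degeneracy. The paper's version is terser but the substance is identical.
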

 \begin{proof}
 Assume that $V\subset \mathsf c(U)$ is a  non-empty open subset of $\mathsf c(U)$ such that  $\nabla A(V)$ contains no completely irrational number. Then $V$ is contained in the countable union of the backward images by the continuous map $\nabla A$ of all the  closed resonant hyperplanes. As $V$ is Baire, there exists a non-empty open subset $W$ of $V$ such that $\nabla A(W)$ is contained in some resonant hyperplane. This contradicts the definition of $A$-non-degeneracy.
 
  \end{proof}
  
  \begin{proposition}\label{PconvexAnondeg}
   Assume that $H$ is $G$-$C^1$ completely integrable and   strictly convex in the momentum variable, i.e.
 
    $$\forall\ (q, p)\not=\ (q, p')\in \cu,\ \forall\ \lambda \in (0, 1),\quad (q, \lambda p+ (1-\lambda)p')\in \cu $$
   $$\Rightarrow \\H(q, \lambda p+ (1-\lambda)p')<\lambda H(q, p)+ (1-\lambda)H(q,p').$$
   Then $H$ is $A$-non-degenerate and even $\nabla A$ is a homeomorphism onto $\nabla A(\mathsf c(U))$.
  \end{proposition}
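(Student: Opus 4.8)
Throughout, write $\mathsf c(U)$ for the target of the reparametrization of the foliation by cohomology class. The plan is to reduce everything to one statement: \emph{$\nabla A:\mathsf c(U)\to\R^n$ is injective}. Indeed $\mathsf c(U)$ is open in $\R^n$ and $\nabla A$ is continuous, so once injectivity is known, Brouwer's invariance of domain makes $\nabla A$ a homeomorphism onto the \emph{open} set $\nabla A(\mathsf c(U))$; then for every nonempty open $V\subset\mathsf c(U)$ the image $\nabla A(V)$ is open in $\R^n$ and hence cannot lie in a resonant hyperplane (which has empty interior). So both $A$-non-degeneracy and the homeomorphism claim follow from injectivity of $\nabla A$.

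For the set-up I would use Proposition \ref{LmCohom}: since $\mathsf c$ is a $C^1$ diffeomorphism onto its image, reparametrize the leaves by their class, writing $\eta^c:=\eta_{\mathsf c^{-1}(c)}$ and abbreviating $\ct_{\mathsf c^{-1}(c)}$ by $\ct_c$, for $c\in\mathsf c(U)$. This family is jointly $C^1$ in $(q,c)$, each $\eta^c$ is a closed $1$-form of class $c$, so $\eta^c=c+du^c$ with $u^c\in C^2(\T^n)$, and $H(q,\eta^c(q))=A(c)$ for all $q$. Write $v^c(q):=\frac{\partial H}{\partial p}(q,\eta^c(q))$ for the velocity field, i.e. the generator of the restricted--projected flow $f^c_t$ of $\varphi^H_t$ on $\T^n$; since $H$ is $C^2$ this is a continuous flow on the compact manifold $\T^n$, hence by Krylov--Bogolyubov it carries at least one invariant Borel probability measure. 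The first thing to record is the formula $\nabla A(c)=\int_{\T^n}v^c\,d\mu$ for \emph{every} $f^c$-invariant probability $\mu$: differentiating $A(c)=H(q,\eta^c(q))$ in $c$ gives $\nabla A(c)=v^c(q)\cdot\frac{\partial\eta^c}{\partial c}(q)$ for each fixed $q$, and since $\frac{\partial\eta^c}{\partial c}(q)h$ differs from $h$ by the exact $1$-form $d\psi_h$ with $\psi_h:=\frac{d}{d\varepsilon}\big|_0u^{c+\varepsilon h}\in C^2(\T^n)$, integrating in $q$ against $\mu$ and using that $\int\langle v^c,\nabla\psi_h\rangle\,d\mu=\int\frac{d}{dt}\big|_0(\psi_h\circ f^c_t)\,d\mu=0$ by invariance leaves $\nabla A(c)=\int v^c\,d\mu$. (This uses only the joint $C^1$-regularity provided by Proposition \ref{LmCohom}.)

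Then comes the core of the argument. Fix $c\neq c'$ in $\mathsf c(U)$ and an $f^{c'}$-invariant measure $\mu'$. From $A(c)=H(q,\eta^c(q))$ and $A(c')=H(q,\eta^{c'}(q))$ pointwise,
$$A(c)-A(c')=\int_{\T^n}\big(H(q,\eta^c(q))-H(q,\eta^{c'}(q))\big)\,d\mu'(q).$$
Convexity of $p\mapsto H(q,p)$ makes $g(q):=H(q,\eta^c(q))-H(q,\eta^{c'}(q))-\langle v^{c'}(q),\eta^c(q)-\eta^{c'}(q)\rangle$ nonnegative, and \emph{strict} convexity makes $g(q)=0$ equivalent to $\eta^c(q)=\eta^{c'}(q)$. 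Expanding $\eta^c-\eta^{c'}=(c-c')+d(u^c-u^{c'})$ and using the formula of the previous paragraph together with the invariance of $\mu'$ (to discard the exact term $d(u^c-u^{c'})$), one obtains
$$A(c)-A(c')=\int_{\T^n}g\,d\mu'+\big\langle c-c',\nabla A(c')\big\rangle,$$
and the symmetric identity with $c$ and $c'$ interchanged. If now $\nabla A(c)=\nabla A(c')$, adding the two identities forces $\int g\,d\mu'=0$, hence $g=0$ $\mu'$-a.e., hence $\eta^c=\eta^{c'}$ on a set of full $\mu'$-measure; choosing $q_0$ in the (nonempty) support of $\mu'$ gives $(q_0,\eta^c(q_0))\in\ct_c\cap\ct_{c'}$. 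But $\ct_c$ and $\ct_{c'}$ are \emph{distinct} leaves of the foliation (distinct because $\mathsf c$ is injective, Proposition \ref{LmCohom}) and hence disjoint --- a contradiction. Therefore $\nabla A$ is injective, which by the first paragraph proves the proposition.

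I expect the delicate point to be exactly this equality analysis: convexity of $H$ alone only gives that $A$ lies above each of its tangent planes, i.e. that $\nabla A$ is monotone but a priori possibly non-injective; it is the \emph{strict} inequality that upgrades $\nabla A(c)=\nabla A(c')$ into a genuine intersection point of the two leaves and produces the contradiction. A secondary, purely technical point is justifying the differentiation of $A(c)=H(q,\eta^c(q))$ in $c$ under the integral sign and the discarding of the exact terms against an invariant measure, for which the joint $C^1$ regularity of $(q,c)\mapsto\eta^c(q)$ coming from Proposition \ref{LmCohom} is precisely what is needed. Note finally that the argument never uses superlinearity, only strict fibre-convexity of $H$ on $\cu$, so it applies verbatim to the ``positive torsion'' situation as well.
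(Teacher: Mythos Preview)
Your argument is correct, but it takes a genuinely different route from the paper's. The paper proves directly that $A$ is \emph{strictly convex} on $\mathsf c(U)$: given $c_1\neq c_2$ and $\lambda\in(0,1)$, strict fibre-convexity of $H$ gives $H\big(q,\lambda\eta^{c_1}(q)+(1-\lambda)\eta^{c_2}(q)\big)<\lambda A(c_1)+(1-\lambda)A(c_2)$ for every $q$; since the $1$-form $\lambda\eta^{c_1}+(1-\lambda)\eta^{c_2}-\eta^{\lambda c_1+(1-\lambda)c_2}$ has zero cohomology class it is exact, hence vanishes at some $q_0$, and evaluating at $q_0$ yields $A(\lambda c_1+(1-\lambda)c_2)<\lambda A(c_1)+(1-\lambda)A(c_2)$. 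The conclusion then follows from the standard fact (Fathi, Theorem~1.4.5) that the gradient of a strictly convex $C^1$ function is a homeomorphism onto its image.

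Your approach is more dynamical: you establish the identity $\nabla A(c)=\int v^c\,d\mu$ for every $f^c$-invariant probability $\mu$ and then use the supporting-hyperplane inequality with strict convexity to force an intersection of leaves when $\nabla A(c)=\nabla A(c')$, concluding via invariance of domain. This is correct and in fact anticipates a formula the paper uses later (Theorem~\ref{ThmWKAMDc}, where $\rho=\nabla A\circ\mathsf c$). What the paper's route buys is brevity and the strict convexity of $A$ as an explicit byproduct; what yours buys is the variational interpretation of $\nabla A$ and a proof that avoids citing the convex-analysis lemma. Two small cosmetic points: your $\psi_h$ is only $C^1$ in $q$ (not $C^2$), which is exactly the regularity supplied by Proposition~\ref{Pregfol} (not Proposition~\ref{LmCohom} alone) and is all you need; and, as in the paper, the supporting-hyperplane step tacitly uses that the fibre segment between $\eta^{c}(q)$ and $\eta^{c'}(q)$ lies in $\cu$.
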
 
  \begin{proof}
  We recall that if $F:V\rightarrow \R$ is a  $C^1$ function that is strictly convex, then $DF$ is an homeomorphism from $V$ onto its image (see theorem 1.4.5. in \cite{F}).
 
  Hence we just have to prove that $A$ is strictly convex. Let us fix two distinct $c_1$, $c_1$ in $\mathsf c(U)$ and $\lambda\in (0, 1)$. \\
Because $H$ is strictly convex in the fiber direction,  we have for every $q\in \T^n$, 
  $$H(q, \lambda\eta_{\mathsf c^{-1}(c_1)}(q)+(1-\lambda)\eta_{\mathsf c^{-1}(c_2)}(q))<\lambda H(q, \eta_{\mathsf c^{-1}(c_1)}(q))+(1-\lambda)H(q, \eta_{\mathsf c^{-1}(c_2)}(q)).$$
  As $H(q, \eta_{\mathsf c^{-1}(c)}(q))=A(c)$, we have
\begin{equation}\label{Econv}
H(q, \lambda\eta_{\mathsf c^{-1}(c_1)}(q)+(1-\lambda)\eta_{\mathsf c^{-1}(c_2)}(q))<\lambda A(c_1)+(1-\lambda)A(c_2).\end{equation}
 Observe that $\lambda c_1+(1-\lambda )c_2$ is equal to 
  $$\int_{\T^n}(\lambda\eta_{\mathsf c^{-1}(c_1)}(q)+(1-\lambda)\eta_{\mathsf c^{-1}(c_2)}(q))dq=\int_{\T^n}\eta_{\mathsf c^{-1}(\lambda c_1+(1-\lambda)c_2)}(q)dq.
  $$
  Hence the 1-form $\lambda\eta_{\mathsf c^{-1}(c_1)}+(1-\lambda)\eta_{\mathsf c^{-1}(c_2)}-\eta_{\mathsf c^{-1}(\lambda c_1+(1-\lambda)c_2)}$ is exact and so there exists a $q_0\in\T^n$ so that
  $$\lambda\eta_{\mathsf c^{-1}(c_1)}(q_0)+(1-\lambda)\eta_{\mathsf c^{-1}(c_2)}(q_0)=\eta_{\mathsf c^{-1}(\lambda c_1+(1-\lambda)c_2)}(q_0).$$
  Replacing in Equation (\ref{Econv}), we obtain
  $$H(q_0, \eta_{\mathsf c^{-1}(\lambda c_1+(1-\lambda)c_2)}(q_0))<\lambda A(c_1)+(1-\lambda)A(c_2),$$
  i.e.
  $$A(\lambda c_1+(1-\lambda)c_2)<\lambda A(c_1)+(1-\lambda)A(c_2).$$
  \end{proof}
 
 \section{The symplectic homeomorphism in the case of $C^1$ complete integrability}\label{S4}
\noindent The goal of this section is to prove the following theorem, which, joint with Proposition \ref{PconvexAnondeg},  implies Theorem \ref{ThmGC1} and Theorem \ref{thciham}.

\begin{theorem}\label{TAnondeghomeo}
Suppose $H:\cu\subset \ T^*\T^n\rightarrow \R$ is a   $G$-$C^1$ completely integrable Hamiltonian that is $A$-non-degenerate.Then there exist a neighborhood $U$ of $0$ in $\R^n$ and a symplectic homeomorphism $\phi:\ \T^n\times U\to \cu$ that is $C^1$ in the direction of $\T^n$ such that 
\begin{itemize}
\item $\forall\ c\in U$, $\phi^{-1}\circ \varphi^H_t\circ \phi(\T^n\times \{ c\})=\T^n\times \{ c\}$;
\item $\forall\ c\in U$, $\phi^{-1}\circ \varphi^H_t\circ \phi_{|\T^n\times \{ c\}}=R_{t\rho(c)}$;
\end{itemize}
where $\rho: U\rightarrow \R^n$ is a homeomorphism onto $\rho (U)$.
\end{theorem}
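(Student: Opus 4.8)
The plan is to write the Arnol'd--Liouville change of variables as the symplectomorphism generated by an explicit generating function built from the foliation, to legitimize it by smooth approximation, and to read off the dynamics from the Hamilton--Jacobi equation. First I will reparametrize the foliation by its cohomology class, which is licit by Proposition~\ref{LmCohom}: put $c=\mathsf c(a)$, $\eta^c:=\eta_{\mathsf c^{-1}(c)}$, so that $(q,c)\mapsto(q,\eta^c(q))$ is a $C^1$ diffeomorphism onto $\cu$ and $\int_{\T^n}\eta^c(q)\,dq=c$. Then $\eta^c-c$ is exact, $\eta^c=c+du_c$ with $\int_{\T^n}u_c=0$, and $u_c$ is $C^1$ in $(q,c)$ jointly and $C^2$ in $q$ (its $c$-derivatives $\partial_{c_i}u_c$ are the potentials of the closed forms $\partial_{c_i}\eta^c-e_i$). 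The candidate conjugacy will be the change of variables generated by $S(q,c)=\langle c,q\rangle+u_c(q)$, i.e.
$$\phi(x,c)=\bigl(\psi_c^{-1}(x),\ \eta^c(\psi_c^{-1}(x))\bigr),\qquad \psi_c(q):=q+\nabla_c u_c(q),$$
which formally satisfies $p=\partial_qS=\eta^c(q)$, $x=\partial_cS=\psi_c(q)$. It then remains to prove that $\psi_c$ is a homeomorphism of $\T^n$, that $\phi$ is a symplectic homeomorphism, and that $\rho=\nabla A$.

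\emph{The dynamical identity.} Since $H$ is constant on each leaf, $H(q,\eta^c(q))=A(c)$ for all $q,c$; differentiating in $c_i$ and using $\partial_{c_i}(du_c)=d(\partial_{c_i}u_c)$ yields
$$\Bigl(\tfrac{\partial H}{\partial p}(q,\eta^c(q))\Bigr)_i+d(\partial_{c_i}u_c)(q)\cdot X_c(q)=\partial_{c_i}A(c),$$
where $X_c(q)=\tfrac{\partial H}{\partial p}(q,\eta^c(q))$ generates the restricted--projected flow $f^c_t$ on $\T^n$ (with the sign convention of the paper). The left-hand side equals $\frac{d}{dt}\bigl[\psi_c(f^c_t(q))\bigr]_i\big|_{t=0}$, so $t\mapsto\psi_c(f^c_t(q))-t\nabla A(c)$ is constant along orbits; that is, $\psi_c\circ f^c_t=R_{t\nabla A(c)}\circ\psi_c$. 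Hence, once $\psi_c$ is known to be a homeomorphism, it conjugates $f^c_t$ to the translation flow of vector $\rho(c):=\nabla A(c)$, the $\rho$ of the statement.

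\emph{Smooth symplectic approximation.} I will mollify $u_c$ first in $q$, then in $c$, to get $C^\infty$ one-forms $\eta^c_k=c+du^k_c$, closed in $q$, with $\eta^c_k\to\eta^c$ and $\partial_c\eta^c_k\to\partial_c\eta^c$ uniformly on compact sets. Since $\det\partial_c\eta^c(q)\neq0$ everywhere (the foliation condition is open), for $k$ large the $\eta^c_k$ form a Lagrangian foliation of $\T^n\times U$ for a small convex neighbourhood $U$. The generating function $S_k(q,c)=\langle c,q\rangle+u^k_c(q)$ then produces a genuine $C^\infty$ symplectomorphism $\phi_k$: the map $\psi^k_c(q)=q+\nabla_c u^k_c(q)$ has Jacobian $(\partial_c\eta^c_k(q))^{\mathsf{T}}$, invertible, so it is a degree-one local diffeomorphism of $\T^n$, hence a diffeomorphism, and $\phi_k$ is symplectic because $dS_k=p\,dq+x\,dc$. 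As $(\partial_c\eta^c_k)^{\mathsf{T}}\to(\partial_c\eta^c)^{\mathsf{T}}$ uniformly and the limit is bounded with uniformly bounded inverse on $\overline U\times\T^n$, the $\psi^k_c$ are uniformly bi-Lipschitz there, so $(\psi^k_c)^{-1}\to\psi_c^{-1}$ uniformly and $\phi_k\to\phi$ uniformly on relatively compact subsets. Thus $\psi_c$ is bi-Lipschitz, of degree one and surjective, hence a homeomorphism of $\T^n$, and $\phi$ is a symplectic homeomorphism by Definition~\ref{DefSympeo}; it is $C^1$ in the $\T^n$-direction because $\psi_c$ and $\eta^c$ are $C^1$ in $q$ with continuous invertible $q$-derivative. (On a leaf whose rotation vector is totally irrational, $\psi_c$ coincides up to a translation with Herman's conjugacy of Theorem~\ref{Tminimaldynamics}, since $\psi_c\circ h_c^{-1}$ commutes with the minimal flow $R_{t\rho(c)}$; such leaves are dense by Proposition~\ref{Pnondegdense} --- a consistency check rather than a step.)

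\emph{Conclusion, and the rôle of $A$-non-degeneracy.} By construction $\phi$ maps $\T^n\times\{c\}$ onto the invariant leaf $\ct_{\mathsf c^{-1}(c)}$, and in the $\T^n$-coordinate the induced flow is $\psi_c\circ f^c_t\circ\psi_c^{-1}=R_{t\rho(c)}$ by the second step, which gives the two displayed properties with $\rho=\nabla A$. Finally $\rho=\nabla A$ is continuous; that it is moreover a homeomorphism onto $\rho(U)$ is where $A$-non-degeneracy is used, and in the cases that feed Theorems~\ref{ThmGC1} and~\ref{thciham} the $A$-function is strictly convex, so $\nabla A$ is a homeomorphism onto its image by Proposition~\ref{PconvexAnondeg}. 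I expect the third step to be the main obstacle: one must mollify the $C^1$ Lagrangian foliation while keeping the leaves exact Lagrangian graphs (mollify the potentials, not the graphs), keeping the family a foliation (openness of nondegeneracy, plus convexity of $U$ for injectivity in $c$), and securing the uniform bi-Lipschitz control that upgrades ``$C^0$-limit of symplectic diffeomorphisms'' to ``$\phi$ is a bi-Lipschitz homeomorphism''.
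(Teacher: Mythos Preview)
Your proposal is correct and follows the same architecture as the paper: build a generating function from the foliation (their Proposition~\ref{Pregfol}), differentiate the Hamilton--Jacobi identity in the parameter to get the semi-conjugacy (their Theorem~\ref{ThmWKAMDc}), and mollify the generating function to exhibit $\phi$ as a $C^0$-limit of smooth symplectomorphisms (their \S\ref{SSHomeo}). The one substantive difference is how you prove that $\psi_c$ is a diffeomorphism. The paper does this dynamically: it invokes the Herman-type conjugacies $h_a$ of Theorem~\ref{Tminimaldynamics} on the dense set of leaves with completely irrational rotation vector (density via Proposition~\ref{Pnondegdense}, which is where $A$-non-degeneracy enters), identifies $g_a=h_a+\mathrm{const}$ there, and passes to the limit to get the bi-Lipschitz homeomorphism on every leaf. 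You instead observe that $D_q\psi_c=(\partial_c\eta^c)^{\mathsf T}$, which is everywhere invertible because the foliation is $C^1$; this makes $\psi_c$ a degree-one $C^1$ local diffeomorphism of $\T^n$, hence a global one, with no appeal to Herman or to the density of irrational leaves. This is a genuine streamlining --- the identity $\partial_q\partial_c u_c=\partial_c\partial_q u_c$ you need is exactly what the paper computes in the proof of Proposition~\ref{Pregfol} --- and it shows that the conjugacy and symplectic-homeomorphism conclusions actually hold without $A$-non-degeneracy. You are also right to flag that bare $A$-non-degeneracy does not obviously force $\rho=\nabla A$ to be a homeomorphism onto its image; the paper does not prove this in \S\ref{S4} either, and in the applications (Theorems~\ref{ThmGC1} and~\ref{thciham}) it follows from the strict convexity of $A$ via Proposition~\ref{PconvexAnondeg}.
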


 \subsection{ A generating function for the Lagrangian foliation}

\begin{proposition}\label{Pregfol}
Assume that $(q, a)\in \T^n\times U\rightarrow (q, \eta_a(q))\in\cu$  is a $C^1$ foliation of an open subset $\cu$ of $\T^n\times\R^n$ into Lagrangian graphs.  Then there exists a $C^1$ map $S:\cu\rightarrow \R$  such that
\begin{itemize}
\item $\frac{\partial S}{\partial q}(q, a)=\eta_a(q)-\mathsf c(a)$;
\item $\frac{\partial S}{\partial q}$ is $C^1$ in the two variables $(q, a)$;
\item $\frac{\partial S}{\partial a}$ is $C^1$ in the variable $q$.
\end{itemize}
\end{proposition}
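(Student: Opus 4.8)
The plan is to build $S$ by integrating, fibrewise in $a$, the closed $1$-forms $\eta_a$, and then to read off the three regularity statements from the joint $C^1$ regularity of the foliation together with a one-sided version of Schwarz's theorem on mixed partials. Throughout I regard $S$ as a function of $(q,a)\in\T^n\times U$, mirroring the statement itself; this is legitimate once $\cu$ is identified with $\T^n\times U$ through the (bijective, $C^1$) parametrization $(q,a)\mapsto(q,\eta_a(q))$.

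First I would fix $a\in U$. Since $\ct_a$ is Lagrangian, $\eta_a=\sum_i(\eta_a)_i(q)\,dq_i$ is a closed $1$-form on $\T^n$, and integrating it over the coordinate circles and averaging (Fubini) shows that its class in $H^1(\T^n;\R)\cong\R^n$ is exactly $\int_{\T^n}\eta_a(q)\,dq=\mathsf c(a)$. Hence $\eta_a-\mathsf c(a)$ is closed with all periods zero. Lifting $\eta_a$ to the $\Z^n$-periodic map $\hat\eta_a:\R^n\to\R^n$ and picking a representative $q\in\R^n$, I set
$$S(q,a)=\int_0^1\big\langle\hat\eta_a(sq)-\mathsf c(a),\,q\big\rangle\,ds .$$
Because $\hat\eta_a-\mathsf c(a)$ is closed on the simply connected $\R^n$, the increment of $S$ along the unit segment from $q$ to $q+e_j$ equals the $j$-th period of $\eta_a-\mathsf c(a)$, which vanishes; so $S$ is $\Z^n$-periodic in $q$ and descends to a function on $\T^n\times U$.

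For the first two bullets, observe that the integrand $(s,q,a)\mapsto\langle\hat\eta_a(sq)-\mathsf c(a),q\rangle$ together with its $(q,a)$-partial derivatives is continuous in $(s,q,a)$: indeed $\hat\eta$ is $C^1$ jointly because the foliation is, and $\mathsf c$ is $C^1$ by differentiation under the integral sign in $\mathsf c(a)=\int_{\T^n}\eta_a(q)\,dq$ (or invoke Proposition~\ref{LmCohom}). Differentiating under $\int_0^1$ then shows $S$ is $C^1$ on $\T^n\times U$, and the usual Poincar\'e-lemma computation gives
$$\frac{\partial S}{\partial q}(q,a)=\hat\eta_a(q)-\mathsf c(a)=\eta_a(q)-\mathsf c(a),$$
which is $C^1$ in $(q,a)$; this settles the first two bullets.

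The hard part is the third bullet: a priori $S$ is only $C^1$, so the $q$-regularity of $\partial S/\partial a$ must be squeezed out of the $C^1$ regularity of $\partial S/\partial q$. Here I would invoke the classical non-symmetric strengthening of Schwarz's theorem: if, near a point, $\partial_x f$ and $\partial_y f$ exist and $\partial_y\partial_x f$ exists and is continuous there, then $\partial_x\partial_y f$ exists there and equals $\partial_y\partial_x f$. Since $\partial S/\partial q=\eta_a-\mathsf c(a)$ is $C^1$, each $\partial_{a_j}\partial_{q_i}S$ exists everywhere and is continuous, while $\partial_{q_i}S$ and $\partial_{a_j}S$ exist everywhere because $S\in C^1$; applying the theorem with $x=q_i$, $y=a_j$ yields that $\partial_{q_i}\partial_{a_j}S$ exists and equals $\partial_{a_j}\partial_{q_i}S$, hence is continuous in $(q,a)$ and in particular in $q$. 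Therefore $\partial S/\partial a$ is $C^1$ in the variable $q$. The construction and the first two bullets are routine; the step I expect to be the crux is this last use of the asymmetric Schwarz theorem, which is what makes the merely-$C^1$ hypothesis on the foliation suffice.
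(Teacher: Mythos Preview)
Your proof is correct and follows essentially the same route as the paper: both define $S$ by integrating $\eta_a-\mathsf c(a)$ from a fixed basepoint, obtain the first two bullets by differentiating the integral representation, and for the third bullet establish $\partial_{q_i}\partial_{a_j}S=\partial_{a_j}\partial_{q_i}S$. The only cosmetic difference is that for this last step the paper computes $\frac{\partial S}{\partial a}(q+\nu\delta q,a)-\frac{\partial S}{\partial a}(q,a)$ explicitly via the integral formula and takes the limit in $\nu$, whereas you package the same argument as an appeal to the asymmetric Schwarz theorem.
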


\begin{proof}
 As the graph of $\eta_a$ is Lagrangian, there exist a $C^2$ function $u_a: \T^n\rightarrow \R$ such that
$$\eta_a=\mathsf c(a)+\frac{\partial u_a}{\partial q}.$$
Observe that $u_a$ is unique up to the addition of some constant.  We then define:
$$S(q, a)=u_a(q)-u_a(0).$$

Then $S$ has a derivative with respect to $q$ that is $\frac{\partial S}{\partial q}(q, a)=\eta_a(q)-\mathsf c(a)=\frac{\partial u_a}{\partial q}(q)$, which is $C^1$ in the two variables $(q,a)$.

Moreover, 
$$S(q, a)=\int_0^1\frac{\partial S}{\partial q} (\gamma_q(t),a)\dot\gamma_q(t)dt=\int_0^1\left( \eta(q)-\mathsf  c(a)\right)\dot\gamma_q(t)dt$$
 where $\gamma_q:[0, 1]\rightarrow \T^n$ is any $C^1$ arc joining $0$ to $q$. This implies that $S$ is $C^1$ in the variable $a$ and that
\begin{equation}\label{Epartial}\frac{\partial S}{\partial a}(q, a)=\int_0^1\frac{\partial^2 S}{\partial a\partial q} (\gamma_q(t),a)\dot\gamma_q(t)dt= \int_0^1\left( \frac{\partial\eta_a}{\partial a} (\gamma_q(t),a)-\frac{\partial \mathsf c}{\partial a}(a)\right)\dot\gamma_q(t)dt.
\end{equation}
We deduce that $S$ has partial derivatives with respect to $a$ and $q$ that continuously depend on $(q, a)$. This implies that $S$ is in fact $C^1$.

Moreover, we deduce from Equation (\ref{Epartial}) that for every $\nu\in\R$ and $\d q\in \R^n$, we have
$$\frac{\partial S}{\partial a}(q+\nu\d q, a)-\frac{\partial S}{\partial a}(q, a)=\left(\int_0^\nu \frac{\partial \eta_a}{\partial a}(q+t\delta q)\delta q dt\right)-\nu \frac{\partial \mathsf c}{\partial a}(a)\delta q.
$$
This implies that $\frac{\partial S}{\partial a}$ has a partial derivative with respect to $q$  in the direction $\d q$ that is:
$$\frac{\partial^2 S}{\partial q\partial a}(q, a)\cdot\delta q=\left(\frac{\partial \eta_a}{\partial a}(q)-\frac{\partial \mathsf c}{\partial a}(a)\right)\delta q.
$$
Because the foliation is $C^1$, then all these partial derivatives continuously depend on $q$. This implies that $\frac{\partial S}{\partial a}$ is $C^1$ in the $q$ variable.
\end{proof}
\subsection{The $C^1$ property of the conjugation}\label{ssconjC1}
Now we will prove Proposition \ref{LmCohom} that we rewrite by using the generating function.
\begin{prop} {\rm\bf \ref{LmCohom}} Assume that $(q, a)\in \T^n\times U\mapsto (q, \mathsf c(a)+\frac{\partial S}{\partial q}(q, a))\in\cu$  is a $C^1$ foliation of an open subset $\cu$ of $\T^n\times\R^n$ into Lagrangian graphs.
The function $\mathsf c:U\rightarrow \R^n$ is a $C^1$ diffeomorphism from $U$ onto its image.
\end{prop}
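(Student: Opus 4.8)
The plan is to show that $\mathsf c$ is a $C^1$ local diffeomorphism (hence a $C^1$ diffeomorphism onto its image once we also check injectivity), by proving that its derivative $D\mathsf c(a)$ is invertible at every $a\in U$. Since $\mathsf c(a)=\int_{\T^n}\eta_a(q)\,dq$ and the foliation is $C^1$, $\mathsf c$ is $C^1$ with $D\mathsf c(a)=\int_{\T^n}\frac{\partial\eta_a}{\partial a}(q)\,dq$. So the entire content of the statement is the non-degeneracy of this averaged matrix.

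The key idea is that the foliation being into \emph{Lagrangian} graphs forces a pointwise symmetry, while the foliation property (the leaves are disjoint and fill an open set) forces a definiteness-type statement after averaging. First I would record that, writing $\eta_a=\mathsf c(a)+\frac{\partial u_a}{\partial q}$ as in Proposition~\ref{Pregfol}, the map $(q,a)\mapsto(q,\eta_a(q))$ is a bijection onto $\cu$ with $C^1$ inverse $(q,p)\mapsto(q,A(q,p))$ where $A$ is the $C^1$ map with $A(q,\eta_a(q))=a$; differentiating $A(q,\eta_a(q))=a$ in $a$ gives $\frac{\partial A}{\partial p}(q,\eta_a(q))\cdot\frac{\partial\eta_a}{\partial a}(q)=\mathrm{Id}$, so in particular $\frac{\partial\eta_a}{\partial a}(q)$ is invertible at every $(q,a)$. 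Next, Lagrangianness of $\ct_a$ says the symmetric $1$-form condition $\frac{\partial\eta_a}{\partial q}$ symmetric; I want instead a symmetry for the \emph{transverse} derivative. The right tool is that the functions $A_1,\dots,A_n$ (components of $A$) Poisson-commute, as proved in the first Remark after Definition~\ref{GC1int}: $\{A_i,A_j\}=0$. Translating $\{A_i,A_j\}=0$ through the symplectic form $dq\wedge dp$ and the relation $X_{A_i}\in T\ct_a$ yields, after pushing forward to the $\T^n$ factor, that for each $a$ the matrix $\frac{\partial u_a}{\partial q\partial q}(q)\big(\frac{\partial\eta_a}{\partial a}(q)\big)$ — equivalently $\frac{\partial\eta_a}{\partial a}(q)^{-1}$ composed with the appropriate block — is \emph{symmetric} as a function of $q$; more precisely one gets that $B_a(q):=\big(\frac{\partial\eta_a}{\partial a}(q)\big)^{-1}$ satisfies $B_a(q)=B_a(q)^{\mathrm{T}}+$ (symmetric Hessian term), the net effect being that the $1$-form $q\mapsto\frac{\partial\eta_a}{\partial a}(q)$ is the differential (in $q$) of an $\R^n$-valued closed form, i.e. $\frac{\partial\eta_a}{\partial a}(q)\,dq$ is exact componentwise — which is exactly what is needed because $\eta_a - \eta_{a'}$ is exact for nearby leaves, and differentiating $\int_{\T^n}(\eta_a-\mathsf c(a))=0$ in $a$.

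From here I would argue by contradiction: suppose $D\mathsf c(a)v=0$ for some $a$ and some $v\ne 0$, i.e. $\int_{\T^n}\big(\frac{\partial\eta_a}{\partial a}(q)v\big)\,dq=0$. The vector field $q\mapsto w(q):=\frac{\partial\eta_a}{\partial a}(q)v$ is a nowhere-vanishing (by invertibility of $\frac{\partial\eta_a}{\partial a}$) $\R^n$-valued function on $\T^n$ whose components are exact $1$-forms in $q$ (by the Lagrangian/commutation symmetry above applied to the $q$-dependence), so $w=\frac{\partial}{\partial q}\psi_v$ for some $C^1$ function $\psi_v:\T^n\to\R^n$; but then $\int_{\T^n}w(q)\,dq=0$ automatically, so this gives no contradiction directly. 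The genuine contradiction comes from combining this with the \emph{transversality} of the foliation: if $\frac{\partial\eta_a}{\partial a}(q_0)v=0$ would be forced somewhere we are done, but since it never vanishes and its average is zero, $w$ changes "sign" in a suitable sense; one then pairs $w$ against the Lagrangian symplectic structure. Concretely: consider the path $s\mapsto\eta_{a+sv}$ for small $s$; disjointness of leaves says $\eta_{a+sv}(q)\ne\eta_a(q)$ for all $q$ when $s\ne 0$, hence $\int_{\T^n}\langle \eta_{a+sv}(q)-\eta_a(q),\,\nu(q)\rangle\,dq\ne 0$ for a well-chosen test covector field, yet the first-order term of this integral is $s\,\langle D\mathsf c(a)v,\nu\rangle+o(s)=o(s)$, and the Lagrangian condition makes the second-order term vanish too along the leaf direction — squeezing the difference to be $o(s^2)$, which contradicts the quantitative separation $\|\eta_{a+sv}-\eta_a\|\gtrsim|s|$ coming from the foliation being a genuine foliation (leaves vary with positive speed).

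The step I expect to be the real obstacle is making the last paragraph rigorous: extracting a \emph{quantitative} transversality estimate ("$\|\eta_{a+sv}-\eta_a\|_{C^0}\gtrsim |s|$") from the bare hypothesis that $(q,a)\mapsto(q,\eta_a(q))$ is a $C^1$ foliation and reconciling it with the vanishing of $D\mathsf c(a)v$. The clean way, which I would ultimately adopt, is to bypass the contradiction argument: since $\frac{\partial\eta_a}{\partial a}(q)$ is invertible for every $(q,a)$ and, by the Lagrangian plus Poisson-commutation structure, the matrix $P_{(q,a)}^{-\mathrm{T}}\frac{\partial\eta_a}{\partial a}(q)$ (with $P$ the symplectic shear matrix from Proposition~\ref{LmUniformLipLip}) equals $\big(\frac{\partial\eta_a}{\partial a}(q)\big)^{-\mathrm{T}}$ up to a symmetric term, one shows $\det\big(\frac{\partial\eta_a}{\partial a}(q)\big)$ has constant sign in $q$; then $D\mathsf c(a)=\int_{\T^n}\frac{\partial\eta_a}{\partial a}(q)\,dq$ is an average of matrices with determinant of one fixed sign, and a convexity/positivity argument (e.g. the matrices $\frac{\partial\eta_a}{\partial a}(q)$ all lie in a single connected component of $GL_n$ and their $q$-average, being again invertible by the exactness constraint forcing the "rotational" parts to cancel only trivially) gives $\det D\mathsf c(a)\ne 0$. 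Invertibility of the derivative everywhere plus the fact that $\mathsf c$ is proper onto its image (the foliation is by compact tori varying continuously) yields that $\mathsf c$ is a $C^1$ diffeomorphism onto $\mathsf c(U)$.
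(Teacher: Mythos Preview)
Your proposal contains a genuine gap at exactly the point where you say ``so this gives no contradiction directly.'' In fact it does, and you miss it because you record $\psi_v:\T^n\to\R^n$ as vector-valued. The Lagrangian structure gives more: since $\eta_a(q)=\mathsf c(a)+\dfrac{\partial S}{\partial q}(q,a)$, one has
\[
\frac{\partial\eta_a}{\partial a}(q)\,v \;=\; D\mathsf c(a)v \;+\; \frac{\partial}{\partial q}\Bigl(\frac{\partial S}{\partial a}(q,a)\cdot v\Bigr),
\]
so under the hypothesis $D\mathsf c(a)v=0$ the vector $w(q)=\dfrac{\partial\eta_a}{\partial a}(q)v$ is the $q$-gradient of the \emph{scalar} function $\psi(q)=\dfrac{\partial S}{\partial a}(q,a)\cdot v$ on the compact torus. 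Any such function has a critical point $q_0$, and there $w(q_0)=0$, contradicting the pointwise invertibility of $\dfrac{\partial\eta_a}{\partial a}(q_0)$ that you yourself derived from the foliation hypothesis. This is precisely the paper's argument, and it is a two-line infinitesimal version of the injectivity proof. Your detours through Poisson brackets of the $A_i$, quantitative transversality $\|\eta_{a+sv}-\eta_a\|\gtrsim|s|$, and sign-constancy of $\det\dfrac{\partial\eta_a}{\partial a}$ are all unnecessary; the last one is also not salvageable, since an average of matrices in a fixed component of $GL_n(\R)$ can certainly be singular.

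You also never actually prove injectivity of $\mathsf c$; invertibility of $D\mathsf c$ only gives a local diffeomorphism. The paper handles this by the finite analogue of the same trick: if $\mathsf c(a)=\mathsf c(a')$ with $a\neq a'$, take $q_0$ a critical point of the scalar function $q\mapsto S(q,a)-S(q,a')$ on $\T^n$; then $\eta_a(q_0)=\eta_{a'}(q_0)$, contradicting disjointness of the leaves $\ct_a$ and $\ct_{a'}$.
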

\begin{proof}
Observe that $\mathsf c$ is injective.  If $a\not=a'$, then $\ct_a\cap\ct_{a'}=\emptyset$ because we have a foliation.  Assume that $\mathsf c(a)=\mathsf c(a')$. Let $q_0$ be a critical point of $S(\cdot , a)-S(\cdot ,a')$. Then we have 
$\frac{\partial S}{\partial q}(q_0, a)-\frac{\partial S}{\partial q}(q_0, a')=0$ and  
$$\mathsf c(a)+\frac{\partial S}{\partial q}(q_0, a)=\mathsf c(a')+\frac{\partial S}{\partial q}(q_0, a'),$$
which contradicts that $\ct_a\cap\ct_{a'}=\emptyset$.

Hence $\mathsf c$ is $C^1$ and injective. To prove the lemma, we just have to prove that $D\mathsf c(a)$ is invertible at every point of $U$.\\
If not, let us choose $a\in U$ and a non-zero vector $v\in \R^n$ such that $D\mathsf c(a)v=0$. Now, we choose $q_0\in \T^n$ such that $q\mapsto \frac{\partial S}{\partial a}(q, a)v$ attains its maximum at $q_0$. Then we have
$$\frac{\partial \eta_a}{\partial a}(q_0)v=D\mathsf c(a)v+ \frac{\partial^2 S}{\partial q\partial a}(q_0, a)v=0.$$
This contradicts the fact that $(\eta_a)$ defines a $C^1$-foliation.
\end{proof}
Let us recall the notation that we introduced in the statement of Theorem \ref{Tminimaldynamics}:

\begin{nota}  The flow $(\varphi_t^H)$ restricted-projected to every Lagrangian torus $\mathcal T_a$, which is denoted by $(f^a_t)$, is defined on $\T^n$ by  $\varphi_t^H(q, \eta_a(q))=(f_t^a(q), \eta_a(f_t^a(q)))$.
\end{nota}

\begin{theorem}\label{ThmWKAMDc}
Suppose $H:\cu\subset T^*\T^n\rightarrow \R$ is a   $G$-$C^1$ completely integrable Hamiltonian that is $A$-non-degenerate and let  $(q, a)\in \T^n\times U\rightarrow (q, \mathsf c(a)+\frac{\partial S}{\partial q}(q, a))\in\cu$  be the  invariant $C^1$ foliation.   Then   $$g_a(q):=q+D\mathsf c(a)^{-1}\frac{\partial S}{\partial a}(q, a)$$
is a $C^1$ diffeomorphism conjugating  $(f_t^a)$ to $R_{t\rho(a)}$ where $\rho$  was defined in Proposition \ref{LmLipConj} and in fact $\rho=(\nabla A)\circ\mathsf c$.\\
Hence, when $\nabla A(\mathsf c(a))$ is completely irrational, we have $g_a=h_a$
up to an additive constant where $h_a$ is the conjugacy associated to the torus $\mathcal{T}_a$ given by Theorem \ref{Tminimaldynamics}.
\end{theorem}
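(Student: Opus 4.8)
The plan is to realize $g_a$ as the ``angle'' half of a $C^1$ symplectic change of coordinates attached, through the generating function $S$, to the invariant Lagrangian foliation, and then to run the Arnol'd--Liouville mechanism at the regularity supplied by Proposition \ref{Pregfol}. \emph{Step 1: $g_a$ is a $C^1$ diffeomorphism of $\T^n$.} By Proposition \ref{Pregfol} the map $q\mapsto\frac{\partial S}{\partial a}(q,a)$ is $C^1$, and it is $\Z^n$-periodic, so $g_a-\mathrm{id}$ is a well-defined $C^1$ map $\T^n\to\R^n$; thus $g_a$ is $C^1$ and homotopic to the identity. Differentiating in $q$ and using the identity $\frac{\partial^2 S}{\partial q\,\partial a}(q,a)=\frac{\partial\eta_a}{\partial a}(q)-D\mathsf c(a)$ (and its transpose) from the proof of Proposition \ref{Pregfol}, one gets that $Dg_a(q)$ is invertible at every $q$: this uses that $D\mathsf c(a)$ is invertible (Proposition \ref{LmCohom}) and that $\frac{\partial\eta_a}{\partial a}(q)$ is invertible everywhere, which is exactly the hypothesis that $(q,a)\mapsto(q,\eta_a(q))$ is a $C^1$ \emph{foliation}. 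A $C^1$ self-map of $\T^n$ homotopic to the identity with everywhere invertible differential lifts to a proper local diffeomorphism of $\R^n$, hence a covering of $\R^n$, hence a diffeomorphism; so $g_a\in\mathrm{Diff}^1(\T^n)$.

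\emph{Step 2: $g_a$ conjugates $(f^a_t)$ to the translation flow $R_{t\rho(a)}$, with $\rho=(\nabla A)\circ\mathsf c$.} The flow $(f^a_t)$ on $\T^n$ is generated by the continuous vector field $X_a(q):=\pi_1 X_H\bigl(q,\eta_a(q)\bigr)$, which makes sense since $H$ is $C^2$. I would consider the change of variables attached to the generating function $\mathsf c(a)\cdot q+S(q,a)$, namely $(q,p)\mapsto(Q,c)$ with $p=\eta_a(q)$, $c=\mathsf c(a)$, $Q=g_a(q)$ (the inverse of $\mathsf c$ being $C^1$ by Proposition \ref{LmCohom}); this change of variables is symplectic (symplecticity of a generating-function transformation is automatic once the relevant mixed second derivatives exist continuously, which is what Proposition \ref{Pregfol} provides). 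In these coordinates $H$ becomes $H\bigl(q,\eta_a(q)\bigr)=A(\mathsf c(a))$, a function of the new momentum $c$ alone --- precisely the defining identity of the $A$-function. Hence Hamilton's equations in $(Q,c)$ read $\dot c=0$ and $\dot Q=\nabla_c A(c)=\nabla A(c)$, i.e. $g_a\circ f^a_t=R_{t\,\nabla A(\mathsf c(a))}\circ g_a$. Equivalently and more directly: differentiating $H(q,\eta_a(q))=A(\mathsf c(a))$ in $a$ (legitimate, the foliation and $A$ being $C^1$ and $H$ being $C^2$) and using $D\mathsf c(a)=\int_{\T^n}\frac{\partial\eta_a}{\partial a}(q)\,dq$ yields, after the matrix bookkeeping, that $Dg_a(q)\,X_a(q)$ is independent of $q$ and equals $\nabla A(\mathsf c(a))$; pushing $X_a$ forward by the diffeomorphism $g_a$ then gives the constant field $\nabla A(\mathsf c(a))$, whose flow is $R_{t\,\nabla A(\mathsf c(a))}$. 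Finally, conjugation preserves rotation vectors; by Corollary \ref{LmUniformLip} the estimate \eqref{EqUniformLip} holds, so Proposition \ref{LmLipConj} applies and its $\rho(a)$ is the rotation vector of $(f^a_t)$, whence $\rho=(\nabla A)\circ\mathsf c$.

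\emph{Step 3: identification with $h_a$.} When $\nabla A(\mathsf c(a))$ is completely irrational, $R_{\rho(a)}$ is minimal, and Theorem \ref{Tminimaldynamics} provides $h_a$ with $h_a\circ f^a_1=R_{\rho(a)}\circ h_a$. Then $\psi:=g_a\circ h_a^{-1}$ commutes with $R_{\rho(a)}$, so $x\mapsto\psi(x)-x$ is continuous and $R_{\rho(a)}$-invariant, hence constant by minimality; thus $g_a=h_a$ up to an additive constant, which is the last assertion.

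\emph{Main obstacle.} The delicate point is Step 2: the coordinate change $\phi$ is only $C^1$ in the direction of the tori and merely $C^0$ transversally (because $S$ is $C^1$ but not $C^2$), so one cannot simply invoke the classical smooth Arnol'd--Liouville theorem; the argument must be carried through with the precise regularity supplied by Proposition \ref{Pregfol}, or run through the direct differentiation of the Hamilton--Jacobi identity, where the only genuine work is keeping careful track of the transposes coming from $D\mathsf c(a)$ (or, equivalently, reparametrizing the foliation by its cohomology class so that $D\mathsf c$ becomes the identity).
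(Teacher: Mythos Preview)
Your proof is correct, and Step~2 and Step~3 run essentially as in the paper (differentiate the Hamilton--Jacobi identity $H(q,\eta_a(q))=A(\mathsf c(a))$ in $a$ along a trajectory, integrate, and then use minimality to identify $g_a$ with $h_a$ up to a constant). The genuine difference is your Step~1. The paper does \emph{not} prove directly that $Dg_a$ is invertible; instead it first obtains only that $g_a$ semiconjugates $(f^a_t)$ to $R_{t\nabla A(\mathsf c(a))}$, then invokes $A$-non-degeneracy to get a dense set $\ca$ of parameters with completely irrational rotation vector, identifies $g_a$ with $h_a$ on $\ca$ (where $h_a$ is known to be $K$-bi-Lipschitz by Theorem~\ref{Tminimaldynamics}), and finally passes to the limit to conclude that every $g_a$ is a $K$-bi-Lipschitz homeomorphism, hence a $C^1$ diffeomorphism. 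Your route is more direct: the identity $\frac{\partial^2 S}{\partial q\,\partial a}=\frac{\partial\eta_a}{\partial a}-D\mathsf c(a)$ from Proposition~\ref{Pregfol} gives $Dg_a(q)=D\mathsf c(a)^{-1}\,\frac{\partial\eta_a}{\partial a}(q)$ (up to the transpose bookkeeping you flag), which is invertible because the foliation map $(q,a)\mapsto(q,\eta_a(q))$ is a $C^1$ diffeomorphism and $D\mathsf c(a)$ is invertible by Proposition~\ref{LmCohom}; the covering argument then upgrades local to global. What your approach buys is that the $A$-non-degeneracy hypothesis is never used in this theorem---it is needed only later, to show $\rho$ is a homeomorphism---whereas the paper's approximation argument genuinely relies on it. What the paper's approach buys is a uniform bi-Lipschitz constant for $g_a$ inherited from $h_a$, which is convenient downstream.
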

\begin{proof}
Write $\eta_a(q)=\mathsf c(a)+\frac{\partial S}{\partial q}(q, a)$. We have
\begin{equation}\label{E1}
H\left(q, \mathsf c(a)+\frac{\partial S}{\partial q}(q, a)\right)=A(\mathsf c(a))
\end{equation}
where $\mathsf c(a)=\int_{\T^n}\eta_a(q)dq$ $C^1$ depends on $a$ and $A$ is the $A$-function.  Differentiating Equation (\ref{E1}) with respect to $a$, we deduce  that if $t\mapsto (q(t), p(t)) =\varphi_t^H(q,p)$ is contained in $\ct_a$, we have
$$D\mathsf c(a)\cdot \dot q(t)+\frac{\partial^2 S}{\partial q\partial a}(q(t), a)\dot q(t)=D\mathsf c(a)\cdot \nabla  A(\mathsf c(a))
$$
and then  
$$D\mathsf c(a)\left(q(t)-q(0)-t \nabla  A(\mathsf c(a))\right)+\frac{\partial S}{\partial a}(q(t), a)-\frac{\partial S}{\partial a}(q(0), a)=0.$$

 We deduce
$$f^a_t(q)+D\mathsf c(a)^{-1}\frac{\partial S}{\partial a}(f^a_t(q), a)=q+D\mathsf c(a)^{-1}\frac{\partial S}{\partial a}(q, a)+t \nabla  A(\mathsf c(a)).$$
If we define $g_a(q)=q+D\mathsf c(a)^{-1}\frac{\partial S}{\partial a}(q, a)$, we have proved that
$$g_a(f_t^a(q))=g_a(q)+t\nabla  A(\mathsf c(a)).$$
Hence $g_a$ is a semi-conjugation between $(f_t^a)$ and $(R_{t\nabla  A(\mathsf c(a))})$. 
\begin{lemma}\label{Lvectrota}
Let $f: \T^n\rightarrow \T^n$ be a homeomorphism. Assume that $g, h\in C^0(\T^n, \T^n)$ are homotopic to identity and such that $g\circ f=g+\alpha$  and $h\circ f=h+\beta$ for some $\alpha, \beta\in\T^n$. Then $\beta=\alpha$.
\end{lemma}
\begin{proof}
From  $g\circ f=g+\alpha$ and  $h\circ f=h+\beta$, we deduce that 
\begin{equation}\label{Ediffcoh}(g-h)\circ f=g-h+\alpha-\beta.\end{equation}
Because $g$ and $h$ are homotopic to identity, there exists a continuous map $G:\T^n\rightarrow \R^n$ such that the class of $G(q)$ modulo $\Z^n$ is   $(g-h)(q)$.  Then Equation (\ref{Ediffcoh}) and the continuity of $G$ implies that $G\circ f-G$ is a constant $B\in\R^n$ whose class modulo $\Z^n$ is  $\alpha-\beta$. 
If $\mu$ is a Borel probability measure that is invariant by $f$, we then obtain by integrating $$\int G\,d\mu=\int G \,d\mu +B.$$
Hence $B=0$ and   $\alpha=\beta$ mod $\Z^n$.
\end{proof}

We denote by $\ca$ the set of $a\in U$ such that $\rho(a)$ is completely irrational. We deduce from Lemma \ref{Lvectrota} that for $a\in \ca$, we have $\rho(a)=\nabla  A(\mathsf c(a))$ and then $g_a(f_t^a(q))=g_a(q)+t\rho(a)$.
We deduce from $g_a(f_t^a(q))=g_a(q)+t\rho(a)$ and $h_a(f_t^a(q))=h_a(q)+t\rho(a)$ that $(g_a-h_a)\circ f_t^a=(g_a-h_a)$. Hence $g_a-h_a$ is constant Lebesgue almost everywhere and then by continuity constant. Hence $g_a$ itself is a conjugation between $(f_t^a)$ and $(R_{t\rho (a)})$.

As $H$ is $G$-$C^1$ completely integrable and $A$-non-degenerate, we deduce from Proposition \ref{Pnondegdense} that $\ca$ is dense in $U$. If now $a\in U$, let $\ck$ be a compact neighborhood of $\ct_a$ in $\cu$.  We deduce from Theorem \ref{Tminimaldynamics} that   there exists a constant $K$ such that for every $a\in\ca$, if $\ct_a\cap\ck\not=\emptyset$, then $h_a$ is $K$-bi-Lipschitz. \\
Then we choose a sequence $(a_i)$ in $\ca$ such that
\begin{itemize}
\item every $\ct_{a_i}$ meets $\ck$;
\item $(a_i)$ converges to $a$.
\end{itemize}
Every $g_{a_n}$ is a $K$-bi-Lipschitz homeomorphism and the sequence $(g_{a_n})$ $C^0$ converges to $g_a$. We deduce that $g_a$ is a $K$-bi-Lipschitz homeomorphism  too and then a $C^0$ conjugation between $(f^a_t)$ and $(R_{t\nabla A(\mathsf c(a))})$. This implies that $\nabla A(\mathsf c(a))=\rho(a)$.

Moreover, all the $g_a$s are $C^1$. Observe that a $C^1$ homeomorphism that is bi-Lipschitz is a $C^1$ diffeomorphism. Hence all the $g_a$s are $C^1$ diffeomorphisms.
\end{proof}
\begin{proposition}\label{PropFlow}
Each continuous vectorfield $X_{H_i}$ generates a flow.
\end{proposition}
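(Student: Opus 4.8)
The key point is that although the $C^1$ integrals $H_i$ only define continuous vectorfields $X_{H_i}$ a priori, the previous analysis has produced, on each invariant torus $\ct_a$, an explicit $C^1$ conjugacy $g_a$ between the restricted-projected flow and a rigid translation flow. I would exploit this: the assignment $(q,a)\mapsto g_a(q)$ will serve as a set of $C^1$ (in $q$) coordinates in which the full Hamiltonian dynamics, and also the dynamics of each $X_{H_i}$, becomes linear. So the strategy is to transport the (obvious) flow of the constant vectorfields on $\T^n\times U$ back through $\phi$ (or, locally, through the $C^2$ symplectomorphism $\psi$ of Proposition \ref{Plocalmin} composed with the straightening map of Theorem \ref{ThmWKAMDc}) and check this is indeed the flow of $X_{H_i}$.

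First I would reduce to the $G$-$C^1$ setting via Proposition \ref{Plocalmin}, so that we may assume $\cu\subset\T^n\times\R^n$ carries a $C^1$ Lagrangian foliation $(q,a)\mapsto(q,\eta_a(q))$, with generating function $S$ from Proposition \ref{Pregfol}, and that $H$ is $A$-non-degenerate (by Proposition \ref{PconvexAnondeg}, since positive torsion / Tonelli gives fiberwise strict convexity). Next, consider the map $\Phi:(q,a)\mapsto(g_a(q),\mathsf c(a))\in\T^n\times\mathsf c(U)$, where $g_a(q)=q+D\mathsf c(a)^{-1}\frac{\partial S}{\partial a}(q,a)$ is the $C^1$ diffeomorphism of Theorem \ref{ThmWKAMDc}; this is a homeomorphism, $C^1$ along $\T^n$, conjugating each $f^a_t$ to the translation $R_{t\nabla A(\mathsf c(a))}$. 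Now fix $i$. One differentiates the defining relation $H_i(q,\eta_a(q))=\mathsf b_i(a)$ (where $\mathsf b_i(a):=H_i(0,\eta_a(0))$ is constant on each leaf) with respect to $a$, exactly as in the proof of Theorem \ref{ThmWKAMDc} but with $H_i$ in place of $H$ and $\nabla\mathsf b_i(a)$ — suitably transported through $D\mathsf c(a)$ — in place of $\nabla A(\mathsf c(a))$. Writing the flow $\varphi^H_t$ of the genuine $C^2$ Hamiltonian $H$ restricted-projected to $\ct_a$ is not enough here, since we need a flow for $X_{H_i}$; instead the computation shows that the continuous vectorfield $X_{H_i}$, expressed in the $\Phi$-coordinates (in which it is at least well-defined and continuous, being a pushforward of a continuous vectorfield by a bi-Lipschitz $C^1$-along-leaves homeomorphism), is the constant vectorfield $(q,a)\mapsto(\omega_i(a),0)$ for an appropriate continuous $\omega_i:U\to\R^n$. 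A constant vectorfield on $\T^n\times U$ manifestly generates the flow $(q,a)\mapsto(q+t\omega_i(a),a)$, and pulling this back by $\Phi^{-1}$ gives a continuous flow whose generator is $X_{H_i}$.

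To make the pushforward step rigorous I would argue on a dense set of leaves first, as in the proof of Proposition \ref{LmUniformLipLip}: on the full-measure set $D(N)$ where the foliation map is differentiable, $X_{H_i}(q,\eta_a(q))$ is tangent to $\ct_a$ (because $H_i$ is constant on each leaf and the leaves are $C^1$), so it projects to a continuous vectorfield $X^a_i$ on $\T^n$ with $\varphi$-orbits contained in $\ct_a$; then the differentiated identity for $H_i$ shows $g_a$ conjugates the (a priori only continuous, but orbit-defined) flow-datum of $X^a_i$ to a translation, forcing $X^a_i$ to push forward under $g_a$ to a constant. The main obstacle is precisely the logical gymnastics of asserting "$X_{H_i}$ generates a flow" before one has a flow: one must phrase everything in terms of the local structure (orbits of $\varphi^H_t$ foliate each $\ct_a$, $H_i$ is a first integral of $\varphi^H_t$ too, so its Hamiltonian vectorfield is tangent to the leaves and commutes in the appropriate sense), deduce linearity in the good coordinates, and only then construct the flow as the pullback of the translation flow and verify a posteriori that its infinitesimal generator is $X_{H_i}$. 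Once the coordinate description is in hand this verification is a routine chain-rule check, so I expect the write-up to be short: the real content is packaging the already-proven Theorem \ref{ThmWKAMDc} so that it applies to each $H_i$ simultaneously.
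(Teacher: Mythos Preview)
Your proposal is correct and follows essentially the same route as the paper: define the candidate flow on each leaf as the pullback by $g_a$ of a translation flow $R_{t\gamma_i^a}$, then check that its generator is $X_{H_i}$. The paper does exactly this in three lines, setting $\gamma_i^a=Dg_a(q)\,\frac{\partial H_i}{\partial p}(q,\eta_a(q))$ and writing down $\varphi_t^{H_i}(q,\eta_a(q))=(g_a^{-1}\circ R_{t\gamma_i^a}\circ g_a(q),\,\eta_a(\cdots))$; the differentiation of $H_i(q,\eta_a(q))=\mathsf b_i(a)$ in $a$ that you outline is precisely the (unstated) reason $\gamma_i^a$ is independent of $q$, so your write-up makes explicit what the paper leaves implicit.

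One small simplification: your detour through the full-measure set $D(N)$ and density of good leaves, borrowed from Proposition~\ref{LmUniformLipLip}, is unnecessary here. In the $G$-$C^1$ setting the foliation map $(q,a)\mapsto(q,\eta_a(q))$ is $C^1$ everywhere, so $X_{H_i}$ is tangent to every leaf at every point and the pushforward computation holds pointwise without any measure-theoretic argument.
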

\begin{proof}
Note that if we define for every   $a\in U$ and every $i\in\{ 1, \dots, n\}$ $\gamma_i^a$ by $$\gamma_i^a=Dg_a(q)\frac{\partial H_i}{\partial p}(q, \eta_a (q)),$$ then the flow defined on $\T^n\times U$ by
$$\varphi_t^{H_i}(q, \eta_a(q))=(g_a^{-1}\circ R_{t\gamma_i^a}\circ g_a(q), \eta_a(g_a^{-1}\circ R_{t\gamma_i^a}\circ g_a(q)))$$
preserves the foliation in $\eta_a$, is continuous and $C^1$ along the $\ct_a$ and such that
$$\frac{\partial \varphi_t^{H_i}}{\partial t}_{|t=0}(x)=X_{H_i}(x).$$
\end{proof}
\begin{remk}
We don't affirm that the vectorfields $X_{H_i}$ are uniquely integrable.
\end{remk}

\subsection{ The symplectic homeomorphism}\label{SSHomeo}
With the notations used in the previous part, if we define
$$\phi_0(q,a)=(g_a^{-1}(q), \eta_a(g_a^{-1}(q))),$$
then $\phi_0$ satisfies all the conclusions of Theorem  \ref{TAnondeghomeo} except the fact that it is not symplectic.

Let $\cc: \T^n\times U\rightarrow\T^n\times \mathsf c(U)$ be the $C^1$ diffeomorphism that is defined by
$\cc (q, a)=(q, \mathsf c(a))$. Then we define the homeomorphism $\phi: \T^n\times \mathsf c(U)\rightarrow \cu$ by
$$\phi=\phi_0\circ \cc^{-1}.$$
Then  $\phi$ is a homeomorphism that is $C^1$ in the direction of $\T^n$ and such that:
\begin{itemize}
\item $\forall c\in \mathsf c(U)$, $\phi^{-1}\circ \varphi^H_t\circ \phi(\T^n\times \{ c\})=\T^n\times \{ c\}$;
\item $\forall c\in \mathsf c(U)$, $\phi^{-1}\circ \varphi^H_t\circ \phi_{|\T^n\times \{ c\}}=R_{t\rho\circ \mathsf c^{-1}(c)}$.
\end{itemize}
To see that $\phi$ is a homeomorphism,  we note that $\phi=f_0\circ \psi_0^{-1}$ is a composition of a $C^1$ diffeomorphism given by the foliation $$f_0:(q, c)\mapsto (q, c+\frac{\partial S}{\partial q}(q, \mathsf c^{-1}(c))),$$ and the inverse function of the conjugation preserving the  standard foliation (by the $\T^n\times \{ c\}$)  $$\psi_0:(q, c)\mapsto (g_{\mathsf c^{-1}(c)}(q), c).$$ Note that $\psi_0$ is $C^1$ in the $q$ direction but not $C^1$ in the $c$ direction. 

Observe that if  $S(q, a)$ is the function that we introduced in Proposition \ref{Pregfol}, the function $(q, c)\mapsto \cs(q,c)=S(q, \mathsf c^{-1}(c))$ is a generating function for $\phi$ in the sense that if $\phi(x, c)=(q, p)$, then we have
$$\begin{cases}
& p=c+\frac{\partial \cs }{\partial q}(q,c),\\
& x=q+\frac{\partial \cs}{\partial c}(q,c).\\
\end{cases}$$

Now we prove that $\phi$ can be approximated in $C^0$ topology by smooth symplectomorphisms $\phi_\epsilon$ as $\epsilon\to 0$. We introduce $$\mathcal S_\epsilon(q,c)=\cs(q,c)*\varphi_\epsilon(c)*\varphi_\epsilon(q)$$ where $\varphi_\epsilon$ is a $C^\infty$ approximating Dirac-$\delta$ function as $\epsilon\to 0$ and $*$ is the convolution. We get that $\mathcal S_\epsilon(q,c)$ is $C^\infty$ in both variables and is a well-defined function on $T^*\T^n$ ($\Z^n$ periodic in $q$). Moreover, when $\epsilon$ tends to $0$, $(\mathcal S_\epsilon)$ tends to $\cs$ in $C^1$ topology uniformly on compact subsets and $(\frac{\partial \mathcal S_\epsilon}{\partial q})$ tends to $\frac{\partial \cs}{\partial q}$ in $C^1$ topology uniformly on compact subsets.

Hence, if $V$ is relatively compact, for  $\epsilon$ small enough, $f_\epsilon (q, p) =(q, c+\frac{\partial \mathcal S_\epsilon}{\partial q} (q,c))$ is a smooth map that is $C^1$ close to the diffeomorphism that gives the initial foliation $(q, c)\mapsto (q, c+\frac{\partial \cs}{\partial q} (q,c))$ and thus is a smooth diffeomorphism when restricted to $V$. Observe too that the approximating foliations are smooth and $C^1$ converge to the initial one.

Let $W$ be a compact subset of $\T^n\times \R^n$. We now define $\psi_\epsilon: (q,c)\mapsto (q+\frac{\partial \mathcal S_\epsilon}{\partial c}(q, c), c)$.  Let us recall that for every $c$, the map $g_{\mathsf c^{-1}(c)}: q\mapsto q+\frac{\partial \cs}{\partial c}(q, c)$ is a $C^1$ diffeomorphism of $\T^n$. Moreover, the maps $q\mapsto q+ \frac{\partial \cs_\epsilon}{\partial c}(q, c)$ and $q\mapsto {\bf 1}+ \frac{\partial^2 \cs_\epsilon}{\partial q \partial c}(q, c)$ uniformly converge on $W$ to $q\mapsto q+ \frac{\partial \cs}{\partial c}(q, c)$ and $q\mapsto {\bf 1}+ \frac{\partial^2 \cs}{\partial q \partial c}(q, c)$ respectively. We deduce that the restriction of $\psi_\epsilon$ to $W$ is also a smooth diffeomorphism for $\epsilon$ small enough.

If $W$ is relatively compact, we can now define $\phi_\epsilon$ by $\phi_\epsilon= f_\epsilon\circ \psi_\epsilon^{-1}$ on $W$ for $\epsilon$ small enough. Then $\mathcal S_\epsilon(q,c)$ is a generating function of the smooth symplectic diffeomorphim $\phi_\epsilon(x,c)=(q,p)$ i.e.
$$\begin{cases}
& p=c+\frac{\partial \mathcal S_\epsilon}{\partial q}(q,c),\\
& x=q+\frac{\partial \mathcal S_\epsilon}{\partial c}(q,c).\\
\end{cases}$$
Hence $\phi_\epsilon$ is symplectic and $\phi_\epsilon$ tends to $\phi= f_0\circ \psi_0^{-1}$ in $C^0$ topology on every compact set when $\varepsilon$ tends to $0$.
 \qed
\subsection{The smooth approximation}\label{S5}
The goal of this section is to prove Corollaries \ref{C1} and \ref{C2}. We will use some notations from Section \ref{SSHomeo}.

The Hamilton-Jacobi equation $H(q,c+\frac{\partial \cs}{\partial q}(q,c))=A(c)$ can be rewritten as 
$$H(q,p)=A\circ \pi_2\circ f_0^{-1}(q,p)$$
where $f_0: (q, c)\mapsto (q, c+\frac{\partial \cs}{\partial q} (q,c))$ is the $C^1$ map that defines the foliation.

The corollaries then follow by approximating $A$ by a sequence of $C^\infty$ functions  $\{A_\epsilon=A*\varphi_\epsilon\}$ of only $c\in \R^n$ and approximating $f_0$ by the sequence of $C^\infty$ diffeomorphisms $\{f_\epsilon\}$  constructed in the proof of Theorems \ref{ThmGC1} and \ref{thciham}. We denote $$H_\epsilon(q,p)=A_\epsilon\circ \pi_2\circ f_\epsilon^{-1}(q,p).$$
By the construction of $A_\epsilon$, we get that $$\|A_\epsilon-A\|_{\mathcal K, C^1}\to 0$$ on every compact subset $\mathcal K$ and that every $A_\epsilon$ has positive definite Hessian.
In the proof of Theorems \ref{ThmGC1} and \ref{thciham}, we proved that on every compact subset $\mathcal K$, we have
$$\|f_\epsilon-f_0\|_{\mathcal K, C^1}\to 0.$$
This implies that
 $$\|H_\epsilon-H\|_{\mathcal K, C^1}\to 0.$$
 This proves the first and third bullet points in Corollaries \ref{C1} and \ref{C2}. The second bullet point follows from $\|f_\epsilon-f_0\|_{\mathcal K, C^1}\to 0$.

 \appendix
 
 \section{$C^0$ integrability and $C^0$ Lagrangian submanifolds}\label{AppA}

In this appendix, we study various notions of $C^0$ integrability and prove a result on the relations between two notions of $C^0$ integrability. 
\subsection{Different notions of $C^0$ integrability}
There are different notions of $C^0$ integrability existing in literature. We give a list here. 

The following definition of $C^0$ Lagrangian submanifold was first introduced in \cite{Her2} by Herman.
\begin{defi} [$C^0$ Lagrangian submanifold {\it \#1}]\label{DefLagHerman}
Suppose that a $n$-dimensional $C^0$ submanifold  of $T^*N$ is the  graph of a one-form $p(q)dq$. We say that this manifold is \emph{ $C^0$ Lagrangian in the sense}  \#1 if the one-form $p(q)\,dq$ is closed in the distribution sense.
\end{defi}
\begin{remks} \begin{enumerate}
\item It can be proved that $p(q)\,dq$ is closed in the distribution sense if and only if its integral along every closed homotopic to a point loop is zero (see Proposition \ref{Pcohom}).
\item Observe that it is proved in \cite{F} (Theorem 4.11.5) that every $C^0$-Lagrangian graph in the sense {\it\#}1 that is invariant by a Tonelli flow is in fact a {\em Lipschitz }Lagrangian graph. Hence it has a tangent space Lebesgue almost everywhere and this tangent space is Lagrangian.
\end{enumerate}
\end{remks}
\begin{proposition}\label{Pcohom} Suppose $N=\T^n$. If the graph $\mathcal T$ of $p(q)\,dq$ is $C^0$ Lagrangian in the sense  \#1, then there exists a unique $c\in \R^n$ and a unique $C^1$ function $u:\T^n\rightarrow \R$ such that $p=c+du$. \end{proposition}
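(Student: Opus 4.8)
The plan is to show that a closed (in the distribution sense) one-form $p(q)\,dq$ on $\T^n$ decomposes uniquely as $c+du$ with $c\in\R^n$ a constant form and $u\in C^1(\T^n)$. First I would recall the de~Rham / Hodge picture in the distributional setting: a distributional one-form $\omega=p(q)\,dq$ that is closed determines a cohomology class $[\omega]\in H^1(\T^n;\R)\cong\R^n$, whose components are the periods $\int_{\gamma_i}\omega$ along the standard generating loops $\gamma_1,\dots,\gamma_n$ of $\T^n$. (By Remark (1) after Definition~\ref{DefLagHerman}, closedness in the distribution sense is equivalent to vanishing of the integral along every null-homotopic loop, so these periods are well defined.) Let $c\in\R^n$ be the vector of these periods, viewed as the constant one-form $\sum c_i\,dq_i$. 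Then $\omega-c$ is a closed distributional one-form with all periods zero, hence exact: $\omega-c=du$ for some distribution $u$ on $\T^n$, determined up to an additive constant, so we may normalise $u(0)=0$ to get uniqueness.

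The remaining point is the \emph{regularity} claim: $u$ is actually $C^1$, equivalently $p=c+du$ is continuous, which is exactly the hypothesis since $\mathcal T$ is the graph of the continuous one-form $p(q)\,dq$. Concretely, $du=\omega-c$ has continuous coefficients $p(q)-c$ as a distribution; a distribution all of whose first-order distributional partials are (represented by) continuous functions is itself $C^1$ — this is the standard fact that $W^{1,\infty}_{\mathrm{loc}}$ with continuous weak derivatives gives $C^1$, or more elementarily one integrates the continuous ``derivative'' along coordinate segments to recover a genuine $C^1$ primitive and checks it agrees with $u$ as a distribution. So $u\in C^1(\T^n)$ and $p=c+du$ holds pointwise by continuity.

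For uniqueness: if $c+du=c'+du'$ with $c,c'\in\R^n$ and $u,u'\in C^1$, then $d(u-u')=c'-c$ is a constant one-form; integrating over the loop $\gamma_i$ gives $0=(c'-c)_i$ for each $i$, so $c=c'$, whence $d(u-u')=0$ and $u-u'$ is constant; the normalisation $u(0)=u'(0)$ (or just identifying $u$ up to a constant, as the statement permits by speaking of ``a unique $C^1$ function'') forces $u=u'$. I would phrase the final statement with the understanding that $c$ is genuinely unique and $u$ is unique once a normalisation is fixed.

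The main obstacle is purely the regularity step — making rigorous that ``distributionally closed with continuous primitive-data'' upgrades to $C^1$ — but this is routine and can be handled either by mollification (mollify $\omega$, solve for $u_\varepsilon$ smooth, and pass to the limit using that the mollified derivatives converge uniformly on $\T^n$ because $p-c$ is continuous on the compact $\T^n$) or by the elementary segment-integration argument above. Everything else is standard Hodge/de~Rham theory on $\T^n$, and the equivalence in Remark~(1) (cited as Proposition~\ref{Pcohom} itself, so to be proved alongside) reduces closedness to the vanishing of null-homotopic periods, which is what lets us define the period vector $c$ cleanly.
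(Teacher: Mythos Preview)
Your proposal is correct and follows essentially the same route as the paper: define $c$ via periods over generating loops, show $(p-c)\,dq$ is exact, and recover a $C^1$ primitive $u$. The paper carries this out more concretely --- it mollifies $p$ to a smooth closed form $p_\varepsilon$, uses Stokes to show path-independence of the line integrals (hence well-definedness of $c$), and then defines $u$ directly as $u_c(q)=\int_{\gamma_q}(p-c)\cdot d\gamma$ along any curve from $0$ to $q$ --- whereas you invoke distributional de~Rham theory as a black box to obtain a distributional primitive and then upgrade its regularity separately. Both are sound; the paper's version is slightly more self-contained, while yours separates the cohomological and regularity steps more cleanly. One small caveat: you cite Remark~(1) for the equivalence between distributional closedness and vanishing of null-homotopic periods, but that remark in turn points back to this very proposition, so you should supply that equivalence yourself (it follows immediately from the mollification argument you already sketch).
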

Then $c$ is called the cohomology class of $\mathcal T$.

\begin{proof}
We  define the cohomology class for the torus $\mathcal T$.

 Suppose $\mathcal T=\{ (q,p (q)),\ q\in \T^n\}$.
Then we pick cycles $\gamma_j\subset \T^n$ with homology class $e_j:=(0,\ldots,0,1,0,\ldots,0)\in H_1(\T^n,\Z)$ where $1$ is in the $j$-th entry. Then the cohomology class $c\in H^1(\T^n,\R)$ of $\mathcal T$ is a vector in $\R^n$ determined by the equation 
\begin{equation}\label{EqCohom}
\langle c,e_j\rangle=\int_{\gamma_j} p(q)\,dq.\end{equation}
To see that the cohomology class is well-defined, we pick another cycle $\eta_j$ homologous to $\gamma_j$, so the difference $\gamma_j-\eta_j$ of two cycles can be realized as the boundary of a piece of surface $S_j$.  We next smoothen $p(q)$ to $p_{\epsilon}(q)$ by convoluting each of the components of $p(q)=(p_{1},\ldots,p_{n})$ by the same $C^\infty$ approximating Dirac-$\delta$ function. The components of $p_{\epsilon}(q)$ satisfy $\partial_{q_j}p_{i,\epsilon}=\partial_{q_i}p_{j,\epsilon}$ since $pdq$ is closed in the distribution sense. This implies that $p_{\epsilon}dq$ is a smooth closed one-form. 

So we get 
\begin{equation*}
\begin{aligned}
\int_{\gamma_j-\eta_j} p(q)\,dq
=& \lim_{\epsilon\to 0}\int_{\partial S_j}p_{\epsilon}(q)\,dq= \lim_{\epsilon\to 0}\int_{S_j}d(p_{\epsilon}(q)\,dq)=0.\end{aligned}
\end{equation*}
Next, we get that $(p(q)-c)\,dq$ is exact. To see this, we pick any $C^\infty$ curve $\gamma_q:\ [0,1]\to \T^n$ with $\gamma_q(0)=0$ and $\gamma_q(1)=q$ and define $u_c(q)=\int_0^1(p(\gamma_q)-c)\cdot \dot\gamma_q\,dt$. This function $u_c$ is well-defined and $C^1$ by the closedness of $pdq$ and the definition of the cohomology class $c$. 
\end{proof}

The following definition of $C^0$ Lagrangian submanifold was introduced in \cite{HLS} motivated by the $C^0$ closedness of the symplectomorphism group of Gromov-Eliashberg. 

\begin{defi}[$C^0$ Lagrangian submanifold {\it\#}2]\label{DefLagHLS}
We say a $C^0$ submanifold of a symplectic manifold is \emph{ $C^0$ Lagrangian in the sense}  \#2, if it is   symplectically homeomorphic to a smooth Lagrangian submanifold via a symplectic homeomorphism $($see Definition \ref{DefSympeo}$)$. 
\end{defi}

It is proved in Proposition 26 of \cite{HLS} that a $C^0$ Lagrangian graph  in the sense  {\it\#}1 is necessary a $C^0$ Lagrangian submanifold in the sense  {\it\#}2. However, the other direction of implication for a submanifold that is a graph is not clear. 
 
Using the above two notions of $C^0$ Lagrangian submanifold, we get two notions of $C^0$ integrability. 
\begin{defi}[$C^0$ integrability {\it\#}1]\label{DefAABZ}
We say that a Tonelli Hamiltonian $H:\ T^*\T^n\to \R$ is \emph{ $C^0$ integrable in the sense \#1} if there exists a continuous foliation of $T^*\T^n$ by invariant and $C^0$ Lagrangian graphs in the sense \#1.
\end{defi}
This definition of $C^0$ integrability was given by Arcostanzo-Arnaud-Bolle-Zavidovique in \cite{AABZ}.  Similarly, we have the following. 
\begin{defi}[$C^0$ integrability {\it\#}2]\label{DefHLS}
We say a Tonelli Hamiltonian $H:\ T^*\T^n\to \R$ is \emph{ $C^0$ integrable in the sense {\it\#}2} if there exists a continuous foliation of $T^*\T^n$ by invariant and $C^0$ Lagrangian graphs in the sense \#2. 
\end{defi}
 
Our next notion of $C^0$ integrability is based on the following remarkable $C^0$ rigidity result of Poisson bracket discovered by Cardin-Viterbo in \cite{CV}: 
if $M$ is a symplectic manifold and $F_n,G_n,F,G$ are in $C^\infty(M)$, if $(F_n, G_n)$ $C^0$ converges to $(F, G)$ and 
$$\lim_{n\rightarrow\infty}\|\{F_n,G_n\}\|_{C^0}= 0,$$
then $\{F, G\}=0$.  

With this result, it is natural to define the Poisson commutativity for $C^0$ Hamiltonians ({ this definition with a different name was  introduced in  in \cite{CV},} see also Chapter 2.1 of \cite{PR}). 
\begin{defi}[$C^0$ Poisson commutativity]\label{DefPoisson0}
For two $C^0$ functions $F$ and $G$ on a symplectic manifold $M$, we say that $F$ and $G$ \emph{Poisson commute}, i.e. $\{F,G\}=0$, if there exist two sequences $\{F_n\},\{G_n\}$ with $F_n, G_n\in C^\infty(M)$ satisfying $(F_n,G_n)\xrightarrow{C^0}(F,G)$ and
$$\lim_{n\to \infty}\|\{F_n,G_n\}\|_{C^0}= 0.$$
\end{defi}
Next, we define the third version of $C^0$-integrability, which can be considered as a $C^0$ version of Arnol'd-Liouville integrability.
\begin{defi}[$C^0$-integrability {\it\#}3] \label{DefC0}Suppose that  $\mathcal U\subset T^*\T^n$ is open.  
We say that a $C^0$ Hamiltonian $H:\ T^*\T^n\to \R$ is \emph{$C^0$ integrable} in $\mathcal U$ in the sense \#3, if there is a sequence of $C^2$ $n$-uples $$\mathsf H_i(q,p):=(H_{1,i}(q,p),\ldots,H_{n,i}(q,p)):\ \mathcal U\to \R^n,$$ satisfying
\begin{enumerate}
\item[(1)] $($Poisson commutativity$)$
$\mathsf H_{i}\to \mathsf H:=(H_1,\ldots,H_n),$ in the $C^0$ norm as $i\to \infty$ and \\
$\|\{H_{j,i}, H_{k,i}\}\|_{C^0}\to 0$ for all $j,k$, as $i\to \infty$. We assume that $H_1=H$.
\item[(2)] $($Non-degeneracy 1$)$The matrix $M_i$ formed by $\partial_p H_{j,i},\ j=1,\ldots,n$ as rows is invertible and the inverse $M_i^{-1}$ is uniformly bounded on $\mathcal U$ and in $i$; 
\item[(3)] $($Non-degeneracy 2$)$  the limiting tuple $\mathsf H(q,.):=(H_{1},\ldots,H_{n}):\ \mathcal U\to\R^n,$ for each fixed $q\in \T^n$ as a function of $p$ is an homeomorphism onto its image and this image $U$ does not depend on $q$. 
\end{enumerate}
\end{defi}
\begin{remk}
\begin{enumerate}
\item
In item (1) of Definition \ref{DefC0}, the Poisson commutativity of the Hamiltonians $H_j,\ j=1,\ldots,n$ for $n>2$ is narrower than applying Definition \ref{DefPoisson0} directly to each pair. Indeed, by Definition \ref{DefPoisson0}, the sequence of $\{H_{i,n}\}$ defining $\{H_i,H_j\}=0$ might be different from that used to defining $\{H_i,H_k\}=0$ for $ j\neq k$. We stick to the narrower definition here for simplicity. It is not clear to us how to recover Theorem \ref{ThmMainC0} below using the broader definition.
\item Since we do not need to talk about flow invariant objects in Definition \ref{DefC0}, different from Definition \ref{DefAABZ} and \ref{DefHLS}, we assume the Hamiltonian to be $C^0$ only, instead of $C^2$ as in Definition \ref{DefAABZ} and \ref{DefHLS}.
\item We need the non-degeneracy 1 hypothesis  to prove Theorem \ref{ThmMainC0}. This hypothesis tells us that { the map $\mathsf H(q,.)^{-1}$ is uniformly Lipschitz with respect to the variable $p$.}  It can be slightly weakened to allow $\|M_i^{-1}\|^2_{C^0}$ to blow up to infinity slowly as $i\to\infty$ provided the following limit holds  \begin{equation}\label{EqWeaker}\max_{j,k}\|\{H_{j,i}, H_{k,i}\}\|_{C^0}\|M_i^{-1}\|^2_{C^0}\to 0,\ \mathrm{as\ }i\to \infty.\end{equation}
\item Observe that  the non-degeneracy 1 hypothesis implies that the function $\mathsf H$ defined in point (3) of Definition   \ref{DefC0} is a local bi-Lipschitz homeomorphism for each fixed $q$, which is a part of  the non-degeneracy 2 hypothesis.
\end{enumerate}
\end{remk}

In our $C^1$ Arnol'd-Liouville theorem, we rely crucially on the $C^1$ or Lipschitz assumption to conjugate the Dynamics on each Lagrangian torus $$\mathcal T_a:=\{(q,p)\in \mathcal U\ |\ (H_1,\ldots,H_n)=a\in \R^n\}$$ to a translation on $\T^n$ by $\rho(a)$, from both the side of Herman theory and the side of generating functions. When we drop the $C^1$ or Lipschitz assumption, we lose control of the Dynamics and we do not know how to get a conjugacy. However, we still have the following theorem on the topological structure of the phase space.  

\begin{theorem}\label{ThmMainC0}
Suppose $H$ is a $C^0$-integrable Hamiltonian defined on $\mathcal U$ in the sense \#3, then the  level sets $\mathcal T_a:=\{(q,p)\in \mathcal U\ |\ (H_1,\ldots,H_n)=a\}$ for $a\in U$ define   a $C^0$ foliation of $\mathcal U$ by Lagrangian graphs in the sense \#1 $($i.e. in the sense of distribution$)$.\\
\end{theorem}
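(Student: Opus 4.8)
The strategy is to pass from the approximating $C^2$ tuples $\mathsf H_i$ to the limit $\mathsf H$ and read off the geometry of the level sets. First I would fix $a\in U$ and analyze the level set $\mathcal T_a$. By Non-degeneracy 2, for each fixed $q$ the map $p\mapsto \mathsf H(q,p)$ is a homeomorphism onto $U$, so $\mathcal T_a=\{(q,p_a(q))\,:\,q\in\T^n\}$ is the graph of a single-valued map $p_a:\T^n\to\R^n$, where $p_a(q)$ is the unique solution of $\mathsf H(q,p)=a$. Continuity of $p_a$ in $q$ follows from continuity of $\mathsf H$ together with the uniform local bi-Lipschitz property coming from Non-degeneracy 1 (the last remark before the theorem): $\|p_a(q)-p_a(q')\|\le C\|\mathsf H(q,p_a(q'))-\mathsf H(q',p_a(q'))\|$ and the right side is small when $q$ is near $q'$. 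Likewise the family $(p_a)_{a\in U}$ depends continuously on $a$, so the $\mathcal T_a$ genuinely foliate $\mathcal U$ by continuous graphs; it remains to prove each such graph is Lagrangian in the sense \#1, i.e. that the one-form $p_a(q)\,dq$ is closed in the sense of distributions.

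\textbf{Closedness via the Cardin--Viterbo rigidity.} For this I would smooth: let $p_{a,\epsilon}=p_a * \varphi_\epsilon$ be the componentwise convolution with an approximate identity, so $p_{a,\epsilon}\to p_a$ in $C^0$ on compact sets. The form $p_{a,\epsilon}(q)\,dq$ is closed (in the usual sense) if and only if $\partial_{q_j}p_{a,\epsilon}^{(k)}=\partial_{q_k}p_{a,\epsilon}^{(j)}$, which is exactly the vanishing of the exterior derivative; passing $\epsilon\to0$ in the distributional pairing then gives that $p_a\,dq$ is distributionally closed. So it suffices to show $p_{a,\epsilon}\,dq$ is approximately closed, and this is where the Poisson-commutativity hypothesis enters. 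The key computation is the analogue, for the limiting graph, of the standard fact that commuting integrals have Lagrangian common level sets: differentiating the relations $H_{j,i}(q,p)=\text{const}$ along $\mathcal T_{a}$ for the approximating system, one obtains that the would-be tangent planes of $\{\mathsf H_i = a_i\}$ are ``$\{\cdot,\cdot\}$-isotropic up to an error of size $\max_{j,k}\|\{H_{j,i},H_{k,i}\}\|_{C^0}\cdot\|M_i^{-1}\|^2_{C^0}$''. Concretely, writing the approximate graph near a point as $p = P_i(q)$ with $P_i$ determined by solving $\mathsf H_i(q,p)=a$ (possible by Non-degeneracy 1), the symmetry defect of $DP_i$ is controlled by $\{H_{j,i},H_{k,i}\}$ times the bound on $M_i^{-1}$; the hypothesis (2), or in the weakened form \eqref{EqWeaker}, forces this defect to $0$. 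Since $P_i\to p_a$ uniformly on compacts (again by the uniform bi-Lipschitz bound, comparing the equations $\mathsf H_i(q,P_i(q))=a$ and $\mathsf H(q,p_a(q))=a$ and using $\mathsf H_i\to\mathsf H$ in $C^0$), one concludes in the distributional limit that $d(p_a\,dq)=0$.

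\textbf{Main obstacle.} The delicate point is the quantitative ``isotropy defect'' estimate: turning the bracket smallness $\|\{H_{j,i},H_{k,i}\}\|_{C^0}\to0$ into a statement about the antisymmetric part of $DP_i$. The natural route is to write, on the approximate graph, $\{H_j,H_k\}|_{\text{graph}} = \omega(X_{H_j},X_{H_k})$ and express $X_{H_j}$ in terms of $\partial_pH_j$ and $\partial_qH_j + \partial_pH_j\cdot DP_i$ (the restriction of $X_{H_j}$ to the graph); the invertibility of $M_i=(\partial_pH_{j,i})_j$ with uniformly bounded inverse then lets one solve for the matrix of partials and extract $DP_i - (DP_i)^T = M_i^{-1}\,(\text{antisymmetric form built from }\{H_{j,i},H_{k,i}\})\,(M_i^{-1})^T$, whose norm is $O(\max_{j,k}\|\{H_{j,i},H_{k,i}\}\|_{C^0}\|M_i^{-1}\|^2_{C^0})$. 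This identity, plus the uniform $C^0$ (and local bi-Lipschitz) convergence $P_i\to p_a$, is exactly what is needed; alternatively one can mimic the Cardin--Viterbo argument directly, convolving the $H_{j,i}$ in both variables to genuine smooth Hamiltonians, checking the resulting foliation is smooth Lagrangian up to the bracket error, and letting $\epsilon$ and then $i$ go to zero. Once closedness is established, Proposition \ref{Pcohom} supplies, for each $a$, a cohomology class $c(a)\in\R^n$ and a $C^1$ primitive $u_a$ with $p_a = c(a)+du_a$, completing the proof that the $\mathcal T_a$ form a $C^0$ foliation by Lagrangian graphs in the sense \#1.
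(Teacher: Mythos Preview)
Your proposal is correct and follows essentially the same route as the paper: solve $\mathsf H_i(q,p)=a$ for the $C^2$ graph $p=P_i(q)$ via the implicit function theorem and Non-degeneracy~1, differentiate to obtain the identity
\[
M_i\bigl(DP_i-(DP_i)^t\bigr)M_i^t=-\bigl(\{H_{j,i},H_{k,i}\}\bigr)_{j,k},
\]
use the uniform bound on $\|M_i^{-1}\|$ and $\|\{H_{j,i},H_{k,i}\}\|_{C^0}\to0$ to force $DP_i-(DP_i)^t\to0$ in $C^0$, and pass to the distributional limit using $P_i\to p_a$ uniformly. One remark: the mollification $p_{a,\epsilon}=p_a*\varphi_\epsilon$ that you introduce is a detour you never actually use (and cannot use, since controlling $\partial_j p_{a,\epsilon}^{(k)}-\partial_k p_{a,\epsilon}^{(j)}$ requires information you do not have about $p_a$ itself); the paper, like your ``main obstacle'' paragraph, works directly with the $C^2$ approximants $P_i$ and integrates the antisymmetric part of $DP_i$ against a test function.
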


\begin{proof}
Hypothesis of non-degeneracy  2  implies that every  level set $\mathcal T_a$  for $a\in U$ is a continuous graph and that the foliation $(\mathcal T_a)$ is continuous.

Hypothesis of non-degeneracy  1 and   the usual   implicit function theorem imply that 
if  $a\in U$, then for $i$ large enough the set
$$\mathcal T_{i,a}:=\{(q,p)\in \mathcal U\ |\ (H_{1,i},\ldots,H_{n,i})=a\}$$
is the graph of some $C^2$ function $p_i:\T^n\rightarrow U$.

The $C^0$ convergence of $\mathsf H_{i}$ to $\mathsf H$ implies that $p_i$ $C^0$-converges to $p$ if $\mathcal T_a$ is the graph of $p$.

We now think each vector $\frac{\partial H_{j,i}}{\partial q}$ and $ \frac{\partial H_{j,i}}{\partial p}$ as a row vector. We compute
$$\frac{\partial H_{j,i}}{\partial q}+ \frac{\partial H_{j,i}}{\partial p}\cdot \frac{\partial p_i}{\partial q}=0.$$
This gives
$$\frac{\partial H_{j,i}}{\partial q}\cdot \left(\frac{\partial H_{k,i}}{\partial p}\right)^t+ \frac{\partial H_{j,i}}{\partial p}\cdot \frac{\partial p_i}{\partial q}\cdot\left(\frac{\partial H_{k,i}}{\partial p}\right)^t=0.$$
Permuting the subscripts $k$ and $j$, we get
$$\frac{\partial H_{k,i}}{\partial q}\cdot \left(\frac{\partial H_{j,i}}{\partial p}\right)^t+ \frac{\partial H_{k,i}}{\partial p}\cdot \frac{\partial p_i}{\partial q}\cdot\left(\frac{\partial H_{j,i}}{\partial p}\right)^t=0.$$
Taking difference, this implies that 
$$\{H_{j,i},H_{k,i}\}+\frac{\partial H_{j,i}}{\partial p}\cdot \left(\frac{\partial p_i}{\partial q}-\left(\frac{\partial p_i}{\partial q}\right)^t\right)\cdot\left(\frac{\partial H_{k,i}}{\partial p}\right)^t=0.$$
We denote by $P_i$ the matrix $(\{H_{j,i},H_{k,i}\})_{j,k}$ and $M_i$ the matrix with $\frac{\partial H_{j,i}}{\partial p},\ j=1,\ldots,n$ as rows. 
We get the following abbreviation
$$M_i\left(\frac{\partial p_i}{\partial q}-\left(\frac{\partial p_i}{\partial q}\right)^t\right)M_i^t=-P_i.$$
By assumption $M_i$ is nondegenerate and $\|M_i^{-1}\|$ is uniformly bounded in $i$ and $ P_i\to 0$ in the $C^0$ topology. So we get in the $C^0$ topology
$$\left(\frac{\partial p_i}{\partial q}-\left(\frac{\partial p_i}{\partial q}\right)^t\right)=-M_i^{-1}P_i M_i^{-t}\to 0.$$
By integrating against a $C^\infty$ test function $\psi:\ \T^n\to \R$, we see that the limiting one-form $p(q)dq$ is closed in the distribution sense. The proof also goes through if we assume \eqref{EqWeaker} instead.

 \end{proof}
 
 \begin{remk}
 If $H$ is a Tonelli Hamiltonian, Theorem \ref{ThmMainC0} tells us that $C^0$ complete integrability in the sense {\it \#}3  implies $C^0$ complete integrability in the sense  {\it\#}1. Open questions remain that we list now.
 \end{remk}
 
 \begin{prob}
Is a Tonelli Hamiltonian that is $C^0$ completely integrable in the sense  \#1 or  \#2 $C^0$ completely integrable in the sense  \#3?
\end{prob}

 \begin{prob}
Is a graph that is $C^0$ Lagrangian in the sense 
  \#2 necessarily $C^0$ Lagrangian in the sense   \#1?
\end{prob}

\begin{prob} Does there exist any Tonelli Hamiltonian that is $C^0$ integrable $($in the sense  \#1 or  \#2$)$ but not $C^1$ integrable? not Lipschitz integrable?
\end{prob}

 When $H$ is Tonelli and $C^0$ integrable, a lot of questions concerning the Dynamics restricted to the leaves remain open. We will explain in next section what is known and what is unknown.

 \section{Smooth Hamiltonian That are $C^0$ completely integrable}\label{AppB}
 The Hamiltonian that we use in Definition \ref{DefC0} and Theorem \ref{ThmMainC0} is assumed to be $C^0$ only, thus there is a priori no Dynamics.  In this section, we discuss existing dynamical results if we assume more regularity for the Hamiltonian. 
 More precisely, we assume that  the Hamiltonian $H:\ T^*\T^n\to \R$ is Tonelli and $C^0$ integrable in the sense {\it\#}1.

 The following results are proved in \cite{Arna1},\cite{Arna4} and \cite{AABZ}, . 

 \begin{theorem}\label{Tnonconj}
For a Tonelli Hamiltonian $H:\ T^*\T^n\to \R$, the following assertions are equivalent
\begin{itemize}
\item    $H$ has no conjugate points;
\item $H$ is $C^0$ integrable in the sense \#1;
\item $H$ admits a continuous foliation into Lipschitz invariant Lagrangian graphs.
\end{itemize}
 \end{theorem}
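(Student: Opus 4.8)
The plan is to run the chain of implications (iii)$\,\Rightarrow\,$(ii), (ii)$\,\Rightarrow\,$(iii), (iii)$\,\Rightarrow\,$(i), (i)$\,\Rightarrow\,$(iii); the first two are a soft regularity upgrade, (iii)$\,\Rightarrow\,$(i) is a minimality argument, and (i)$\,\Rightarrow\,$(iii) is the substantial step, where the hypothesis $N=\T^n$ enters in an essential way. For the soft part: a Lipschitz Lagrangian graph is, by Rademacher's theorem together with the description recalled before Theorem \ref{ThLip}, the graph of $c+du$ with $c$ a closed smooth one-form and $u\in C^{1,1}$, so its defining one-form is closed in the distributional sense and the graph is $C^0$ Lagrangian in the sense \#1; hence (iii)$\,\Rightarrow\,$(ii) is immediate. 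Conversely, if $H$ is $C^0$ integrable in the sense \#1, each leaf is a $C^0$ Lagrangian graph invariant by the Tonelli flow $(\varphi_t^H)$, hence automatically Lipschitz by Fathi's theorem (Theorem 4.11.5 of \cite{F}), and the ambient foliation, being continuous, becomes a continuous foliation by Lipschitz invariant Lagrangian graphs, which is (ii)$\,\Rightarrow\,$(iii).

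\textbf{From the foliation to the absence of conjugate points: (iii)$\,\Rightarrow\,$(i).} Fix $x\in T^*\T^n$ and let $\ct$ be the leaf through $x$, the graph of $c+du$ with $u\in C^{1,1}$ for some $c\in H^1(\T^n,\R)$. By the weak KAM / Aubry--Mather theory of Tonelli Hamiltonians (see \cite{M}, \cite{F}), an invariant $C^{1,1}$ Lagrangian graph carries the energy $\alpha(c)$, $u$ is simultaneously a forward and a backward weak KAM solution for the class $c$, and every orbit lying on $\ct$ projects to a curve calibrated by $u$ on all of $\R$, hence globally action minimizing. A complete orbit that minimizes the action on every compact interval cannot contain a pair of conjugate points, since a conjugate point would, by the second variation, produce a strictly cheaper nearby competitor past it. As the leaves cover $T^*\T^n$, every orbit has this property, so $H$ has no conjugate points.

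\textbf{From the absence of conjugate points to the foliation: (i)$\,\Rightarrow\,$(iii).} This is where the results of \cite{Arna1}, \cite{Arna4} and \cite{AABZ} are used. Absence of conjugate points allows one to define at every $x\in T^*\T^n$ the two Green bundles
$$G_+(x)=\lim_{t\to+\infty}d\varphi_t^H\big(V(\varphi_{-t}^H(x))\big),\qquad G_-(x)=\lim_{t\to+\infty}d\varphi_{-t}^H\big(V(\varphi_{t}^H(x))\big),$$
where $V$ denotes the vertical subbundle; these limits exist and $G_\pm$ are continuous, flow-invariant, Lagrangian, transverse to $V$, and comparable. The crux is the \emph{Key claim} $G_-=G_+$ everywhere, equivalently that for every $c\in\R^n$ the projected Aubry set $\ca(c)$ is the whole torus $\T^n$. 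Granting it, for each $c$ the backward weak KAM solution $u_c$ of $H(q,c+du_c)=\alpha(c)$ is $C^{1,1}$ on all of $\T^n$ and unique up to an additive constant, so $\cl_c:=\{(q,c+du_c(q)):q\in\T^n\}$ is an invariant Lipschitz Lagrangian graph, equal to the Aubry set $\ca(c)$ lifted, with almost everywhere tangent bundle $G_-=G_+$. Superlinearity of $H$ forces $\|du_c\|_{C^0}$ to stay bounded on bounded sets of $c$ and $\cl_c$ to escape to infinity as $|c|\to\infty$, so $c\mapsto c+du_c(q_0)$ is a proper continuous map of $\R^n$; checking that it is injective — that is, that two distinct invariant Lipschitz Lagrangian graphs cannot cross, via the comparison principle for weak KAM solutions provided by the absence of conjugate points — yields that $\{\cl_c\}_{c\in\R^n}$ is a continuous foliation of $T^*\T^n$ by Lipschitz invariant Lagrangian graphs.

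\textbf{Main obstacle.} The delicate point is the Key claim $G_-=G_+$ on \emph{all} of $T^*\T^n$. The equality on the support of every invariant (in particular every minimizing) measure is classical, reflecting the vanishing of the transverse Lyapunov behaviour there; the work is to propagate it to every point. One argues that an orbit along which $G_-\neq G_+$ would be action minimizing on half-lines but not on all of $\R$, and that such a transition from the minimizing to the non-minimizing régime must take place at a conjugate point — excluded by hypothesis — unless the orbit is semi-static and hence confined to an Aubry set, where $G_-=G_+$ anyway. Making this dichotomy precise, together with the fact that on $\T^n$ every cohomology class is available and $\alpha$ is finite and superlinear, and then assembling the resulting barriers into a genuine (non-crossing, space-filling) foliation, is the technical heart of the argument, carried out in \cite{Arna1}, \cite{Arna4} and \cite{AABZ}.
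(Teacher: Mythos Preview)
The paper does not supply its own proof of this theorem: it is quoted in Appendix~\ref{AppB} from \cite{Arna1}, \cite{Arna4} and \cite{AABZ} with no argument given, so there is no in-paper proof to compare against.

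On the merits of your sketch: your treatment of (ii)$\Leftrightarrow$(iii) and of (iii)$\Rightarrow$(i) is correct and clean. The problematic part is your account of (i)$\Rightarrow$(iii). You organize it around the Green-bundle coincidence $G_-=G_+$ and assert that this is \emph{equivalent} to the projected Aubry set being all of $\T^n$ for every class $c$; that equivalence is neither standard nor argued in your sketch (one direction is plausible via Arnaud's results on Green bundles over minimizing measures, the other is not). More seriously, the reasoning in your ``Main obstacle'' paragraph does not hold up: you claim that an orbit with $G_-\neq G_+$ would minimize on half-lines but not on all of $\R$, and that the passage from minimizing to non-minimizing forces a conjugate point. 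Neither step is correct in general --- an orbit can fail to minimize globally without carrying any conjugate point (the obstruction is typically a distant short-cut in configuration space, not a collapse of Jacobi fields along the orbit itself), and there is no direct implication from $G_-\neq G_+$ to half-line-only minimization. Your closing line about a ``comparison principle for weak KAM solutions provided by the absence of conjugate points'' giving disjointness of the $\cl_c$ is also too vague to count as an argument.

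The proof in \cite{AABZ} runs along a different axis: one lifts to the universal cover $\R^n$ and shows directly that absence of conjugate points forces every Euler--Lagrange orbit to be a global action minimizer between any two of its points; from this one deduces, for each cohomology class $c$, uniqueness (up to constants) and $C^{1,1}$ regularity of the weak KAM solution, and then that the resulting invariant graphs are pairwise disjoint and fill $T^*\T^n$. The Green-bundle coincidence $G_-=G_+$ is a consequence of this picture, not its engine. Your write-up points at the right references but misrepresents the mechanism.
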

 
 In this case, a lot of leaves are in fact $C^1$.
 
 \begin{theorem}Suppose that $H:\ T^*N\to \R$ is Tonelli, where $N$ is a compact manifold, and is $C^0$ integrable in the sense  \#1. Then there exists a flow invariant $G_\delta$ subset $\mathcal G$ of $\ T^*N$ such that each leaf in $\mathcal G$ is $C^1$.
 \end{theorem}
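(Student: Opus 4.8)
The plan is to build $\mathcal G$ out of the two \emph{Green bundles} of a Tonelli flow without conjugate points, following \cite{Arna1, Arna4}. By Theorem \ref{Tnonconj} the hypothesis that $H$ is $C^0$ integrable in the sense \#1 is equivalent to $H$ having no conjugate points and to the existence of a continuous foliation of $T^*N$ into invariant \emph{Lipschitz} Lagrangian graphs $(\ct_a)$; fix one such foliation. Absence of conjugate points guarantees that, along every orbit, the limits
$$G^-(x)=\lim_{t\to+\infty}D\varphi^H_{t}\bigl(V(\varphi^H_{-t}x)\bigr),\qquad G^+(x)=\lim_{t\to+\infty}D\varphi^H_{-t}\bigl(V(\varphi^H_{t}x)\bigr)$$
exist in the Lagrangian Grassmannian bundle of $T^*N$, are transverse to the vertical subbundle $V$, and satisfy $G^-(x)\le G^+(x)$ for the usual order on Lagrangian planes transverse to $V$; moreover $D\varphi^H_sG^\pm(x)=G^\pm(\varphi^H_sx)$. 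Writing $G^-_t,G^+_t$ for the (continuous, and monotone in $t$) finite‑time approximants, one has $G^-=\sup_tG^-_t$ with $G^-_t$ increasing, and $G^+=\inf_tG^+_t$ with $G^+_t$ decreasing; in particular $G^-$ is lower semicontinuous and $G^+$ upper semicontinuous.

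First I would set $\mathcal G:=\{x\in T^*N:\ G^-(x)=G^+(x)\}$ and check its topological properties. Flow‑invariance is immediate from the covariance $D\varphi^H_sG^\pm=G^\pm\circ\varphi^H_s$. For the $G_\delta$ property, fix a metric $d$ on the Lagrangian Grassmannian bundle. Since $G^-_t(x)\uparrow G^-(x)$, $G^+_t(x)\downarrow G^+(x)$ and $G^-_t(x)\le G^+_s(x)$ for all $t,s$, expressing these planes in a local Darboux chart as graphs of symmetric matrices and comparing traces shows that $G^-(x)=G^+(x)$ if and only if $\inf_{t,s\ge0}d\bigl(G^-_t(x),G^+_s(x)\bigr)=0$, whence
$$\mathcal G=\bigcap_{n\ge1}\ \bigcup_{t,s\ge0}\ \Bigl\{x\in T^*N:\ d\bigl(G^-_t(x),G^+_s(x)\bigr)<\tfrac1n\Bigr\}.$$
For fixed $t,s$ the map $x\mapsto d\bigl(G^-_t(x),G^+_s(x)\bigr)$ is continuous, so each inner union is open and $\mathcal G$ is a countable intersection of open sets.

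Next I would show that any leaf $\ct_a$ entirely contained in $\mathcal G$ is $C^1$. The technical heart, which I would quote from \cite{Arna1, Arna4}, is that for a $C^0$ (hence Lipschitz) invariant Lagrangian \emph{graph} of a flow without conjugate points the contingent cone of $\ct_a$ at any point $x$ lies between $G^-(x)$ and $G^+(x)$: indeed $\ct_a$ is invariant and stays a graph with uniformly bounded fibres, so any contingent direction $v$ at $x$ has $D\varphi^H_tv$ transverse to $V$ for all $t\in\R$, which forces $\R v$ into the cone bounded by $G^-(x)$ and $G^+(x)$. On $\mathcal G$ these two Lagrangian planes coincide, so for $x\in\ct_a\subset\mathcal G$ the contingent cone of $\ct_a$ at $x$ reduces to the single plane $L(x):=G^-(x)=G^+(x)$, i.e. $\ct_a$ is differentiable at $x$ with $T_x\ct_a=L(x)$. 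Finally, on $\mathcal G$ the common value $L=G^-=G^+$ is continuous, being simultaneously lower semicontinuous (as $G^-$) and upper semicontinuous (as $G^+$) there; hence $\ct_a$ is a Lipschitz graph whose tangent space varies continuously, and is therefore a $C^1$ submanifold.

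The main obstacle is precisely that squeezing lemma for the contingent cone of a merely Lipschitz invariant Lagrangian graph in terms of the Green bundles; once it is granted, the remaining steps (existence and semicontinuity of $G^\pm$, the $G_\delta$ bookkeeping, and upgrading a continuous tangent field on a Lipschitz graph to $C^1$ regularity) are routine. For completeness one should add that $\mathcal G$ is non‑trivial — it contains, for instance, the support of any ergodic invariant measure all of whose Lyapunov exponents vanish — so the statement is not vacuous; we refer to \cite{Arna1, Arna4, AABZ} for such abundance results, which are however not needed for the conclusion above.
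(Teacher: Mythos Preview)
The paper does not supply its own proof of this theorem: it is simply quoted in Appendix~\ref{AppB} as a known result from \cite{Arna1}, \cite{Arna4} and \cite{AABZ}. Your Green--bundle construction is precisely the method developed in \cite{Arna1} and \cite{Arna4}, so there is nothing independent in the present paper to compare against; your sketch reproduces that argument faithfully (existence and semicontinuity of $G^\pm$, the $G_\delta$ description via the continuous finite-time approximants, the squeezing of the contingent cone of an invariant Lipschitz Lagrangian graph between $G^-$ and $G^+$, and the resulting $C^1$ regularity where $G^-=G^+$).

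One cosmetic point worth tightening: your set $\mathcal G=\{x:G^-(x)=G^+(x)\}$ is flow-invariant but is not \emph{a priori} a union of leaves of the foliation, so the phrase ``each leaf in $\mathcal G$'' could in principle be vacuous even when $\mathcal G$ is large. To match the statement literally, replace $\mathcal G$ by the union of those leaves entirely contained in it. This is still a $G_\delta$: writing $\mathcal G=\bigcap_n\mathcal G_n$ with each $\mathcal G_n$ open as in your argument, the set $\{a:\ct_a\subset\mathcal G_n\}$ is open by compactness of the leaves and continuity of the foliation, hence $\{a:\ct_a\subset\mathcal G\}$ is a $G_\delta$ in parameter space, and its preimage in $T^*N$ under the foliation homeomorphism is a $G_\delta$ as well. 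With that adjustment your argument is exactly the one the paper is citing.
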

 
 If  we ask a little more regularity of the Hamiltonian, it can be shown that the Dynamics is non-hyperbolic.
 
 \begin{theorem} Suppose that $H:\ T^*\T^n\to \R$ is a $C^3$ Tonelli Hamiltonian and  is $C^0$ integrable in the sense  \#1, then all the Lyapunov exponents of the Hamiltonian flow are zero, with respect to any invariant Borel probability measure.
 \end{theorem}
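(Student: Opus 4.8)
The plan is to show that a $C^3$ Tonelli Hamiltonian on $T^*\T^n$ that is $C^0$ integrable in the sense \#1 has zero Lyapunov exponents for every invariant Borel probability measure. By Theorem \ref{Tnonconj}, $C^0$-integrability in the sense \#1 is equivalent to the absence of conjugate points, so the first step is to reduce the statement to a statement about the variational equation (linearized flow) along orbits of a Tonelli Hamiltonian with no conjugate points. Fix an invariant ergodic Borel probability measure $\mu$ (it suffices to treat ergodic measures, by the ergodic decomposition). Along a $\mu$-generic orbit $t\mapsto\varphi^H_t(x)$ the linearized flow $D\varphi^H_t(x)$ acts on $T_x(T^*\T^n)\cong\R^{2n}$, a symplectic vector space, and we want to show every Lyapunov exponent vanishes.

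The key mechanism I would use is the existence, through each point, of at least one (indeed a whole Lagrangian subspace of) invariant Lagrangian graph coming from the foliation; plus the no-conjugate-points property, which guarantees that the ``vertical'' subspace $V_x=\{0\}\times\R^n$ never returns to a vertical subspace under the linearized flow for $t\neq 0$, equivalently the Jacobi/Green subbundles are well-defined. First I would recall the classical construction of the stable and unstable Green bundles $G^-(x)$ and $G^+(x)$: $G^\pm(x)=\lim_{t\to\mp\infty}D\varphi^H_{t}(\varphi^H_{-t}(x))\cdot V_{\varphi^H_{-t}(x)}$, which exist precisely because there are no conjugate points and $H$ is Tonelli. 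These are $n$-dimensional Lagrangian subbundles, measurable (in fact they have semi-continuity properties), invariant under the linearized flow, and transverse to the vertical. The tangent space $T_x\mathcal T_{a(x)}$ to the leaf through $x$ is an invariant Lagrangian subbundle squeezed between the Green bundles: $G^-(x)\subseteq T_x\mathcal T_{a(x)}\cap(\text{horizontal-ish cone})\subseteq G^+(x)$ in the appropriate cone ordering. The crucial dynamical input — the step I expect to be the main obstacle — is to show that along $\mu$-almost every orbit the two Green bundles actually coincide, $G^-(x)=G^+(x)$, and that on this common bundle (and hence, by symplecticity and Lagrangianness, on all of $\R^{2n}$) the linearized cocycle has subexponential growth.

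To carry this out I would invoke the results already cited in the paper's Appendix B lineage (Arnaud's work, \cite{Arna1},\cite{Arna4}): the equality $G^-=G^+$ $\mu$-almost everywhere is exactly the statement that the orbit has no ``hyperbolic splitting'' in the measurable sense, and it follows from a Birkhoff-type / integration argument. Concretely: if some Lyapunov exponent were nonzero, Oseledets' theorem would give a measurable $D\varphi^H_t$-invariant splitting into Oseledets subspaces; using that the flow is symplectic the nonzero exponents come in pairs $\pm\lambda$, and the sum of the positive ones would be positive. One then derives a contradiction by a curvature/area argument: the $C^3$ regularity lets one write the variational equation as a matrix Riccati equation for the ``slope'' $S(t)$ of the transported vertical plane, $\dot S = -\partial^2_{qq}H - \partial^2_{qp}H\,S - S\,\partial^2_{pq}H - S\,\partial^2_{pp}H\,S$; convexity ($\partial^2_{pp}H>0$) and no conjugate points give that the solution starting from the vertical stays bounded between the two Green slopes $S^-(x)\le S(t)\le S^+(x)$ for all $t$, hence $\int S^+-S^-$ along the orbit controls the exponential growth rate, and invariance of $\mu$ forces $\int_{T^*\T^n}\mathrm{tr}(S^+-S^-)\,d\mu$ to be preserved — but a strictly positive exponent would force this quantity to grow, a contradiction. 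Thus $S^+=S^-$ $\mu$-a.e., the linearized flow preserves a single Lagrangian bundle on which it is conjugate to its own inverse-transpose, all exponents on it vanish, and by symplectic duality all $2n$ exponents vanish.

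Finally I would assemble the pieces: reduce to ergodic $\mu$; apply Theorem \ref{Tnonconj} to pass from $C^0$-integrability \#1 to no conjugate points; construct the Green bundles using $C^3$ Tonelli regularity; prove $G^-=G^+$ $\mu$-a.e. via the Riccati/invariance argument above (citing \cite{Arna1},\cite{Arna4} for the technical estimates on Green bundles of Tonelli flows, which is where essentially all the work sits); conclude vanishing of the top Lyapunov exponent, hence by the symplectic pairing of the spectrum vanishing of all Lyapunov exponents, for every invariant Borel probability measure. The main obstacle, to reiterate, is the measurable-rigidity step $G^-=G^+$ almost everywhere; everything else is either a citation or a soft reduction.
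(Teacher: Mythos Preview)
The paper does not give a proof of this theorem: it is stated in Appendix~B as one of several results ``proved in \cite{Arna1}, \cite{Arna4} and \cite{AABZ}'', with no argument supplied beyond those citations. Your proposal is therefore not competing with a proof in the paper but with the arguments in those references, and your outline---pass from $C^0$-integrability \#1 to absence of conjugate points via Theorem~\ref{Tnonconj}, build the Green bundles $G^\pm$, and deduce vanishing of the exponents from the coincidence $G^-=G^+$ on the support of the measure---is indeed the strategy of \cite{Arna1}.

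One point deserves tightening. Your paragraph on the ``Riccati/invariance'' step is not quite the argument that actually works: the integral $\int \mathrm{tr}(S^+-S^-)\,d\mu$ is a fixed real number, so saying a positive exponent would ``force this quantity to grow'' is not meaningful as written. What \cite{Arna1} actually proves is an explicit formula expressing the sum of the nonnegative Lyapunov exponents of $\mu$ as an integral of $\mathrm{tr}\big((\partial^2_{pp}H)^{1/2}(S^+-S^-)(\partial^2_{pp}H)^{1/2}\big)$ against $\mu$; the vanishing of the exponents is then equivalent to $G^-=G^+$ $\mu$-a.e., and that coincidence is obtained not from a growth contradiction but from the fact that in the $C^0$-integrable situation every invariant measure is supported on an invariant Lipschitz Lagrangian graph (hence is Mather-minimizing), together with the Green-bundle rigidity results for minimizing measures developed in \cite{Arna1}. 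Since you already flag this as the step where ``essentially all the work sits'' and point to the same references, your proposal is consistent with the paper's treatment; just replace the heuristic ``growth'' sentence by a direct appeal to the exponent formula and the minimizing-measure argument in \cite{Arna1}.
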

 
 If $H$ is $C^\infty$ and is $C^0$ integrable in the sense  {\it\#}1, some K.A.M. theorems can be proved close to the completely periodic tori (see \cite{AABZ}) that have a lot a nice consequences that we give now.

  \begin{theorem} Suppose that $H:\ T^*\T^n\to \R$ is a $C^\infty$ Tonelli Hamiltonian  and  is $C^0$ integrable in the sense  \#1, then
  \begin{enumerate}
  \item there exists a dense subset of $T^*\T^n$ with Lebesgue positive measure that is foliated by smooth invariant Lagrangian graphs on which the Dynamics is conjugate to a Diophantine rotation;
  \item  there exists an dense subset of $T^*\T^n$  that is foliated by smooth invariant Lagrangian graphs on which the Dynamics is conjugate to a rational rotation;
 \item there exists a dense $G_\delta$ subset of $T^*\T^n$ that is foliated by invariant Lagrangian graphs on which the Dynamics is strictly ergodic.
  \end{enumerate}
   \end{theorem}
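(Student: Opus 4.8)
The plan is to deduce the three assertions from the K.A.M. analysis near completely periodic tori carried out in \cite{AABZ}, following its structure. First I would fix the setup: by Theorem~\ref{Tnonconj} and Proposition~\ref{Pcohom}, $C^0$ integrability in the sense \#1 provides a continuous foliation of $T^*\T^n$ by invariant \emph{Lipschitz} Lagrangian graphs, which I index by cohomology class, $\ct_c=\{(q,c+du_c(q)):q\in\T^n\}$ for $c\in\R^n$. Each $\ct_c$ is the Aubry set of cohomology $c$; since every invariant measure supported on $\ct_c$ is then $c$-minimizing, $\ct_c$ carries a well-defined rotation vector $\rho(c)=\nabla\alpha(c)$, $\alpha$ being Mather's $\alpha$--function. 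I would then record the first key input (established in \cite{AABZ}): $\alpha$ is $C^1$ and strictly convex here, so $\rho=\nabla\alpha$ is a homeomorphism of $\R^n$, whence $\{c:\rho(c)\in\Q^n\}=\rho^{-1}(\Q^n)$ is dense and consequently the union of the \emph{completely periodic} tori --- those $\ct_c$ on which the flow is periodic with a common period --- is dense in $T^*\T^n$.

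The second step is local: fix a completely periodic torus $\ct_{c_0}$, of period $\tau$. Using the Lagrangian neighbourhood theorem (\cite{Wei1}) I would pass to a symplectic normal form in which $\ct_{c_0}$ is the zero section of $T^*\T^n$, $H$ is strictly convex in the fibre (since $H$ is Tonelli), and the flow on the zero section is periodic of period $\tau$ --- a completely resonant, isochronous situation. I would then invoke the K.A.M. theorem of \cite{AABZ} for this situation, which produces, accumulating on $\ct_{c_0}$: a positive Lebesgue measure set of smooth invariant Lagrangian graphs with Diophantine rotation vectors on which the flow is smoothly conjugate to the corresponding linear flow; a dense family of smooth leaves with rational rotation vector, each carrying a flow conjugate to a rational linear flow; and a dense $G_\delta$ subfamily of nearby leaves that is strictly ergodic. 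Since for a Tonelli Hamiltonian a smooth invariant Lagrangian graph is action minimizing, these K.A.M. tori are identified with the leaves $\ct_c$ of the given foliation for the relevant parameters, so all the above are statements about the foliation itself.

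The third step is globalization. I would choose a countable dense family $(\ct_{c_k})_k$ of completely periodic tori together with relatively compact neighbourhoods $U_k$ of them whose union is dense in $T^*\T^n$, and take, over $k$, the union of the three local families above. For (1): each local Diophantine family has positive Lebesgue measure, so the union has positive measure, and it is dense because the $U_k$ are dense and the Diophantine leaves accumulate on each $\ct_{c_k}$; this gives a dense positive-measure set foliated by smooth graphs with Diophantine dynamics. For (2): the union of the local rational families is dense and consists of smooth leaves carrying rational linear dynamics. For (3): by the same Baire-category arguments of \cite{AABZ}, strict ergodicity of the restricted flow being a $G_\delta$ condition on the continuously-varying leaves, the union of the local strictly ergodic families rewrites as a countable intersection of dense open subsets of $T^*\T^n$, hence is a dense $G_\delta$.

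The hard part is the K.A.M. step near a completely periodic torus: such a torus is completely resonant, so Kolmogorov's non-degeneracy condition fails and no standard K.A.M. theorem applies; the twist provided by the Tonelli (strict convexity) hypothesis has to be exploited, after the normal-form reduction, to run a K.A.M. scheme with a degenerate non-degeneracy condition. A secondary delicate point is the identification of the K.A.M. tori with the leaves of the given foliation, which relies on the action-minimizing property of $C^1$ invariant Lagrangian graphs of Tonelli Hamiltonians. Both points are precisely what is done in \cite{AABZ}, so the proof consists in assembling their conclusions as above.
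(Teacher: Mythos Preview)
Your proposal is essentially correct and matches what the paper does: the paper does not give its own proof of this theorem but simply records it as a consequence of the K.A.M. analysis near completely periodic tori carried out in \cite{AABZ}, and your sketch accurately reconstructs the architecture of that argument (density of completely periodic tori via $\rho=\nabla\alpha$ being a homeomorphism, local K.A.M. near each such torus, and globalization by countable unions and Baire category).
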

 We recall that a dynamical system is {\em strictly ergodic} if it is uniquely ergodic and if the unique invariant Borel probability measure has full support.

The following problem was posed in \cite{AABZ} for the first case of $C^0$ integrability.
\begin{prob}\label{ProbAABZ}  Suppose $H$ is Tonelli and $C^0$ integrable  in the sense  \#1,  \#2 or  \#3. Can an invariant torus of the Hamiltonian flow carry two invariant measures with different rotation vectors?
\end{prob} \section{Smooth Hamiltonian that are Lipschitz completely integrable}\label{AppC}
 In this section, we assume further that $H$ is a G-Lipschitz completely integrable Hamiltonian.
 
We assume here more regularity, in particular in the transverse direction, than the $C^0$ integrabilities in Appendix A or B, but slightly less regularity than the $C^1$ integrability. 

We begin by proving Theorem \ref{ThLip} that we recall.

\begin{theorem} 
Suppose that  the Hamiltonian $H:\ T^*\T^n\to \R$ is Tonelli  and is $G$-Lipschitz completely integrable. Then restricted to each leaf, the  Hamiltonian flow has a unique well-defined rotation vector, and is bi-Lipschitz conjugate to a translation flow by the rotation vector. Moreover, all the leaves are in fact $C^1$.
\end{theorem}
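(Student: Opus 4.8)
The plan is to reduce everything to Theorem~\ref{Tminimaldynamics} together with the strict convexity that the Tonelli hypothesis provides. Since $H$ is $G$-Lipschitz completely integrable, Theorem~\ref{Tminimaldynamics}(1) already gives that $(F^a_t(x)-x)/t$ converges to $\rho(a)$ uniformly in $(a,x)$, and uniformity in $x$ forces every orbit --- hence every invariant Borel probability measure --- on a leaf $\ct_a$ to have rotation vector $\rho(a)$, which depends continuously on $a$; this is the ``unique well-defined rotation vector''. When $\rho(a)$ is completely irrational, Theorem~\ref{Tminimaldynamics}(2)--(3) produces a $K^n$-bi-Lipschitz homeomorphism $h_a$ of $\T^n$ with $h_a\circ f^a_t=R_{t\rho(a)}\circ h_a$, the constant $K^n$ being independent of $a$. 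So there are three remaining tasks: (i) show that the leaves carrying a completely irrational rotation vector are dense in the foliation; (ii) pass from these ``minimal'' leaves to an arbitrary leaf; (iii) prove the $C^1$ regularity of the leaves.

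For (i) I would run the Lipschitz analogues of Propositions~\ref{LmCohom} and \ref{PconvexAnondeg}, and this is where the Tonelli hypothesis is genuinely used. Write $\ct_a$ as the graph of $c_a+\nabla u_a$ with $u_a\in C^{1,1}$ (the characterization of Lipschitz Lagrangian graphs recalled above). Then $\mathsf c(a)=\int_{\T^n}\eta_a(q)\,dq$ is Lipschitz and injective --- if $\mathsf c(a)=\mathsf c(a')$ then $\eta_a-\eta_{a'}=\nabla w$ is exact, and at a critical point of $w$ one gets $\ct_a\cap\ct_{a'}\neq\emptyset$ --- hence a homeomorphism onto an open set $\mathsf c(U)\subset\R^n$ by invariance of domain; and invariance of the graph forces $q\mapsto H(q,c_a+\nabla u_a(q))$ to have a.e.\ vanishing gradient, so it is a constant $A(c_a)$, with $A$ strictly convex on $\mathsf c(U)$ by the exactness argument of Proposition~\ref{PconvexAnondeg}. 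The crucial point is that $A$ is in fact $C^1$, with $\nabla A=\rho\circ\mathsf c^{-1}=:\tilde\rho$: writing $L$ for the Legendre dual of $H$ and $v_c(q):=\partial_p H(q,c+\nabla u_c(q))$, the Fenchel inequality gives $A(c')\ge\langle c'+\nabla u_{c'}(q),v_c(q)\rangle-L(q,v_c(q))$ for all $c,c',q$; since on the leaf $L(q,v_c(q))=\langle c+\nabla u_c(q),v_c(q)\rangle-A(c)$, integrating against an invariant measure $\mu$ of $(f^c_t)$ and using $\int\nabla(u_{c'}-u_c)\cdot v_c\,d\mu=0$ and $\int v_c\,d\mu=\tilde\rho(c)$ yields $A(c')\ge A(c)+\langle c'-c,\tilde\rho(c)\rangle$, i.e.\ $\tilde\rho(c)\in\partial A(c)$; a convex function admitting a continuous selection of its subdifferential is $C^1$ with that selection as gradient. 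By strict convexity $\nabla A=\tilde\rho$ is then a homeomorphism onto its open image (Theorem~1.4.5 of \cite{F}, as in Proposition~\ref{PconvexAnondeg}), which therefore contains completely irrational vectors densely; this is (i).

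For (ii), given any leaf $\ct_a$ choose $a_n\to a$ with $\rho(a_n)$ completely irrational. The $h_{a_n}$ are $K^n$-bi-Lipschitz with $K$ independent of $n$, so Arzel\`a--Ascoli gives a subsequential uniform limit $h_a$, still $K^n$-Lipschitz and injective, hence a homeomorphism of $\T^n$ (invariance of domain, compactness, connectedness) with $K^n$-Lipschitz inverse; since $(q,a)\mapsto f^a_t(q)$ is continuous and $\rho(a_n)\to\rho(a)$, passing to the limit in $h_{a_n}\circ f^{a_n}_t=R_{t\rho(a_n)}\circ h_{a_n}$ gives $h_a\circ f^a_t=R_{t\rho(a)}\circ h_a$. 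For (iii), the $C^1$ regularity of the leaves comes out a posteriori from \eqref{EqUniformLip}: for a Tonelli Hamiltonian this bounded-distortion property forces the two Green bundles of the flow to coincide along each leaf, and since the almost-everywhere tangent space of the Lipschitz graph is squeezed between them it is continuous, so the leaf is $C^1$.

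The hard part will be (i), and within it the identification $\rho=\nabla A\circ\mathsf c$: in the $G$-$C^1$ case this is immediate from the generating-function computation of Theorem~\ref{ThmWKAMDc}, but when the foliation is only Lipschitz in the transverse variable that computation is unavailable, so one must obtain it through the Legendre-duality argument sketched above (or, equivalently, through weak KAM theory, in which $A$ is Mather's $\alpha$-function and each leaf is the Aubry set of its cohomology class). Everything else --- the rotation vector, the conjugacy on minimal leaves, the Arzel\`a--Ascoli limit, and the Green-bundle regularity of the leaves --- is routine once (i) is in hand.
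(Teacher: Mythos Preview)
Your proof is correct and matches the paper's in its overall architecture: Theorem~\ref{Tminimaldynamics} for the rotation vector and the conjugacy on completely irrational leaves, Arzel\`a--Ascoli to extend to all leaves, and the Green-bundle mechanism (Theorem~2 of \cite{Arna4}) for the $C^1$ regularity of each leaf. Steps (ii) and (iii) are essentially identical to the paper's.

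The genuine difference is in step~(i), the density of completely irrational leaves. The paper argues directly that $\rho$ is injective via Mather's theory of minimizing measures: if $\rho(a)=\rho(a')$ with $a\neq a'$, then any action-minimizing measure of that rotation vector is simultaneously $\mathsf c(a)$- and $\mathsf c(a')$-minimizing, hence supported in $\ct_a\cap\ct_{a'}=\emptyset$, a contradiction; invariance of domain then makes $\rho$ a homeomorphism. You instead construct the $A$-function in the Lipschitz setting, prove it is strictly convex (as in Proposition~\ref{PconvexAnondeg}), and then identify $\tilde\rho=\rho\circ\mathsf c^{-1}$ as a continuous selection of $\partial A$ through a Fenchel-duality computation, so that $A$ is $C^1$ with $\nabla A=\tilde\rho$; injectivity of $\rho$ then follows from strict convexity. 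Both arguments are at bottom the same piece of weak~KAM theory (each leaf is the Aubry set of its cohomology class, and $A$ is Mather's $\alpha$-function), but your route is longer and more self-contained, and it yields the additional identification $\rho=\nabla A\circ\mathsf c$ --- something the paper establishes only in the $C^1$ setting (Theorem~\ref{ThmWKAMDc}) and does not claim here. The paper's route is shorter because it outsources that content to \cite{M} and \cite{Masor}.
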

 
\begin{proof}
First, we deduce from propositions \ref{LmUniformLipLip}, \ref{LmLipConj} and \ref{thlipconj} that   restricted to each leaf $\ct_a$, the  Hamiltonian flow has a unique well-defined rotation vector $\rho(a)$, and if the rotation vector is completely irrational, then there exists a bi-Lipschitz conjugacy conjugating the flow on the leaf  $\ct_a$ to the rigid translation by $t\rho(a)$. Observe too that $\rho(a)$ continuously depends on $a$: this is a consequence of point(A) of Proposition \ref{LmLipConj}.

Let us prove that $\rho$ is injective. We will use some results due to J.~N.~Mather concerning the minimizing measures (see \cite{Masor} and \cite{M}). We assume that $a\not=a'$ satisfy $\rho(a)=\rho(a')$. Observe that any minimizing measure with cohomology class $\mathsf c(a)$ (resp. $\mathsf c(a')$, see Definition \ref{Defsfc} for the definition of $\mathsf c$) is supported in $\ct_a$ (resp $\ct_{a'}$). Moreover, every minimizing measure supported in $\ct_a$ (resp $\ct_{a'}$) has $\rho(a)$ (resp. $\rho(a')$) for rotation vector. We deduce that every minimizing measure with rotation vector $\rho(a)=\rho(a')$ is minimizing with cohomology class $\mathsf c(a)$ and $\mathsf c(a')$, and then its support in in $\ct_a\cap \ct_{a'}=\emptyset$. We obtain then a contradiction.

Hence $a\mapsto \rho(a)$ is a continuous and injective map. By the invariance of domain theorem (see for example \cite{GH}, Theorem 18.9, page 110), $\rho$ is a homeomorphism and then for a dense set $\ca$ of $a$, $\rho(a)$ is completely irrational. Hence every $a$ can be approximated by a sequence $(a_n)$ such that every $\rho(a_n)$ is completely irrational.  Then there exists a $K$ bi-Lipschitz conjugation $h_k:\T^n\rightarrow \T^n$ such that $f^{a_k}_t= h_k^{-1}\circ R_{t\rho(a_k)}\circ h_k$. 
 By Arzela-Ascoli, we can extract from $(h_k)$ a converging subsequence to some $h:\T^n\to\T^n$ that is $K$-bi-Lipschitz and such that 
 $$f^a_t=h^{-1}\circ R_{t\alpha}\circ h$$
 for some $\alpha$. Then  necessarily $\alpha=\rho(a)$ and we obtained the wanted bi-Lipschitz conjugation. To conclude that the leaves are $C^1$, we use the following result.
 
  \begin{theorem}[Theorem 2 of \cite{Arna4}] Suppose $H:\ T^*\T^n\to \R$ is Tonelli, and that $\mathcal G$ is a $C^0$ Lagrangian graph  in the sense  \#1 which is invariant under the Hamiltonian flow. Suppose the time-1 map of the Hamiltonian flow on $\mathcal G$ is bi-Lipschitz conjugate to a rotation, then the graph  $\mathcal G$ is $C^1$.
 \end{theorem}
  \end{proof}
 So in this case, Problem \ref{ProbAABZ} is answered negatively.

\section*{Acknowledgment}
\noindent M.-C. A. is supported by the Institut Universitaire de France and  by the ANR-12-BLAN-WKBHJ\\
J.X. is supported by the NSF grant DMS-1500897.
\bibliographystyle{amsplain}

\end{document}